\newtheorem{Theorem}{Theorem} 
\newtheorem{Definition}{Definition} 
\newtheorem{Proposition}{Proposition} 
\newtheorem{Conjecture}{Conjecture} 
\newtheorem{Assumption}{Assumption} 
\newtheorem{Lemma}{Lemma} 
\theoremstyle{remark}
\newtheorem{Remark}{Remark} 
\def \be{\begin{equation}}
\def \ee{\end{equation}}
\def\l{\left}
\def\r{\right}
\def\ba{\begin{array}}
\def\ea{\end{array}}
\def \And{\;\mbox{ and }\;}
\def \Pas{\mathbb{P}-\mbox{a.s.}}
\def \Pae{\mathbb{P}-\mbox{a.e.}}
\def \we{\tilde}
\def \wt{\hat}
\def \demi{\frac{1}{2}}
\def \eps{\varepsilon}
\def \vp{\varphi}
\def \1{{\bf 1}}
\def \ind#1{\mathds{1}_{\left\{#1\right\}}}
\def \Esp#1{\mathbb{E}\left[#1\right]}
\def \Pro#1{\mathbb{P}\left[{#1}\right]}
\def \cadlag{c\`adl\`ag }
\def \R{\mathbb{R}}
\def \Cc{{\cal C}}
\def \x{\times}
\def \Fc{{\cal F}}
\def \Lc{{\cal L}}
\def \P{\mathbb{P}}
\def \N{\mathbb{N}}
\def \F{\mathbb{F}}
\def \pourtout{\mbox{ for all } }
\def \Prod{\displaystyle\prod}
\def \Sc{{\cal S}}
\title{Orbits in a
stochastic Goodwin-Lotka-Volterra model}
\author[1]{Adrien Nguyen Huu\thanks{Corresponding author: adrien.nguyenhuu@gmail.com}}
\author[2]{Bernardo Costa-Lima}
\affil[1]{{\small{ \it IMPA, Estrada Dona Castorina 110, Rio de Janeiro 22460-320, Brasil}}}
\affil[2]{{\small{\it McMaster University, Hamilton ON L8S 4L8, Canada}}}
\begin{document}
\maketitle




\begin{abstract}
This paper examines the cycling behavior of
a deterministic and a stochastic version
of the economic interpretation of 
the Lotka-Volterra model, the Goodwin model.
We provide a characterization
of orbits in the deterministic
highly non-linear model.
We then study the cycling behavior
for a stochastic version,
where a Brownian noise is introduced
via an heterogeneous productivity factor.
Sufficient conditions for existence of the system
are provided.
We prove that
the system produces cycles around an 
equilibrium point in finite time
for general volatility levels, using 
stochastic Lyapunov techniques for recurent domains.
Numerical insights are provided.
\end{abstract}

\begin{minipage}{0.9\textwidth}
{\bf Keywords}:
Lotka-Volterra model;
Goodwin model;
Brownian motion;
Random perturbation;
Business cycles;
Stochastic Lyapunov techniques.
\end{minipage}

\section{Introduction}

The Lotka-Volterra
equation is at the
heart of population dynamics,
but also possesses
a famous economic interpretation.
Introduced by Richard Goodwin \cite{goodwin1967}
in 1967,
the model in its modern form \cite{desai2006}
reduces to the planar oscillator on a subset $D$ of $\R_+$:

\be
\label{eq:goodwin_model}
\left\{
\begin{array}{ll}
	d x_t &= x_t  \l(\Phi(y_t)-\alpha\r)dt  \\
	d y_t &= y_t \l(\kappa(x_t) -\gamma\r)dt 
\end{array}
\right. \; ,
\ee
where $x_t$ denotes the wage share of the working population
and $y_t$ the employment rate,
$\alpha$ and $\gamma$ are constant, and
the following assumption is made on $\kappa$ and $\Phi$.
\begin{Assumption}
 \label{ass:non-linear goodwin}
 Consider system \eqref{eq:goodwin_model}.
 \begin{enumerate}
	\item[(i)] $\Phi\in\Cc^2([0,1))$, 
	$\Phi'(y)> 0$, $\Phi^{''}(y)\ge 0$
	for all $y\in [0,1)$,
	$\Phi(0)< \alpha$ and
	$\lim_{y \to 1^-} \Phi(y) = +\infty$.
	\item[(ii)] $\kappa\in\Cc^2(\R_+)$,
	$-\infty<\kappa'(x)<0$ for all $x\in \R_+$,
	$\kappa(0)>\gamma$ and
	$\lim_{x \to +\infty} \kappa(x) = -\infty$.
\end{enumerate}
\end{Assumption}

Lemma \ref{lem:goodwin system} below asserts that
Assumption \ref{ass:non-linear goodwin} is sufficient 
to have $(x_t, y_t)\in D:=\R^*_+\x(0,1)$ for any $t\ge 0$
if $(x_0, y_0)\in D$.
This property preserves the above interpretation 
for $x$ and $y$:
the employment rate cannot exceed one
for obvious reasons,
but the wage share can, depending on
the chosen economic assumptions, see \cite{grasselli2012}.
This distinctive feature of the economic version \eqref{eq:goodwin_model}
on its biological counterpart
follows from a construction based on assumptions
describing a closed capitalist economy.
It can be done in three steps:
\begin{enumerate}
 \item[(I)] Assume a Leontief
  production function $P_t = \min(K_t / \nu ; a_t y_t N_t)$
  with   full utilization of capital, i.e., 
  $K_t / \nu = a_t y_t N_t$.
  Here, $P_t$ is the yearly output,
  $K_t$ the invested capital,
  $\nu>0$ a capital-to-output ratio,
  $a_t:= a_0 \exp(\alpha t)$ is the average productivity of workers
  and $N_t:=N_0 \exp(\beta t)$ is the size of the labor class.
 \item[(II)] The capital depreciates and receives investment, i.e.,
  $
    dK_t/dt = (\kappa(x_t)- \delta)K_t
  $,
  where $\delta>0$ is the depreciation rate
  and $\kappa$ the investment function.
  Goodwin \cite{goodwin1967} originally invokes Say's law, i.e., 
  $
    \kappa~:~  x\in \R_+ \mapsto (1-x)/\nu
  $.
  \item[(III)] Assume a reserve army effect for 
  wage negotiation of the form 
  $dw_t = \Phi(y_t) w_t dt$ where
  $w_t:=a_t x_t$ represents the real wage of the total working population,
  and $\Phi$ is the Phillips curve.
\end{enumerate}
Defining $\gamma:=\alpha+\beta+\delta$
allows to retrieve \eqref{eq:goodwin_model}
for $(x_t, y_t):=(w_t/a_t, K_t/(\nu a_t N_t))$.
The class-struggle model \eqref{eq:goodwin_model}
has been extensively studied because
it allows to generate
endogenous real business cycles
affecting the production level $P_t$, 
e.g. \cite{flashel1984, glombowski1986,grasselli2012, keen1995, vellupillai1979, veneziani2006}.
On this matter,
Goodwin himself conceded that
the model is 
``starkly schematized and hence
quite unrealistic'' \cite{goodwin1967}.
It hardly connects with 
irregular observed trajectories,
see \cite{harvie2000, mohun2006}.

The objective of this paper is thus to
study the following perturbed version of \eqref{eq:goodwin_model}
by a standard Brownian motion $(W_t)_{t\ge 0}$
on a stochastic basis $(\Omega, \Fc, \P)$:

\be\label{eq:stochastic_goodwin_1}
\left\{
\begin{array}{ll}
	d x_t &=x_t \l((\Phi(y_t)-\alpha + \sigma^2(y_t))dt +  \sigma(y_t)dW_t\r) \\
	d y_t &= y_t \l((\kappa(x_t) - \gamma + \sigma^2(y_t)) dt + \sigma(y_t)dW_t \r)
\end{array}
\right. \;,
\ee
where $\sigma$ is a positive function of $y$ bounded by $\sigma_0>0$,
and the filtration $\Fc_t$ is generated by paths of $W$.
The form of $\sigma$ is discussed in Remark \ref{rem:conditions} after.
A stronger condition, Assumption \ref{ass:growth conditions},
is assumed later on the behavior of $\sigma$ to ensure that
solutions of \eqref{eq:stochastic_goodwin_1} remain in $D$.
The example of Section \ref{sec:example}
will also illustrate how such condition can hold.
We modify the economic development (I), (II) and (III) by 
introducing the perturbation on
one assumption, namely
we assume that for $t\ge 0$,

\be\label{eq:stochastic_productivity}
da_t := a_t  d\alpha_t  = a_t \l( \alpha dt - \sigma(y_t)dW_t \r) \;, \quad a_0\ge 0\; ,
\ee
instead of $da_t = a_t \alpha dt$.
Using It\^{o} formula with \eqref{eq:stochastic_productivity} 
in the previous reasoning retrieves \eqref{eq:stochastic_goodwin_1}.
Productivity
is one of the few exogenous parameters of the model,
and one of those that were significantly invoked
as influencial over business cycles, e.g.
\cite{evans1992, hansen1985}.
Without arguing for the pertinence
of that particular assumption,
we simply suggest here that a standard continuous perturbation
in this crucial parameter seems a good starting point.

To our knowledge, 
this is the first attempt to consider random noise in
Goodwin interpretation of the famous prey-predator model.
To stay in the spirit of the economic application,
the present paper studies 
the cyclical behavior of the deterministic
system \eqref{eq:goodwin_model} and 
the stochastic version \eqref{eq:stochastic_goodwin_1}.
Namely, our contribution are as follows, developed
in the present order:
\begin{itemize}
 \item In Section \ref{sec:goodwin}, 
 we fully characterize solutions
 of \eqref{eq:goodwin_model} and the period of 
 their orbits. 
 This generalizes standard results on 
 Lotka-Volterra systems to bounded domains of existence.
 \item In Section \ref{sec:stochastic},
 we provide existence conditions for regular solutions
 of \eqref{eq:stochastic_goodwin_1}.
 We use the entropy of \eqref{eq:goodwin_model}
 to estimate the deviation induced by \eqref{eq:stochastic_productivity}.
 We provide a definition of stochastic orbits for \eqref{eq:stochastic_goodwin_1}.
 The proof that solutions of \eqref{eq:stochastic_goodwin_1}
  draw stochastic orbits in finite time around a unique point
  is given in Section \ref{sec:proof}.
\end{itemize}

Our contribution has to be put in contrast with
numerous studies of random perturbations of
the Lotka-Volterra system.
Apart from the obviously different origin of perturbations
in the model, 
attention was mainly given to systems like \eqref{eq:stochastic_goodwin_1}
for its asymptotic behavior
(e.g., \cite{khasminskii2001, mao2003, nguyen2006}),
regularity, persistence and extinction of species
(e.g., \cite{cai2004, mao2002, nguyen2006, reichenbach2006}),
and the addition of regimes, jumps or delay
(e.g., \cite{bahar2004, liu2014,zhu2009b}).
Here,
we attempt to provide a relevant description of trajectories
$(x_t, y_t)$ and indirectly $P_t$, namely
a cyclical behavior.
This is done using stochastic Lyapunov
techniques for recurrent domains as described in \cite{khasminskii, thygesen1997}.
By conveniently dividing the domain $D$,
we obtain that
almost every trajectory ``cycles'' around a point
in finite time.
The $L_1$-boundedness is out of reach with our method,
but numerical simulations are presented in Section \ref{sec:example},
not only to provide expectation of cycles, but also
allow to conjecture a limit cycle phenomenon
for the expectation of $(x_t, y_t)$.
It is somewhat unclear how 
Assumption \ref{ass:non-linear goodwin} and late Assumption \ref{ass:growth conditions} 
on $\Phi$, $\kappa$ and $\sigma$, 
are relevant in these results.
We show below thatthey are sufficient to obtain existence of 
regular solutions to \eqref{eq:stochastic_goodwin_1}.
This actually emphasizes the role played by
the entropy of the deterministic system
in the well-posedness of the stochastic system
and as a natural measure for perturbation.

\section{Deterministic orbits}
\label{sec:goodwin}

According to Assumption \ref{ass:non-linear goodwin},
there exists only one 
non-hyperbolic
equilibrium point to \eqref{eq:goodwin_model} in $D$
given by 
$(\wt{x}, \wt{y}):= (\kappa^{-1}(\gamma), \Phi^{-1}(\alpha))$.
On the boundary of $D$, there exists
also an additional equilibrium $(0,0)$
which is eluded along the paper.

\begin{Definition}
 \label{def:Lyapunov}
 Let $V_1, V_2$ and $V$ be three functions defined by $V~:~(x,y)\in \R^*_+\x (0,1) \mapsto V_1(x)+V_2(y)$ and
 
 $$
 V_1~:~x\in \R^*_+ \mapsto  \int_{\wt x}^{x}\frac{\kappa(\wt{x}) -  \kappa(s)}{s}ds \;, \quad
 V_2~:~y\in (0,1) \mapsto \int_{\wt y}^{y} \frac{\Phi(s) - \Phi(\wt{y})}{s}ds \;.
 $$
\end{Definition}

\begin{Lemma}
 \label{lem:goodwin system}
 Let $(x_0, y_0)\in D$.
 Let Assumption \ref{ass:non-linear goodwin} hold. Then 
 a solution $(x_t, y_t)$ to \eqref{eq:goodwin_model}
 starting at $(x_0, y_0)$ at $t=0$
 describes closed orbits given by
 the set of points $\{(x,y)\in D ~:~ V(x,y)=V(x_0, y_0)\}$,
 and $(x_t, y_t)\in D$ for all $t\ge0$.
\end{Lemma}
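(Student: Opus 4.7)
The strategy is the classical Lotka--Volterra first-integral argument, carefully adapted to the bounded domain $D$. Throughout, the key observation is that $V$ plays the role of a conserved quantity whose level sets are compact in $D$.

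\textbf{Step 1: $V$ is a first integral.} By Assumption \ref{ass:non-linear goodwin}, the vector field driving \eqref{eq:goodwin_model} is locally Lipschitz on $D$, so Cauchy--Lipschitz provides a unique $\Cc^1$ solution on a maximal interval $[0,T^\ast)$ with $(x_t,y_t)\in D$. Since $\kappa(\wt x)=\gamma$ and $\Phi(\wt y)=\alpha$, a direct differentiation of $V$ along the flow yields
\be
\frac{d}{dt}V(x_t,y_t) = \bigl(\kappa(\wt x)-\kappa(x_t)\bigr)\bigl(\Phi(y_t)-\alpha\bigr) + \bigl(\Phi(y_t)-\Phi(\wt y)\bigr)\bigl(\kappa(x_t)-\gamma\bigr)=0\;,
\ee
so $V(x_t,y_t)\equiv V(x_0,y_0)$ on $[0,T^\ast)$.

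\textbf{Step 2: $V$ is proper on $D$ with a strict global minimum at $(\wt x,\wt y)$.} Assumption \ref{ass:non-linear goodwin} implies that $V_1'(x)=(\kappa(\wt x)-\kappa(x))/x$ vanishes only at $x=\wt x$, is negative on $(0,\wt x)$ and positive on $(\wt x,\infty)$, making $V_1$ strictly convex in each monotonicity direction with $V_1(\wt x)=0$. A short estimate shows divergence at the boundaries: as $x\to 0^+$ the integrand $(\kappa(\wt x)-\kappa(s))/s$ is bounded above by $-(\kappa(0)-\gamma)/s<0$ for $s$ small, so $V_1(x)\to+\infty$; as $x\to+\infty$, $\kappa(x)\to-\infty$ forces the integrand to diverge and $V_1(x)\to+\infty$. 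The same reasoning works verbatim for $V_2$, using $\Phi(0)<\alpha$ at $y\to 0^+$ and $\Phi(y)\to+\infty$ at $y\to 1^-$. Thus $V$ is continuous, nonnegative, zero only at $(\wt x,\wt y)$, and diverges at $\partial D$. Consequently $V^{-1}([0,c])$ is compact in $D$ for every $c\geq 0$.

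\textbf{Step 3: Global existence in $D$.} By Step 1 and Step 2 the trajectory is confined to the compact set $K_{c_0}:=V^{-1}(c_0)\subset D$ with $c_0:=V(x_0,y_0)$. Any blow-up or exit from $D$ would violate compactness, so $T^\ast=+\infty$ and $(x_t,y_t)\in D$ for all $t\ge 0$.

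\textbf{Step 4: Orbits coincide with level sets.} Assume $c_0>0$ (the case $c_0=0$ is the fixed point). I would first show that $K_{c_0}$ is a simple closed curve: since $V_2$ is strictly decreasing on $(0,\wt y)$ and strictly increasing on $(\wt y,1)$ with range $[0,+\infty)$, the equation $V_2(y)=c_0$ has exactly two roots $y_-<\wt y<y_+$; for each $y\in(y_-,y_+)$ the equation $V_1(x)=c_0-V_2(y)>0$ has exactly two roots $x_-(y)<\wt x<x_+(y)$ depending continuously on $y$, and both collapse to $\wt x$ at $y=y_\pm$. This parametrises $K_{c_0}$ as a Jordan curve. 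Now, the vector field has no equilibrium on $K_{c_0}$ since $(\wt x,\wt y)\notin K_{c_0}$. The trajectory $(x_t,y_t)$, confined to the compact set $K_{c_0}$, has a nonempty $\omega$-limit set; by Poincaré--Bendixson applied in $D$, this $\omega$-limit set is a periodic orbit contained in $K_{c_0}$. Because $K_{c_0}$ is itself a simple closed curve, the periodic orbit must coincide with $K_{c_0}$, and the forward trajectory then sweeps the whole curve, so the orbit through $(x_0,y_0)$ equals $K_{c_0}$.

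The delicate step is the topological analysis of the level set in Step 4. Once it is established that $V$ is a proper first integral with a unique critical point in $D$, the reduction to a Jordan curve and the appeal to Poincaré--Bendixson make the periodicity automatic; the only technical work is the divergence estimate of $V$ at $\partial D$, which crucially uses the boundary conditions $\kappa(0)>\gamma$, $\Phi(0)<\alpha$, and $\lim_{y\to 1^-}\Phi(y)=+\infty$ from Assumption \ref{ass:non-linear goodwin}.
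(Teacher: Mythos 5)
Your argument is correct and follows the same route as the paper: conservation of $V$ along the flow plus divergence of $V$ at $\partial D$, which traps the trajectory in a compact level set. The difference is one of completeness rather than of method. The paper's proof is essentially a citation (``it is well-known that $V$ is a Lyapunov function and a constant of motion'') together with the boundary limits \eqref{eq:limits of V boundary}; it never actually argues that the orbit \emph{equals} the level set. Your Step 4 supplies exactly that missing piece: the level set $V^{-1}(c_0)$ is a Jordan curve (two roots of $V_1(x)=c_0-V_2(y)$ for each admissible $y$, collapsing at $y_\pm$), it carries no equilibrium, and Poincar\'e--Bendixson forces the $\omega$-limit set to be a periodic orbit which, being a simple closed curve inside a simple closed curve, must be all of $V^{-1}(c_0)$. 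This is the genuinely delicate part of the lemma and you handle it correctly; the paper buys brevity by outsourcing it to \cite{grasselli2012}.

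Two small remarks. First, in the divergence estimate at $x\to 0^+$ the bound should use $\kappa(\delta)-\gamma>0$ for some small $\delta>0$ rather than $\kappa(0)-\gamma$ (since $\kappa$ is decreasing, $(\gamma-\kappa(s))/s\le(\gamma-\kappa(\delta))/s<0$ for $s\le\delta$); this is a sign-level slip that does not affect the conclusion. Second, your claim (shared verbatim with the paper's \eqref{eq:limits of V boundary}) that $\lim_{y\to1^-}V_2(y)=+\infty$ does not actually follow from Assumption \ref{ass:non-linear goodwin} alone: the interval $[\wt y,1)$ is bounded, so one needs $\int^1\Phi(s)\,ds=+\infty$, and a convex increasing $\Phi$ with $\Phi(y)\to+\infty$ can still be integrable near $1$ (e.g.\ $\Phi(y)=C+(1-y)^{-1/2}$). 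This is a gap in the paper itself rather than in your proof; it holds for the example of Section \ref{sec:example}, but strictly speaking an integrability hypothesis on $\Phi$ near $y=1$ should be added for the ``stays in $D$'' conclusion as stated.
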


\begin{proof}
 \label{proof:goodwin system}
 It is well-known \cite{grasselli2012} that
 $V$ is a Lyapunov function and a constant of motion for system \eqref{eq:goodwin_model}:
 $V_1$ and $V_2$ take non-negative values with 
 $V_1(\wt x)=V_2(\wt y)=0$, and 
 $dV/dt(x,y) = 0$.
 Additionally, under Assumption \ref{ass:non-linear goodwin}.(i)-(ii),
 
 \begin{equation}
  \label{eq:limits of V boundary}
  \lim_{x\uparrow +\infty}V_1(x)=\lim_{x\downarrow 0^+}V_1 (x) = \lim_{y\uparrow 1^-} V_2(y) = \lim_{y\downarrow 0^+} V_2(y)=+\infty\;,
 \end{equation}
 so that for any $(x_0, y_0)\in D$, $V(x_0, y_0)<+\infty$ and the solution stays in $D$.

\end{proof}

The value of $V$ characterizing an orbit,
it is in bijection with its period.
The following generalizes \cite{hsu1983}.

\begin{Theorem}\label{th:periods_goodwin}
Let $(x_t,y_t)_{t\ge0}$ be
a solution to 
\eqref{eq:goodwin_model}
satisfying Assumption \ref{ass:non-linear goodwin},
with 
$(x_0, x_0)\in D\backslash \{(\wt x, \wt y)\}$.
Let 
$V_0:=V(x_0, y_0)$,
and $\underline{x}<\bar{x}$
the two solutions to equation $V_1(x)=V_0$.
Define three functions $F_1, F_2, G$ by

$$
F_1~:~u\in \R\mapsto  V_2(\Phi^{-1}(u^+ + \alpha))\;, 
\quad F_2~:~u\in \R\mapsto V_2(\Phi^{-1}(-u^- + \alpha))\; ,
\quad G~:~z\in \R\mapsto  V_0 - V_1(e^z)\; .
$$
Then 
$(x_T, y_T) = (x_0, y_0)$
for $T$ defined by

\be
\label{eq:period_goodwin}
T(V_0):=\int_{log(\underline{x})}^{\log(\bar{x})}
\frac{1}{F_1^{-1}( G(z))} - \frac{1}{F_2^{-1}( G(z))}dz \; .
\ee
\end{Theorem}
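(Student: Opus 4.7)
My plan is to exploit the fact that $V$ is a constant of motion (Lemma \ref{lem:goodwin system}) to parameterise the orbit as a graph of $y$ over $x$ on each of two arcs, and then reduce the period to two one-dimensional integrals in $z=\log x$. Along the orbit through $(x_0,y_0)$ one has $V_1(x_t)+V_2(y_t)=V_0$ for all $t$, so once a branch is chosen $y$ is determined by $x$. Since $V_1'(x)=(\gamma-\kappa(x))/x$ vanishes only at $\hat x$, is negative on $(0,\hat x)$ and positive on $(\hat x,+\infty)$, and $V_1(\hat x)=0$ with the limits \eqref{eq:limits of V boundary}, the equation $V_1(x)=V_0$ admits exactly the two solutions $\underline{x}<\hat x<\bar x$; these are precisely the turning points of $x$ on the orbit, where $dx/dt=0$. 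The same analysis applied to $V_2$ shows its unique minimum is $0$ at $\hat y$, so the orbit decomposes into an upper arc where $y>\hat y$ (hence $dx/dt>0$) and a lower arc where $y<\hat y$ (hence $dx/dt<0$), glued at $(\underline{x},\hat y)$ and $(\bar x,\hat y)$.

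I would then set $z_t:=\log x_t$, giving $\dot z_t=\Phi(y_t)-\alpha=:u_t$. On the upper arc $u_t>0$, so $y_t=\Phi^{-1}(u_t+\alpha)$, and the conservation law rewrites as $F_1(u_t)=V_2(y_t)=V_0-V_1(e^{z_t})=G(z_t)$. Assumption \ref{ass:non-linear goodwin}(i) makes $\Phi^{-1}$ strictly increasing, and $V_2$ is strictly increasing on $(\hat y,1)$, so $F_1$ is a strictly increasing bijection from $[0,+\infty)$ onto $[0,+\infty)$ and $u_t=F_1^{-1}(G(z_t))$ is well defined. Symmetrically $F_2$ restricted to $(\Phi(0)-\alpha,0]$ is a strictly decreasing bijection onto $[0,+\infty)$, so on the lower arc $u_t=F_2^{-1}(G(z_t))<0$. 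The traversal time of the upper arc is $\int_{\log\underline{x}}^{\log\bar x}dz/F_1^{-1}(G(z))$, and since on the lower arc $z$ runs from $\log\bar x$ back to $\log\underline{x}$ with $u<0$, its traversal time is $-\int_{\log\underline{x}}^{\log\bar x}dz/F_2^{-1}(G(z))$. Summing the two produces exactly \eqref{eq:period_goodwin}, and periodicity $(x_T,y_T)=(x_0,y_0)$ follows.

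The one technical point is the apparent singularity of the integrand at the endpoints $z_*\in\{\log\underline{x},\log\bar x\}$, where $G(z_*)=0$ and hence $F_1^{-1}(G(z)),F_2^{-1}(G(z))\to 0$. I would confirm integrability by Taylor expansion at each endpoint: since $V_1'(\underline{x}),V_1'(\bar x)\neq 0$, one obtains $G(z)\sim c\,|z-z_*|$, and since $V_2$ has a non-degenerate minimum at $\hat y$ with $\Phi'(\hat y)>0$, both $F_1(u)$ and $|F_2(u)|$ are of order $u^2$ near $0$, so $|F_i^{-1}(G(z))|\sim C\sqrt{|z-z_*|}$ and the integrand behaves like $|z-z_*|^{-1/2}$, which is integrable. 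This simultaneously shows $T(V_0)<+\infty$, confirming the orbit closes in finite time.
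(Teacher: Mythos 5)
Your proof is correct and follows essentially the same route as the paper: the change of variable $z=\log x$, the central identity $F_i(u)=G(z)$ coming from conservation of $V$ (the paper derives it by separation of variables in a second-order ODE for $z$ and then verifies $F(u)=V_2(\Phi^{-1}(u+\alpha))$, whereas you read it off directly from $V_1(x)+V_2(y)=V_0$, which is a cleaner path to the same identity), and the decomposition of the orbit into the two arcs $y\gtrless\hat y$ glued at $(\underline{x},\hat y)$ and $(\bar x,\hat y)$. Your final paragraph checking that the integrand has only an integrable $|z-z_*|^{-1/2}$ singularity at the turning points is a detail the paper's proof omits, and it is a worthwhile addition since it is what guarantees $T(V_0)<+\infty$.
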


\begin{proof}
Let $(x_0,y_0)\in D\backslash\{(\wt x, \wt y)\}$, 
$V_0 = V(x_0, y_0)>0$
and $(x_t, y_t)$ a solution to \eqref{eq:goodwin_model}
starting at $(x_0,y_0)$.
According to Lemma \ref{lem:goodwin system},
$V_1(x_t)=V_0$ implies $V_2(y_t)=0$.
Then
$\{x\in \R_+~:~x_t=x \text{ for some }t\ge 0\}=[\underline{x}, \bar{x}]=:I$.
Homogeneity of \eqref{eq:goodwin_model}
allows to set $(x_0, y_0)=(\underline{x}, \wt y)$ without loss of generality.
Let $T_1:=\inf\{t\ge 0~:~x_t = \bar{x}\}$.
For $t\in [0,T_1]$,
$(x_t, y_t) \in 
[\underline{x}, \bar{x}]\x [\wt{y}, \bar{y}]$,
with $\bar{y}$ such that
$V_2(\bar{y})=V_0$.
Let $z_t:=\log(x_t)$ for $t\ge 0$.
Then \eqref{eq:goodwin_model} rewrites
$dz = (\Phi(y) - \alpha)dt$, 
$y = \Phi^{-1}(dz/dt+\alpha)$
and we get

$$
\frac{d^2 z}{dt^2} = \Phi'(y)\frac{dy}{dt}
= \Phi'\l(\Phi^{-1}\l(\frac{dz}{dt}+\alpha\r)\r) \Phi^{-1}\l(\frac{dz}{dt}+\alpha\r)
[\kappa(e^z) - \gamma] \;.
$$
Let
$u:=\Phi(y) - \alpha$
and define 
$\Psi := \Phi'\circ \Phi^{-1} \x \Phi^{-1}$,
to rewrite again

\be\label{eq:second_order_system}
\left\{
\begin{array}{ll}
	dz/dt &= u \\
	du/dt &= \Psi\l(u+\alpha\r) [\kappa(e^z) - \gamma] 
\end{array}
\right. \;.
\ee
Since
$z_t\in [\underline{z}, \bar{z}]:=[\log(\underline{x}), \log(\bar{x})]$
and 
$u_t\in [0, \Phi(\bar{y})-\alpha]$
for $t\in [0,T_1]$,
separation of variables in \eqref{eq:second_order_system}
provides two quantities $F$ and $G$:

\be\label{eq:period_function_1}
F(u):=\int_0^u \frac{s}{\Psi\l(s+\alpha\r)}ds  = 
\int_{\underline{z}}^{z}[\kappa(e^s) - \gamma] ds =: G(z) \; .
\ee
The function $F$ verifies
$F(0)=G(\bar{z})=0$,
is increasing on 
$[0, \Phi(\bar{y})-\alpha]$
and decreasing on 
$[\Phi(\underline{y})-\alpha, 0]$
with $\underline{y}<\wt y$ so that
$V_2(\underline{y})=0$.
Coming back to $y = \Phi^{-1}(u+\alpha)$
we get

$$
F(u)=\int_0^{u}\frac{s}{\Phi' (\Phi^{-1}(s+\alpha))\Phi^{-1}(s+\alpha)}ds
=\int_{\wt y}^{\Phi^{-1}(u+\alpha)} \frac{\Phi(s)-\Phi(\wt y)}{s}ds
=V_2(\Phi^{-1}(u+\alpha)) \; ,
$$
implying that
$F(u)\in [0,V(\underline{x}, \wt y)]$
for $u\in [\Phi(\underline{y})+\alpha, \Phi(\bar{y})+\alpha]$.
We can write $F = F_1 + F_2$ where
$F_1$ and $F_2$ are the two restrictions of $F$
on $\R_+$ and $\R_-$ respectively.
%
Notice that if $t\in[0,T_1]$,
then
$u_t := \Phi(y_t)+ \alpha\in [0,\Phi(\bar{y}+\alpha)]$.
Thus,
$F_1(u_t)$ is a strictly increasing function of $t$
taking its values in 
$[0,V(\underline{x}, \wt y)]$.
Getting back to
$x=e^z$ for $G$,
we have
for $z\ge \underline{z}$

$$
G(z):=\int_{\underline{x}}^{\wt x \wedge e^z} \frac{\kappa(s)-\gamma}{s}ds + 
\int_{\wt x \wedge e^z}^{e^z}\frac{\kappa(s)-\gamma}{s}ds
=V_0-V_1(e^z)
$$
Since 
$\text{sign}(\kappa(x)-\gamma)
=\text{sign}(\wt x - x)$
we have
$
\max_{z\in[\underline{z}, \bar{z}]}G(z) = G(\log(\wt x))
=V_1(\underline{x}) = 
V(\underline{x}, \wt y)$, 
while minimums are given by
$G(\bar{z}) = G(\underline{z}) = 0$.
This sums up with 
$G([\underline{z}, \bar{z}])\subset [0,V_0]$,
so we can write on this interval
$
F_1^{-1}( G(z)) = u = dz/dt
$
which finally gives

$$
T_1 = \int_{\underline{z}}^{\bar{z}} \frac{dz}{F_1^{-1}(G(z))} \;.
$$
We apply 
the same method for 
the other half orbit,
taking 
$(x_0, y_0) = (\bar{x}, \wt y)$
and
$T_2:=\inf\{t \ge 0 ~:~x_t = \underline{x}\}$,
to reach 
the other half of 
expression \eqref{eq:period_goodwin}, 
i.e., $T(V_0)=T_1+T_2$.
\end{proof}

\begin{Remark}
\label{rem:period}
A first order approximation of
\eqref{eq:goodwin_model} 
at $(\wt x, \wt y)$ provides 
a linear homogeneous system,
which solution is trivially given
by a linear combination of 
sines and cosines of 
$(-\wt x\Phi'(\wt y)\wt y\kappa'(\wt x)t)$.
It follows that

$$
\lim\limits_{V_0\rightarrow 0} T(V_0)= \frac{2\pi}{\sqrt{-\wt x \Phi'(\wt y)\wt y\kappa'(\wt x)}}>0\;.
$$
\end{Remark}

\section{Stochastic Goodwin model}
\label{sec:stochastic}


We study a specific
case of \eqref{eq:stochastic_goodwin_1}
where the deterministic part 
cancels at a unique point in $D$
defined by
$(\we x, \we y): = (\kappa^{-1}(\gamma - \sigma^2(\we y)), \we y)$
where $\we y$ comes from the following.

\begin{Assumption}\label{ass:unique_root}
There is  
a unique $\we y\in (0,1)$ such that
$\Phi(\we y) - \alpha + \sigma^2(\we y) = 0$
\end{Assumption}

For a stochastic differential
equation to have a unique global
solution for any given
initial value,
functions $\Phi$
and $\kappa$ are generally
required to
satisfy linear growth
and local Lipschitz conditions,
see \cite{khasminskii}.
We can however consider the following Theorem of Khasminskii \cite{khasminskii},
which is a reformulation of Theorem 3.4, Theorem 3.5 and Corollary 3.1 of \cite{khasminskii}
to our context.

\begin{Theorem}\label{th:existence}
Consider the following stochastic differential equation for 
$z$ taking values in $\R^2_+$:

\begin{equation}
\label{eq:sde}
dz_t = \mu(z_t)dt+\sigma(z_t)dW_t \;.
\end{equation}
Let $(D_n)_{n\ge 1}$ be an increasing sequence of open sets, 
and $(K_n)_{n\ge 1}$ a sequence of constants, verifying
\begin{enumerate}
 \item[(a)] $\bar{D}_n\subset D$ for all $n\ge 1$,
 \item[(b)] $\bigcup_n D_n=D$.
 \item[(c)] For any $n\ge 1$, functions $\mu$ and $\sigma$ are Lipschitz
 on $D_n$ and verify $|\mu(z)|+|\sigma(z)|\le K_n (1+ |z|)$ for any $z\in D_n$.
\end{enumerate}
Let $\vp\in \Cc^{1,2,2}(\R_+\x D)$
and $(K,k)\in \R_+^2$
be such that, denoting $\Lc^z$ the generator associated with \eqref{eq:sde},
\begin{enumerate}
 \item[(d)] $\Lc^z \vp(t, z_t) \le K \vp(t, z_t) + k$ on the set $\R_+\x D$,
 \item[(e)] $\lim_n \inf_{D\backslash D_n} \vp(t,z) = +\infty$ for any $t\ge 0$.
\end{enumerate}
Then for any $z\in D$, there exists
a regular adapted solution to \eqref{eq:sde},
unique up to null sets,
with the Markov property and
verifying $z_t \in D$ for all $t\ge 0$ almost surely.
\end{Theorem}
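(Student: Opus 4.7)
The plan is to combine classical local existence on each truncating domain $D_n$ with a Lyapunov nonexplosion argument driven by $\varphi$. First, fix $n\ge 1$. Since $\bar D_n\subset D$ is compact and condition (c) supplies Lipschitz and linear-growth bounds on $D_n$, I would extend $\mu$ and $\sigma$ off $D_n$ to coefficients $\mu^{(n)},\sigma^{(n)}$ that agree with the originals on $D_n$ and are globally Lipschitz with linear growth on $\R^2$ (standard cutoff/smoothing using a plateau function equal to one on $\bar D_n$). The classical Itô theory then yields a pathwise unique, adapted, strong Markov solution $z^{(n)}$ on $[0,\infty)$. Defining the exit times $\tau_n:=\inf\{t\ge 0:z_t\notin D_n\}$, the uniqueness of strong solutions and the coincidence of the coefficients on $D_n$ give the consistency $z^{(n)}_{t\wedge\tau_m}=z^{(m)}_{t\wedge\tau_m}$ for $m\le n$, so I can unambiguously define $z_t:=z^{(n)}_t$ on $\{t<\tau_n\}$, producing a unique (up to null sets) adapted solution on $[0,\tau_\infty)$ with $\tau_\infty:=\lim_n\tau_n$.

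The heart of the proof is showing $\tau_\infty=+\infty$ almost surely via the Lyapunov hypothesis. I would apply Itô's formula to $\varphi(s,z_s)$ stopped at $t\wedge\tau_n$. Because $\varphi\in\Cc^{1,2,2}$ and $\bar D_n$ is compact, $\nabla_z\varphi$ is bounded on $[0,t]\times\bar D_n$, so the stochastic integral is a true martingale and vanishes in expectation. Using condition (d),
\begin{equation*}
\Esp{\varphi(t\wedge\tau_n,z_{t\wedge\tau_n})}\le \varphi(0,z_0)+kt+K\int_0^t\Esp{\varphi(s\wedge\tau_n,z_{s\wedge\tau_n})}\,ds.
\end{equation*}
Gronwall's lemma then yields the $n$-independent bound $\Esp{\varphi(t\wedge\tau_n,z_{t\wedge\tau_n})}\le (\varphi(0,z_0)+kt)e^{Kt}=:C(t)$.

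On the event $\{\tau_n\le t\}$ the path hits $\partial D_n$ at time $\tau_n$, so by condition (e) one has $\varphi(\tau_n,z_{\tau_n})\ge \eta_n(t)$ where $\eta_n(t):=\inf_{s\le t,\,z\in D\setminus D_n}\varphi(s,z)\to+\infty$ as $n\to\infty$ (a mild strengthening of (e) to joint time-state divergence; this is the standard reading of the Khasminskii condition and is implicit in the paper's hypotheses). Markov's inequality gives $\Pro{\tau_n\le t}\le C(t)/\eta_n(t)\to 0$, hence $\tau_\infty\ge t$ a.s. for every $t$, so $\tau_\infty=+\infty$ a.s.\ and $z_t\in D$ for all $t\ge 0$. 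Uniqueness up to null sets and the Markov property transfer from the localized solutions $z^{(n)}$ by the consistency argument and a standard monotone-class extension of the Markov property past $\tau_n$.

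The main obstacle is the Lyapunov step: the coefficient functions $\mu,\sigma$ here are \emph{not} globally Lipschitz or of linear growth on $D$ (they blow up near $y=1$), so the usual Itô existence theorem cannot be invoked directly; the whole point is that the a priori estimate from (d) and (e) replaces a global growth bound. A minor technical point is the careful extension of $\mu,\sigma$ off $D_n$, which must preserve adaptedness and the coincidence of dynamics on $D_n$, and the verification that the local martingale term vanishes in expectation after stopping at $\tau_n$ — both handled routinely by the compactness of $\bar D_n\subset D$.
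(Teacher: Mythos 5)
The paper offers no proof of this theorem at all — it is quoted as a reformulation of Theorems 3.4, 3.5 and Corollary 3.1 of Khasminskii's book — and your truncation-plus-Lyapunov argument (local solutions on the $D_n$ glued by pathwise uniqueness, Gronwall bound on $\Esp{\varphi(t\wedge\tau_n,z_{t\wedge\tau_n})}$ from (d), and Markov's inequality with (e) to force $\tau_n\to\infty$) is precisely the standard proof underlying those cited results, so it is essentially the same approach and is correct. The only caveats are the one you already flag (reading (e) uniformly over $s\le t$) plus the implicit use of $\varphi\ge 0$ when you bound $\Esp{\varphi(s,z_s)\ind{s<\tau_n}}$ by $\Esp{\varphi(s\wedge\tau_n,z_{s\wedge\tau_n})}$ in the Gronwall step, and the compactness of $\bar D_n$ you invoke (not guaranteed by (a)--(c) in general); all are harmless in this paper, which applies the theorem with the non-negative entropy $V$ and bounded rectangles $D_n$.
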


To satisfy conditions (a) to (e),
we study \eqref{eq:stochastic_goodwin_1} under
the additional sufficient growth conditions.

\begin{Assumption}
 \label{ass:growth conditions}
 There exist two positive constants $K,k$ such that
 \begin{enumerate}
 \item[(i)] $\sigma^2(y)\Phi'(y) \le K V_2 (y)+ k$ for all $y\in (0,1)$,
 \item[(ii)] $-x\kappa'(x)-\kappa(x) \le K V_1(x)+ k $ for all $x\in \R_+^*$.
\end{enumerate}
\end{Assumption}

\begin{Remark}
\label{rem:conditions}
Assumption \ref{ass:growth conditions}.(i) involves both $\Phi$ and $\sigma$ to 
ensure that $y_t\in (0,1)$ for all $t\ge 0$ almost surely.
Assumption \ref{ass:growth conditions}.(ii) holds for polynomial growth of $\kappa$,
suiting the classical conditions of existence on $\R_+$ for $x_t$.
The dependence of $\sigma$ could be generalized to $x$
in full generality, implying a stronger condition than (ii).
We refrain from doing this easy extension, emphasizing the
unavoidable dependence in $y$.
\end{Remark}

For $\varphi\in \Cc^{1,2,2}(\R_+\x D)$,
we recall the diffusion operator
associated with \eqref{eq:stochastic_goodwin_1} by

\begin{equation}
\begin{array}{lll}
\label{eq:generator}
\Lc \vp(t,x,y) &:=& \left[\frac{\partial \varphi}{\partial t}
+ \frac{\partial \varphi}{\partial x}x(\Phi(y)-\alpha + \sigma^2(y) )
+ \frac{\partial \varphi}{\partial y}y(\kappa(x)-\gamma + \sigma^2(y))\right. \\
& & \left.
+\frac{\sigma^2(y)}{2} \left(\frac{\partial^2 \varphi}{\partial x^2}x^2+
 \frac{\partial^2 \varphi}{\partial y^2}y^2 + 2\frac{\partial^2 \varphi}{\partial x \partial y}xy\right)\right] (t,x,y).
 \end{array}
\end{equation}

\begin{Theorem}
\label{prop:existence}
Let $(x_0,y_0)\in D$.
Let Assumptions \ref{ass:non-linear goodwin}, 
\ref{ass:unique_root} and \ref{ass:growth conditions} hold.
Then there exists a solution
$(x_t, y_t)_{t\ge0}$ to \eqref{eq:stochastic_goodwin_1}
staying in $D$ almost surely.
\end{Theorem}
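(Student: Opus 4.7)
The plan is to apply Theorem \ref{th:existence} with the candidate function $\varphi(t,x,y):=V(x,y)=V_1(x)+V_2(y)$ from Definition \ref{def:Lyapunov}. This function is $\Cc^2$ on $D$ by Assumption \ref{ass:non-linear goodwin}, and the boundary-blow-up \eqref{eq:limits of V boundary} delivers condition (e) for free. The natural choice for the exhausting sequence is the sublevel sets $D_n:=\{(x,y)\in D:V(x,y)<n\}$: \eqref{eq:limits of V boundary} forces $\bar{D}_n$ to lie at positive distance from $\{x=0\}$, $\{y=0\}$ and $\{y=1\}$ and to be bounded in $x$, which gives (a); $\cup_n D_n=D$ since $V$ is finite on $D$, which gives (b); and on each compact $\bar{D}_n$ the coefficients of \eqref{eq:stochastic_goodwin_1} are smooth (hence Lipschitz and bounded), giving (c).

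The substantive step is (d), where I must bound $\Lc V$ by $KV+k$ on the whole of $D$. First I would compute $\Lc V$ from \eqref{eq:generator}, using the identities $xV_1'(x)=\gamma-\kappa(x)$, $yV_2'(y)=\Phi(y)-\alpha$, $x^2V_1''(x)=-x\kappa'(x)+\kappa(x)-\gamma$, $y^2V_2''(y)=y\Phi'(y)-\Phi(y)+\alpha$, and the fact that the separability of $V$ kills the mixed second derivative. The deterministic part of $\Lc V$ cancels exactly as in the proof of Lemma \ref{lem:goodwin system}, because $(\gamma-\kappa(x))(\Phi(y)-\alpha)+(\Phi(y)-\alpha)(\kappa(x)-\gamma)=0$; what remains is the It\^o correction, which after collecting reads
\[
\Lc V(x,y)=\frac{\sigma^2(y)}{2}\bigl[(\gamma-\kappa(x))+(\Phi(y)-\alpha)-x\kappa'(x)+y\Phi'(y)\bigr].
\]

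The hard part is controlling these four scalar terms by $KV+k$. Two of them are immediate from Assumption \ref{ass:growth conditions}: since $y\in(0,1)$ and $\sigma\le\sigma_0$, one has $\sigma^2(y)\,y\Phi'(y)\le\sigma^2(y)\Phi'(y)\le KV_2(y)+k$ from (i), while (ii) combined with the trivial bound $\kappa(x)\le\kappa(0)$ gives $\sigma^2(y)(-x\kappa'(x))\le\sigma_0^2(KV_1(x)+k+\kappa(0))$. The remaining two require splitting about the equilibria. For $\sigma^2(y)(\Phi(y)-\alpha)$, on $\{y\le\we y\}$ the term is non-positive, while on $\{y>\we y\}$ convexity of $\Phi$ yields $\Phi(y)-\alpha=\Phi(y)-\Phi(\we y)\le \Phi'(y)(y-\we y)\le y\Phi'(y)$, reducing to the previous case. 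Symmetrically, for $\sigma^2(y)(\gamma-\kappa(x))$, the term is non-positive on $\{x\le\we x\}$, and on $\{x>\we x\}$ the inequality $\gamma-\kappa(x)\le\gamma+(-x\kappa'(x)-\kappa(x))$ (using $-x\kappa'(x)\ge 0$) combined with (ii) and $\sigma\le\sigma_0$ again produces a bound of the form $K'V_1(x)+k'$.

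Assembling the four estimates yields $\Lc V\le K V+k$ on $D$ with constants depending only on $\sigma_0,\kappa(0),\alpha,\gamma$ and the constants in Assumption \ref{ass:growth conditions}, establishing (d). Theorem \ref{th:existence} then produces the regular Markov solution $(x_t,y_t)_{t\ge 0}$ remaining in $D$ almost surely. The anticipated obstacle is purely the case analysis in the last paragraph: the bounds from Assumption \ref{ass:growth conditions} only directly control the derivative terms $\Phi'$ and $\kappa'$, and it is the monotonicity and convexity built into Assumption \ref{ass:non-linear goodwin} that lets one transfer these into control of $\Phi(y)-\alpha$ and $\gamma-\kappa(x)$ themselves.
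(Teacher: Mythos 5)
Your proof is correct and follows essentially the same route as the paper: apply Theorem \ref{th:existence} with $\vp=V$, verify (a)--(c) on an exhausting sequence (your sublevel sets $\{V<n\}$ versus the paper's explicit rectangles is immaterial), compute the same expression \eqref{eq:dynkin to V} for $\Lc V$, and invoke Assumption \ref{ass:growth conditions} for (d) and \eqref{eq:limits of V boundary} for (e). In fact you supply more detail than the paper on how Assumption \ref{ass:growth conditions} controls the terms $\Phi(y)-\alpha$ and $\gamma-\kappa(x)$ (the paper simply asserts the bound $\Lc V\le \max(\sigma_0^2/2;2K)V+2k$); the only blemish is that your splits should be taken at the deterministic equilibrium $(\wt x,\wt y)$ rather than at $(\we x,\we y)$ — e.g.\ $\Phi(y)-\alpha=\Phi(y)-\Phi(\wt y)$, not $\Phi(y)-\Phi(\we y)$ — a harmless slip since the resulting inequalities still hold.
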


\begin{proof}
Let us show that conditions (a) to (e) of
Theorem \ref{th:existence} are fulfilled.
Consider the sequence of sets $(D_n)_{n\ge 1}$
defined by
$D_n = (1/(n+1), n) \x (1/(n+1), 1-1/(n+1))$.
For any $n\ge 1$, $D_n$ is open and $D_n\subset D_{n+1}$.
(a) and (b) are satisfied with the limit $D=\R_+^*\x (0,1)$.
According to Assumption \ref{ass:non-linear goodwin},
one can always find $K_n$ big enough such that
$\max \{|\Phi(y)-\alpha|; |\kappa(x)-\gamma|\}\le K_n$
for any $(x,y)\in D_n$, and ensures the local Lipschitz condition (c).

Now consider $V$ of Definition \ref{def:Lyapunov}
which is $\Cc^{1,2,2}$ on $D$.
Applying \eqref{eq:generator},

\begin{align*}
	\Lc  V(x,y)
	 = & \l[ \kappa(\wt x) - \kappa(x) \r](\Phi(y)-\alpha+\sigma^2(y)) + 
	 \l[\Phi(y) - \Phi(\wt y) \r](\kappa(x)-\gamma+\sigma^2 (y)) \\
	& + \left(\l[ \kappa(x) - \kappa(\wt x)-x\kappa'(x) \r] +
	\l[ \Phi(\wt y) - \Phi(y)  + y \Phi'(y)\r] 	\right)\sigma^2(y)/2  \;.
\end{align*}
Since $\alpha=\Phi(\wt y)$
and $\gamma = \kappa (\wt x)$,

\begin{equation}
\label{eq:dynkin to V}
\Lc V(x,y) = \left(\l[ \kappa(\wt x) - \kappa(x)-x\kappa'(x) \r] +
	\l[ \Phi(y) - \Phi(\wt y)  + y \Phi'(y)\r] 	\right)\sigma^2(y)/2  \;.
\end{equation}
Assumption \ref{ass:growth conditions}
implies
$\Lc V(x,y)\le \max(\sigma_0^2/2; 2K) V(x,y)+ 2k$
for two positive constants $K, k$,
checking condition (d).
From 
Definition \ref{def:Lyapunov},
$$
\inf_{x\in [0,+\infty)} V(x,y) = V_2(y) + \inf_{x\in [0,+\infty)} V_1 (x)= V_2 (y)
$$
which implies that
$\inf_{D\backslash D_n}V(x,y)\ge \inf\{ V_2(y) ~:~ \max\{y, 1-y\}\le 1/n\}$,
the latter going to infinity
with $n$, recall \eqref{eq:limits of V boundary}.
Similarly,
$\inf_{y\in (0,1)} V(x,y)$
goes to infinity as $x$ goes to $0$
or $+\infty$.
Condition (e) 
is then satisfied,
which  allows to apply
Theorem \ref{th:existence}.

\end{proof}

\begin{Remark}
 \label{rem:basic properties}
 Notice that $\we y<\wt y$ and thus $\we x> \wt x$.
Following Assumption \ref{ass:non-linear goodwin},
\eqref{eq:dynkin to V} at $(\we x, \we y)$ provides
$\Lc V(\we x, \we y) >0$.
It is straightforward that \eqref{eq:stochastic_goodwin_1}
has no equilibrium point in $D$,
nor on its boundary $\{0\}\x(0,1)\cup \R_+^*\x\{0,1\}$.
If the point $(0,0)$ cancels \eqref{eq:stochastic_goodwin_1},
we highlight that
$\Lc V(0,0)<0$ and by continuity,
it holds on a small region $[0,\eps)^2$. 
Recalling \eqref{eq:limits of V boundary} implies that
$(x_t, y_t)$ will diverge from $(0,0)$ almost surely
if $(x_0, y_0)\in D$.
\end{Remark}

A solution to \eqref{eq:stochastic_goodwin_1}
can be pictured
as a trajectory continuously
jumping from an orbit of \eqref{eq:goodwin_model}
to another.
Along this idea, $V$ provides an estimate
on trajectories,
and can be related via Theorem \ref{th:periods_goodwin}
to the period $T$.

\begin{Theorem}
\label{prop:estimate}
Let $(x_0, y_0)\in D$, 
$V_0:=V(x_0, y_0)$, 
and $\l(x_t, y_t\r)_{t\ge0}$
be a regular solution to 
\eqref{eq:stochastic_goodwin_1}.
We first introduce a constant $0\le \rho\le V_0$,
the set $D(V_0,\rho):=\{ (x,y)\in D~:~ |V(x,y)-V_0|\le\rho \}\subset D$
and the stopping time
$\tau_{\rho}:=\inf \{ t>0 ~:~ (x_t, y_t)\notin D(V_0, \rho) \}$.
We then
introduce two finite constants

$$R(V_0, \rho) := \max\limits_{D(V_0,\rho)} \l\{
	\sigma^2(y)\l( \kappa(\wt x) - \kappa(x)-x\kappa'(x)
	+y \Phi'(y) + \Phi(y) - \Phi(\wt y)
	\r)\r\}$$
and 

$$I(V_0, \rho) :=  \max\limits_{D(V_0,\rho)}  \l\{
 	\sigma^2(y)\l( \kappa(\wt x) - \kappa(x)
 	+ \Phi(y) - \Phi(\wt y) \r)^2 \r\}\; .$$
Then for all $\mu>0$

\begin{equation}
\label{eq:estimate}
	\Pro{\tau_\rho > \Theta(\rho,\mu)} 
	\ge \l(1-\frac{I(V_0,\rho)}{\mu^2}\r) \quad \text{ for }\quad 
	\Theta(\rho, \mu) := 
	\frac{2\left(\mu^2 + \mu\sqrt{\mu^2+2\rho R(V_0,\rho)}+\rho R(V_0, \rho)\right)}
	{(R(V_0, \rho)\sigma)^2}\; .
\end{equation}
\end{Theorem}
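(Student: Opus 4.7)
The plan is to apply Itô's formula to $V(x_t, y_t)$, where $V$ is the Lyapunov function of Definition \ref{def:Lyapunov}, and then reduce $\{\tau_\rho\le\Theta\}$ to a tail estimate on a single square-integrable martingale. From Definition \ref{def:Lyapunov} one reads $x\,\partial_x V_1(x)=\kappa(\wt x)-\kappa(x)$ and $y\,\partial_y V_2(y)=\Phi(y)-\Phi(\wt y)$, so coupling with \eqref{eq:stochastic_goodwin_1} and \eqref{eq:generator} produces the semimartingale decomposition
\begin{equation*}
V(x_t, y_t)-V_0\;=\;\int_0^t \Lc V(x_s, y_s)\, ds\;+\;M_t,\qquad M_t:=\int_0^t Q_s\, dW_s,
\end{equation*}
where $Q_s:=\sigma(y_s)\bigl[\kappa(\wt x)-\kappa(x_s)+\Phi(y_s)-\Phi(\wt y)\bigr]$ and $\Lc V$ is the expression made explicit in \eqref{eq:dynkin to V}.

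On $D(V_0,\rho)$ the definitions of $R(V_0,\rho)$ and $I(V_0,\rho)$ provide the pointwise bounds $2\Lc V\le R(V_0,\rho)$ and $Q_s^2\le I(V_0,\rho)$. Stopping at $\tau_\rho$ therefore makes $M_{\cdot\wedge\tau_\rho}$ a square-integrable martingale with $\Esp{M_{t\wedge\tau_\rho}^2}\le I(V_0,\rho)\, t$, while the drift obeys $\bigl|\int_0^{t\wedge\tau_\rho}\Lc V\, ds\bigr|\le \tfrac{1}{2}R(V_0,\rho)(t\wedge\tau_\rho)$.

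The third step exploits continuity of $t\mapsto V(x_t, y_t)$. On $\{\tau_\rho\le\Theta\}$ one has $|V(x_{\tau_\rho}, y_{\tau_\rho})-V_0|=\rho$, and the triangle inequality applied to the Itô decomposition gives
\begin{equation*}
\rho\;\le\;\tfrac{1}{2}R(V_0,\rho)\,\Theta+\sup_{s\le\Theta\wedge\tau_\rho}|M_s|,
\end{equation*}
so the event is contained in $\{\sup_{s\le\Theta\wedge\tau_\rho}|M_s|\ge \rho-\tfrac{1}{2}R(V_0,\rho)\Theta\}$ provided this right-hand side is positive. Doob's $L^2$ maximal inequality then yields
\begin{equation*}
\Pro{\tau_\rho\le\Theta}\;\le\;\frac{I(V_0,\rho)\,\Theta}{\bigl(\rho-\tfrac{1}{2}R(V_0,\rho)\Theta\bigr)^2}.
\end{equation*}
Equating the right-hand side with $I(V_0,\rho)/\mu^2$ reduces to a quadratic in $\sqrt{\Theta}$ whose admissible root is precisely $\Theta(\rho,\mu)$ of \eqref{eq:estimate}, completing the proof.

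The main obstacle will be twofold: algebraically, selecting the correct root of the quadratic $\mu^2\Theta=\bigl(\rho-R(V_0,\rho)\Theta/2\bigr)^2$ in $\sqrt{\Theta}$ and verifying the strict positivity of $\rho-R(V_0,\rho)\Theta/2$ needed for Doob's inequality to apply; conceptually, confirming that the one-sided maxima defining $R$ and $I$ genuinely control, respectively, the signed drift (through the triangle inequality) and the quadratic variation density (through the $L^2$ bound) uniformly on $D(V_0,\rho)$, independently of the sign of $\Lc V$ which \eqref{eq:dynkin to V} does not \emph{a priori} prescribe.
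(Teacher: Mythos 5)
Your proposal is correct and follows essentially the same route as the paper: It\^o's formula applied to $V$, the drift bounded via $R(V_0,\rho)$ using \eqref{eq:dynkin to V}, the martingale part controlled through $I(V_0,\rho)$ and a Doob-type maximal inequality, and $\Theta$ obtained by inverting $S(t)=\tfrac12 R(V_0,\rho)\,t+\mu\sqrt{t}$ at level $\rho$. The only material difference is cosmetic --- the paper states the maximal inequality for the $1/\sqrt{t}$-normalized stochastic integral on $(0,\tau_\rho]$ rather than for the stopped martingale at the fixed horizon $\Theta$, which your formulation actually renders more cleanly --- and the caveat you raise about $R$ being a one-sided maximum while the drift bound needs $|\Lc V|$ is a genuine (shared) imprecision that the paper itself glosses over by inserting absolute values under the integral.
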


\begin{proof}
Fix $\mu>0$.
Now we define the $\Fc_{\tau_\rho}$-measurable set
$
	A_{\mu} = \l\{ \omega\in\Omega: \sup_{0< t\le \tau_\rho} \big| M_t(\omega) \big| \le \mu \r\}
$
where $(M_t)_{t\ge 0}$ is a martingale defined by
$M_t=0$ for $t=0$ and for $t>0$ by

$$
	M_t = \frac{1}{\sqrt{t}} \int_0^t  
	\sigma(y_s)\l( \kappa(\wt x) - \kappa(x_s)
	+ \Phi(y_s) - \Phi(\wt y) \r)dW(s)\;.
$$
The process
$M$ is not right-continuous
at $t=0$
but still verifies
$
\Esp{M_t^2}\le I(V_0, \rho)
$
for all $0<t \le \tau_\rho$.
The property holds 
by replacing $M_t$
by its
\cadlag representation.
Doob's martingale inequality
can then be applied:
$\Pro{A_\mu}\ge 1-I(V_0,\rho)/\mu^2$.
At last, 
using It\^{o}'s formula,
we have from \eqref{eq:dynkin to V}:

\begin{eqnarray*}
		|V(x_t, y_t)-V_0| &\le & \demi \int_0^t \sigma^2(y_s)\l| \kappa(\wt x) - \kappa(x_s)-x_s\kappa'(x_s)
		+y_s \Phi'(y_s) + \Phi(y_s) - \Phi(\wt y) \r|ds\\
		& & + \bigg| \int_0^t \sigma(y_s)\l( \kappa(\wt x) - \kappa(x_s)
		+ \Phi(y_s) - \Phi(\wt y) \r)dW_s\bigg|
\end{eqnarray*}
so that on $\{(t,\omega)\in \R_+\x \Omega ~:~ (t,\omega)\in A_{\mu}\x [0,\tau_\rho(\omega)] \}$
$
|V(x_t, y_t)-V_0| \le\demi R(V_0,\rho) t + \mu \sqrt{t}=:S(t)
$ almost surely.
Also, 
$|e(t,\omega)|\le \rho$ on that set.
Put in another way,
$\tau_\rho > S^{-1}(\rho)=:\Theta(\rho)$ on $A_\mu$.
According to Bayes rule,

$$
	\Pro{\tau_\rho > \Theta(\rho)} \ge 
	\Pro{\tau_\rho > \Theta(\rho) \big| A_{\mu}} \Pro{A_{\mu}} \ge \Pro{A_{\mu}} 
	\ge	\l(1-\frac{I(V_0,\rho)}{\mu^2}\r) 
$$
\end{proof}

We now introduce the main result of the paper.
We provide the following tailor-made
definition for the cycling behavior of \eqref{eq:stochastic_goodwin_1}.

\begin{Definition}
 \label{def:stochastic orbit}
 Let $(x^* ,y^*)\in E\subseteq \R^2$ and $(x_0, y_0)\in E\backslash \{(x^* ,y^*)\}$.
 Let $(x_t, y_t)$ be a stochastic process
 starting at $(x_0, y_0)$ staying in $E$ almost surely. 
 We then introduce $(\rho_t)_{t\ge 0}$ the angle
 between $[x_t-x^*, y_t- y^*]^{\top}$
 and $[x_{0}-x^*, y_{0}-y^*]^{\top}$.
 Let $S:=\inf \{t>0~:~  |\rho_t| \ge 2\pi \text{ or } (x_t, y_t)=(x^* ,y^*) \}$ be a stopping time
 (a stochastic period). Then,
 the process $(x_t, y_t)$ is said to orbit stochastically
 around $(x^*, y^*)$ in $E$ if $S<+\infty$ almost surely.
\end{Definition}

\begin{Theorem}
\label{th:finite period}
Let $(x_0, y_0)\in D\backslash\{(\we x, \we y)\}$
and $(x_t, y_t)$ a solution to \eqref{eq:stochastic_goodwin_1}
starting at $(x_0, y_0)$.
Then $(x_t, y_t)$ orbits stochastically around $(\we x, \we y)$ in $D$.
\end{Theorem}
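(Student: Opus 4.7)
The plan is to apply stochastic Lyapunov/recurrence techniques in the spirit of \cite{khasminskii, thygesen1997}. Partition $D\setminus\{(\we x,\we y)\}$ into four open angular sectors $Q_1,Q_2,Q_3,Q_4$ around $(\we x,\we y)$, separated by the two drift-nullclines of \eqref{eq:stochastic_goodwin_1}: the horizontal line $\{y=\we y\}$, on which the drift of $x_t$ vanishes (unique by Assumption \ref{ass:unique_root}), and the curve $\{(x,y)\in D:\kappa(x)+\sigma^2(y)=\gamma\}$, on which the drift of $y_t$ vanishes. These two nullclines meet only at $(\we x,\we y)$, so the four sectors are well-defined. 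Inside each $Q_i$ the deterministic part of \eqref{eq:stochastic_goodwin_1} has a fixed sign pattern that transports trajectories into the next sector $Q_{i+1}$ (indices modulo $4$), in the same rotational sense as the closed orbits of Lemma \ref{lem:goodwin system}.

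For the stochastic system I would then show that, starting anywhere in $Q_i$, the process almost surely exits $Q_i$ into $Q_{i+1}$ in finite time. On each sector this reduces to constructing a smooth non-negative Lyapunov function $U_i$ with $\Lc U_i\le -c_i<0$ uniformly away from $(\we x,\we y)$ and $\partial D$, and whose level-set geometry forces exit through the common boundary with $Q_{i+1}$ rather than with $Q_{i-1}$. A natural candidate combines the entropy $V$ of Definition \ref{def:Lyapunov}, which via \eqref{eq:dynkin to V} and Assumption \ref{ass:growth conditions} prevents escape to $\partial D$ (as already exploited in Theorem \ref{prop:existence}), with a monotone function of an angular-type coordinate $\theta$ centered at $(\we x,\we y)$. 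The drift of $\theta$ inside $Q_i$ has a definite sign thanks to the nullcline geometry, so Dynkin's formula together with a localization argument yields finite expected hitting time of the correct piece of $\partial Q_i$.

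The main obstacle is neighborhoods of $(\we x,\we y)$, where the deterministic drift vanishes and the diffusion can temporarily rotate the process in arbitrary directions, so that $\Lc U_i$ cannot be bounded above by a strictly negative constant on all of $Q_i$. The remedy uses Remark \ref{rem:basic properties}: since $\Lc V(\we x,\we y)>0$ by \eqref{eq:dynkin to V}, the entropy $V$ itself acts as a local repulsive function near the fixed point, so the process exits every small enough neighborhood of $(\we x,\we y)$ in finite expected time. Theorem \ref{prop:estimate} supplies the quantitative control needed, since it ensures that $V(x_t,y_t)$ cannot remain arbitrarily close to $V(\we x,\we y)$ on long intervals. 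One then covers $Q_i$ by a small neighborhood of $(\we x,\we y)$, where $V$ drives the escape, together with its complement in $Q_i$, where $U_i$ does, and patches the two estimates through the strong Markov property.

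Combining the pieces, by induction on the sector index and the strong Markov property the process almost surely visits $Q_1,Q_2,Q_3,Q_4,Q_1$ cyclically in finite time. Tracking the angular coordinate $\rho_t$ of Definition \ref{def:stochastic orbit} across these four transitions, $|\rho_t|$ accumulates $2\pi$ at a finite stopping time $S$, which yields the almost sure finiteness of $S$ claimed in the theorem.
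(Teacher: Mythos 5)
Your overall architecture (sector decomposition around $(\we x,\we y)$, Lyapunov-driven exit times, chaining by the Markov property) matches the paper's strategy, but there is a genuine gap at the single most difficult point: you assert that the ``level-set geometry'' of a Lyapunov function $U_i$ with $\Lc U_i\le -c_i<0$ ``forces exit through the common boundary with $Q_{i+1}$ rather than with $Q_{i-1}$.'' A negative generator only gives almost sure exit from $Q_i$ in finite time; it says nothing about which piece of $\partial Q_i$ is hit, and no choice of level sets can turn Theorem \ref{th:reccurent domain} into a directional statement. The paper has to work hard precisely here, and it does so with two separate mechanisms. First, because $x$ and $y$ carry the \emph{same} noise $\sigma(y_t)dW_t$, the ratio $\theta_t=y_t/x_t$ is a finite-variation process with drift $\theta_t(\kappa(x_t)-\gamma+\alpha-\Phi(y_t))$ (Definition \ref{def:theta}); its monotonicity in half of the domain rigorously forbids backward crossing there (Proposition \ref{prop:R1_R8}). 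Your nullcline-based sectors do not expose this structure. Second, in the southern and northern quadrants backward crossing \emph{cannot} be excluded: the paper only obtains a uniform positive probability of forward crossing per attempt, via the supermartingale $F_t=\tan(\tan^{-1}(\vp_t)+\tan^{-1}(c))$ and optional sampling (Proposition \ref{prop:R2_to_R4}), and then rules out infinitely many failures by a product-of-conditional-probabilities argument combined with a contradiction: if the forward-crossing probability tended to $1$ along the failure event, exponential-moment estimates on the inter-crossing times would force $(x_t,y_t)\to(\we x,\we y)$, which is impossible (step 3 of Proposition \ref{prop:R4_to_R5}). None of this is replaced by your appeal to $\Lc V(\we x,\we y)>0$ or to Theorem \ref{prop:estimate}: escaping a small neighborhood of the fixed point in finite time does not prevent the process from shuttling indefinitely across the $Q_i$/$Q_{i-1}$ interface without ever accumulating a full rotation, which is exactly the failure mode the paper's argument is built to exclude.

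A secondary point: your sectors are bounded by the nullclines of the $x$- and $y$-drifts, whereas the paper uses eight regions bounded by the ray $\theta=\we\theta$, the lines $x=\we x$ and $y=\we y$, and the curve $y=f(x)$ on which $d\theta=0$. This finer decomposition is not cosmetic; it is what lets each elementary Lyapunov function ($\sqrt y$, $\sqrt x$, $1/y$, $x^{-2}$, $\sqrt{\wt x-x}$) have a sign-definite generator on its region, and what makes the monotonicity of $\theta$ usable. As written, your proposal would need to be substantially reworked to close the directional-exit gap before the induction on sectors can be carried out.
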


More precisely the system \eqref{eq:stochastic_goodwin_1}
produces clockwise orbits inside $D$.
The angle $\rho_t$ is only defined if $(x_t, y_t)\ne (\we x,\we y)$.
This can be ensured by either proving that
$(x_t, y_t)\ne (\we x,\we y)$ for all $t\ge 0$ almost surely,
or by defining $S$ as in Definition \ref{def:stochastic orbit}.
See also Remark \ref{rem:out of point}
The proof of Theorem \ref{th:finite period} is 
removed to Section \ref{sec:proof}.

\section{Proof of Theorem \ref{th:finite period}}
\label{sec:proof}


\subsection{Preliminary definitions and results}
\label{sec:preamble}

Recall that the probability space is given by $(\Omega, \Fc, \P)$
with the filtration generated only by $W$ is completed with null sets.
Our proof,
although unwieldy,
allows us to describe
precisely the possible trajectories
of solutions of \eqref{eq:stochastic_goodwin_1}.
it consists in defining
subregions $(R_i)_i$ of the domain $D$,
see Definition \ref{def:regions} below
illustrated by Fig. \ref{fig:rotation},
and prove that
the process exits from them
in finite time
by the appropriate frontier.
According to 
Theorem \ref{prop:existence} any regular solution
of \eqref{eq:stochastic_goodwin_1}
is a Markov process.
We then repeatedly
change the initial
condition of the system,
as equivalent of 
a time translation and use Definition \ref{def:exit times} herefater.
We obtain recurrence properties via Theorem 3.9 in \cite{khasminskii}.
Since it is repeatedly used hereafter, we provide here
a version suited to our context.

\begin{Theorem}
\label{th:reccurent domain}
Let $(x_t, y_t)_{t\ge 0}$
be a regular solution of \eqref{eq:stochastic_goodwin_1} in $D$,
starting at $(x_0, y_0)\in U$,
for some $U\subset D$.
Let $\vp(t,x,y)\in \Cc^{1,2,2}(\R_+\x U)$
verifying $\vp(t,x,y)\ge 0$ for all $(t,x,y)\in U$ and
$
\Lc \vp(s,x,y)\le -\vp(s)
$
where $\vp(s)\ge 0$ and $\lim_t \int_0^t \vp(s)ds=+\infty$.
Then $(x_t, y_t)$ leaves the region $U$ in finite time
almost surely.
\end{Theorem}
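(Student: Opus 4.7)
The plan is to derive the exit from $U$ by a Dynkin-formula argument applied to $\vp$. Set $\tau := \inf\{t \ge 0 : (x_t, y_t) \notin U\}$; to lighten notation and avoid the overload in the statement, write $\psi(s)$ for the time-only function bounding $-\Lc \vp$ from below, so that the hypothesis reads $\Lc \vp(s,x,y) \le -\psi(s)$ with $\psi \ge 0$ and $\lim_t \int_0^t \psi(s)\, ds = +\infty$. The target is to show $\Pro{\tau < +\infty} = 1$, and I would argue by contradiction from $\Pro{\tau = +\infty}>0$.

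First I would introduce a localizing exhaustion $(U_n)_{n\ge 1}$ of relatively compact open sets with $\overline{U_n} \subset U$ and $\bigcup_n U_n = U$, and set $\tau_n := \inf\{t \ge 0 : (x_t, y_t) \notin U_n\}$. The regularity of $(x_t, y_t)$ provided by Theorem \ref{prop:existence} ensures $\tau_n \uparrow \tau$ almost surely. On $[0, t\wedge \tau_n]$ the process lies in $\overline{U_n} \subset D$, the coefficients of \eqref{eq:stochastic_goodwin_1} and the partial derivatives of $\vp$ are bounded by continuity, and It\^o's formula applied to $\vp(s, x_s, y_s)$ produces a genuine $L^2$-martingale as the $dW$-integral. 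Taking expectations and using $\Lc \vp \le -\psi$ together with $\vp \ge 0$ yields
\begin{equation*}
\Esp{\int_0^{t \wedge \tau_n} \psi(s)\, ds} \;\le\; \vp(0, x_0, y_0) - \Esp{\vp(t \wedge \tau_n, x_{t \wedge \tau_n}, y_{t \wedge \tau_n})} \;\le\; \vp(0, x_0, y_0) .
\end{equation*}

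Second, I would pass to the limit by monotone convergence, first in $n$ and then in $t$, to obtain
\begin{equation*}
\Esp{\int_0^{\tau} \psi(s)\, ds} \;\le\; \vp(0, x_0, y_0) \;<\; +\infty .
\end{equation*}
On the event $\{\tau = +\infty\}$ the divergence hypothesis forces $\int_0^{\tau} \psi(s)\, ds = +\infty$ pathwise, so any positive probability of that event would make the expectation above infinite, contradicting the finite bound. Hence $\Pro{\tau < +\infty} = 1$, which is the claim.

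The main obstacle is the localization together with the genuine-martingale justification of the Dynkin identity: $\vp$ is not assumed bounded on $U$, and the hypothesis $\Lc \vp \le -\psi$ alone does not control the $dW$-integrand. The relatively compact exhaustion $(U_n)$ resolves this, because continuity makes $\vp$, its derivatives, and the diffusion coefficients bounded on $[0, t] \x \overline{U_n}$; in particular the stopped stochastic integral has zero expectation. The remaining ingredients — Fatou or monotone convergence and the final contradiction from pathwise divergence of $\int_0^t \psi(s)\, ds$ — are routine, which is why the statement is a direct specialization of Theorem~3.9 in \cite{khasminskii} to the setting of \eqref{eq:stochastic_goodwin_1}.
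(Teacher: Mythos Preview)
Your argument is correct and is precisely the standard Dynkin--localization proof of this recurrence criterion. The paper does not give its own proof of this theorem: it is stated without proof as a reformulation of Theorem~3.9 in \cite{khasminskii}, so there is nothing to compare beyond noting that your write-up matches the classical argument behind that reference.
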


\begin{Definition}
 \label{def:theta}
 Let $f$ be defined by 
 $f~:~x\in \R_+ \mapsto f(x):= \Phi^{-1}(\alpha-\gamma+\kappa(x))$
 as a concave decreasing function.
 For a solution $(x_t, y_t)$ to \eqref{eq:stochastic_goodwin_1},
 we define $\theta_t:=y_t/x_t$ the finite variation process verifying
 $
  d\theta_t =  \theta_t\l(\kappa(x) - \gamma + \alpha - \Phi(y)\r)dt
  = \theta_t \l( \Phi(f(x)) - \Phi(y) \r)dt
 $.
 Additionally, let $\we \theta: =  \we y/\we x$.
\end{Definition}

\begin{Definition}
 \label{def:regions}
 We define eight sets $(R_i)_{i=1,\ldots, 8}$
 such that $\bigcap_{i=1}^8 R_i = (\we y, \we x)$
 and $\bigcup_{i=1}^8 R_i=D$, by
 
$$
\left\{
\begin{array}{l}
R_1 := \{ (x, y)\in D~:~y\ge\we y\And \theta_t\le \we \theta \}\\
 R_2 := \{ (x, y)\in D~:~ f(x)\le y\le \we y \}\\
R_3 := \{ (x, y)\in D~:~ y \le f(x)\And x\ge \we x \}\\
R_4 := \{ (x, y)\in D~:~ x\le \we x\And \theta \le \we \theta \}\\
R_5 := \{ (x, y)\in D~:~ y\le\we y\And \theta_t \ge \we \theta \}\\
R_6 := \{ (x, y)\in D~:~ \we y\le y \le f(x) \}\\
R_7 := \{ (x, y)\in D~:~ y\ge f(x)\And x_t\le \we x \}
\\
R_8 := \{ (x, y)\in D~:~ y\ge f(x)\And x_t\le \we x \} .
\end{array}
\right.
$$
\end{Definition}

\begin{Definition}
 \label{def:exit times}
 Let $(x_t, y_t)$ be a solution to \eqref{eq:stochastic_goodwin_1}
 starting at $(x_0,y_0)=(x,y)\in D$. For any $i\in \{1, \ldots, 8\}$, 
 we define the stopping times $\tau_i (x, y):=\inf\{t\ge 0~:~ (x_t, y_t)\in R_i \}$.
\end{Definition}

\begin{figure}[htp]
\includegraphics[width=\textwidth, height=7cm]{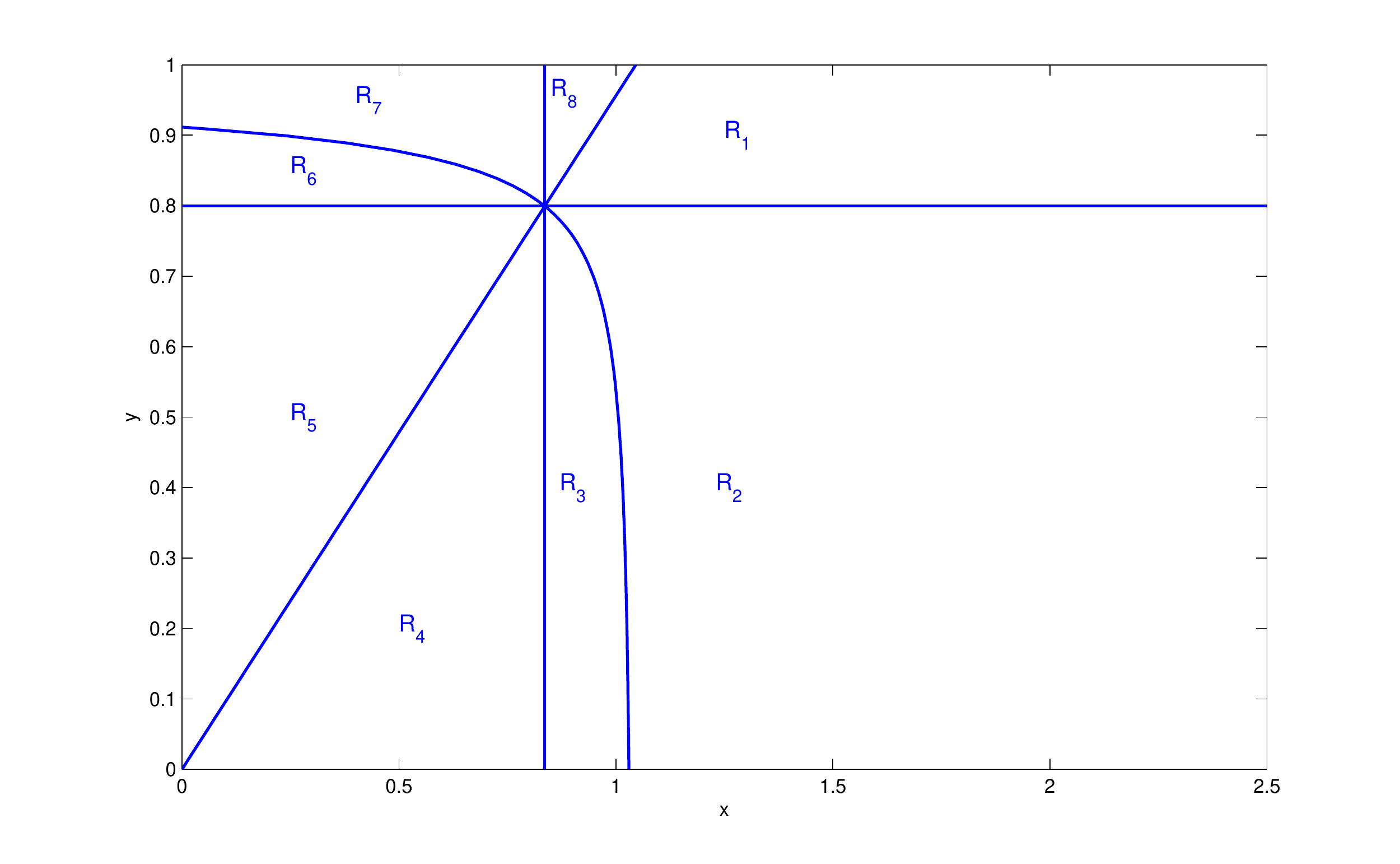}
\caption{Covering 
of $D:=\R_+^*)\x (0,1)$ by $(R_i)_{i=1\ldots 8}$.
Since $f(0)<1$
and $\lim_{y\uparrow1}\Phi(y)=+\infty$,
the graph 
illustrates the general case.}
\label{fig:rotation}
\end{figure}

%
\begin{Remark}
 \label{rem:out of point}
It seems rather clear that
the point $(\we x, \we y)$ is
not reached in finite time with a positive probability.
In the following,
the fact that $\Lc V(x,y)>\eps$ 
for some small $\eps>0$ in a neighborhood
of $(\we x, \we y)$ implies
that $(\we x, \we y)$ is not
a limit to almost every path of a solution to \eqref{eq:stochastic_goodwin_1},
recall Remark \ref{rem:basic properties}.
\end{Remark}

To ease the reading of the proof of Theorem \ref{th:finite period}
which follows from the following
Propositions \ref{prop:R1_R8} to \ref{prop:R8_R1},
we divide it in four quadrants around $(\we x, \we y)$.
We first prove 
that the process cycles,
even in infinite time,
for some particular starting points.

\begin{Proposition}
\label{prop:R1_R8}
If $(x_0, y_0)\in R_1$,
then
$\Pro{\tau_8(x_0, y_0)\le \tau_7(x_0, y_0)}=0$.
If $(x_0, y_0)\in R_5$, then
$\Pro{\tau_4(x_0, y_0)\le \tau_3(x_0, y_0)}=0$.
\end{Proposition}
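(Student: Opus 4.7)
The approach relies on Definition \ref{def:theta}: the ratio $\theta_t=y_t/x_t$ is a finite-variation process satisfying the pathwise ODE $d\theta_t/dt=\theta_t(\Phi(f(x_t))-\Phi(y_t))$, so $\theta$ is non-increasing on the set $\{y\ge f(x)\}=R_1\cup R_2\cup R_7\cup R_8$ and non-decreasing on $\{y\le f(x)\}=R_3\cup R_4\cup R_5\cup R_6$, with strict monotonicity off the locus $\{y=f(x)\}$. This deterministic behaviour of $\theta$ on each side of $f$ is the mechanism forcing a solution of \eqref{eq:stochastic_goodwin_1} to visit the regions $(R_i)_{i=1,\ldots,8}$ in clockwise order.

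For the first claim, fix $(x_0,y_0)\in R_1$, so $y_0\ge\we y$ and $\theta_0\le\we\theta$, which forces $x_0\ge \we x$; in the interior of $R_1$ one has the strict inequality $y>\we y\ge f(x)$, so $d\theta_t/dt<0$ strictly. Reading $R_8$ (whose displayed definition appears to contain a typo, and which by symmetry with $R_4$ we take to be $\{x\ge\we x,\theta\ge\we\theta\}$), we see that $R_8$ is adjacent to $R_1$ along the half-line $\{\theta=\we\theta,\,x\ge\we x\}$ and to $R_7$ along $\{x=\we x,\,y\ge\we y\}$. By continuity of the paths, first entry to $R_8$ is realized across one of these two boundaries. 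A crossing from $R_1$ would require $d\theta/dt\ge 0$ at the crossing point, contradicting the strict negativity on $R_1\setminus\{(\we x,\we y)\}$; the exceptional scenario of a crossing at $(\we x,\we y)$ is ruled out almost surely by Remark \ref{rem:out of point}. The only admissible entry is therefore from $R_7$, yielding $\tau_7\le\tau_8$ a.s.

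For the second claim, the mirror argument applies. For $(x_0,y_0)\in R_5$ one has $\theta_0\ge\we\theta$ and $x_0\le\we x$, and in the interior of $R_5$ the inequality $y<\we y\le f(x)$ gives $d\theta_t/dt>0$ strictly. The region $R_4$ is adjacent only to $R_3$ along $\{x=\we x,\,\theta\le\we\theta\}$ and to $R_5$ along $\{\theta=\we\theta,\,x\le\we x\}$; a crossing from $R_5$ into $R_4$ would demand $d\theta/dt\le 0$, contradicting strict positivity in $R_5\setminus\{(\we x,\we y)\}$. Hence first entry to $R_4$ occurs through $R_3$, so $\tau_3\le\tau_4$ a.s.

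The delicate point is that between time $0$ and $\tau_8$ (respectively $\tau_4$) the trajectory may freely leave and re-enter $R_1$ (respectively $R_5$) through its other boundary $y=\we y$, so a naive one-sojourn argument does not work. The resolution is local at the first entry time: by continuity of $(x_t,y_t)$ the entrance point lies on $\partial R_8$ (resp.\ $\partial R_4$), and applying the sign-of-$d\theta/dt$ argument to the specific boundary component crossed identifies the pre-crossing region as $R_7$ (resp.\ $R_3$). The only anomalous limiting scenario, a trajectory approaching $(\we x,\we y)$, is excluded almost surely by the non-attainability recorded in Remark \ref{rem:out of point}.
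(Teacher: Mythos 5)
Your proof is correct and rests on exactly the same mechanism as the paper's: $\theta_t=y_t/x_t$ has no Brownian component and is monotone (strictly, off the curve $y=f(x)$ and the point $(\we x,\we y)$) on each side of that curve, so the ray $\{\theta=\we\theta\}$ cannot be crossed from the $R_1$ (resp.\ $R_5$) side, forcing first entry into $R_8$ (resp.\ $R_4$) to occur through $R_7$ (resp.\ $R_3$). If anything, your write-up is more careful than the paper's two-line argument, since you flag the typo in the definition of $R_8$, localize at the first entry time rather than only on $[0,\tau_3]$, and explicitly dispose of the exceptional crossing at $(\we x,\we y)$.
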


\begin{proof}
This is a direct consequence
of the absence of Brownian motion in
$\theta$.
Take $(x_0, y_0)\in R_1$.
Then on $[0,\tau_3(x_0, y_0)]$,
the process $\theta$ is
non increasing almost surely,
meaning that
$R_8$ cannot be reached
without first crossing region $R_7$.
The other side is identical.
 \end{proof}

\begin{Remark}
\label{rem:R1_R2_R3_R4}
Proposition \ref{prop:R1_R8} holds
even if $\tau_i=+\infty$,
for any $i$ involved.
It also implies that
if $(x_0, y_0)\in R_1\cup R_5$,
then
$
\tau_i(x_0, y_0)\le \tau_{j}(x_0, y_0)$
almost surely for $j \in \{mod(i+ 1,8)\}
$.
\end{Remark}

\subsection{Eastern quadrant}
\label{sec:west}

We ought to prove that
for $(x_0,y_0)\in R_1$,
the process reaches $R_3$ in finite time almost surely.

\begin{Proposition}
\label{prop:R1_R2}
If $(x_0, y_0)\in R_1$
then $\Pro{\tau_2(x_0, y_0)<+\infty}=1$.
\end{Proposition}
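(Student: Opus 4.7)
The plan is to apply the recurrent-domain criterion of Theorem \ref{th:reccurent domain} on $U = R_1$ to obtain a finite-time exit of $R_1$, and then to use the geometry of the partition to ensure the exit happens through the frontier with $R_2$.

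First, I would identify and discard the unwanted exit boundaries of $R_1$. Inside $D$, the boundary of $R_1$ consists of the segment $\{y = \we y,\, x \ge \we x\}$ shared with $R_2$, and the ray $\{\theta = \we\theta,\, y \ge \we y\}$ shared with $R_8$; the ``boundary at infinity'' components $y \uparrow 1$ and $x \uparrow +\infty$ cannot be reached in finite time by Theorem \ref{prop:existence}. To rule out an upward crossing of $\{\theta = \we\theta\}$, observe that in $R_1$ one has $x = y/\theta \ge \we y/\we\theta = \we x$; since $f$ is decreasing, $f(x) \le f(\we x) = \we y \le y$. Definition \ref{def:theta} then yields $d\theta_t = \theta_t (\Phi(f(x_t)) - \Phi(y_t))\,dt \le 0$ throughout any sojourn in $R_1$, so $\theta$ is pathwise non-increasing there and cannot cross $\we\theta$ upward.

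Second, I would construct a non-negative Lyapunov function $\varphi$ on $R_1$ with $\Lc \varphi \le -c$ for some $c > 0$. A natural candidate is a power of $\theta$, $\varphi(x,y) := \theta^a$ for some $a > 0$. Since $\theta$ has finite variation, $\Lc \varphi = a\theta^a (\Phi(f(x)) - \Phi(y)) \le 0$ on $R_1$, with $\varphi \ge 0$ trivially. The difficulty is that this drift vanishes at the equilibrium corner $(\we x, \we y) \in R_1$, so it is not uniformly negative on $R_1$. I would overcome this by invoking Remark \ref{rem:basic properties}, which gives $\Lc V(\we x, \we y) > 0$: the function $V$ of Definition \ref{def:Lyapunov} acts as a repulsive potential near the equilibrium, and Theorem \ref{prop:estimate} quantifies that the process spends only a bounded amount of time in any neighborhood $\{(x,y) : |V(x,y) - V(\we x, \we y)| \le \rho\}$ of the corner. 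Outside such a neighborhood, $\Phi(y) - \Phi(f(x)) \ge \delta(\rho) > 0$ and one obtains $\Lc \varphi \le -c(\rho) < 0$, enabling the application of Theorem \ref{th:reccurent domain}.

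The main obstacle is patching these two regimes coherently: one must ensure that the process cannot remain trapped in $R_1$ by oscillating indefinitely between a small neighborhood of $(\we x, \we y)$, where $\Lc \varphi$ vanishes, and its complement, where the Lyapunov bound suffices. This will likely require a two-scale argument based on iterated stopping times, using the repulsion estimate of Theorem \ref{prop:estimate} together with the monotonicity of $\theta$ to show that each excursion through the corner eventually pushes the process into the complement region where $\Lc \varphi$ is strictly negative. Once the process is shown to leave $R_1$ in finite time almost surely, the geometric analysis of the first paragraph forces the exit point to lie on $\{y = \we y,\, x \ge \we x\} \subset R_2$, yielding $\tau_2(x_0, y_0) < +\infty$ almost surely.
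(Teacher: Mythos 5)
Your reduction of the problem is sound: the monotonicity of $\theta$ in $R_1$ (Proposition \ref{prop:R1_R8}) rules out exit through $R_8$, and Theorem \ref{prop:existence} rules out reaching the boundary of $D$, so it suffices to force a finite-time exit from $R_1$. The gap is in how you force that exit. Your Lyapunov candidate $\vp=\theta^a$ has $\Lc\vp = a\theta^a\l(\Phi(f(x))-\Phi(y)\r)$, which vanishes at the corner $(\we x,\we y)\in R_1$, and the device you invoke to handle a neighborhood of that corner does not do what you claim: Theorem \ref{prop:estimate} gives a \emph{lower} bound on the probability that the process \emph{remains} in the level band $D(V_0,\rho)=\{|V-V_0|\le\rho\}$ up to time $\Theta(\rho,\mu)$ — it is a confinement estimate around a deterministic orbit, not an upper bound on the occupation time of a neighborhood of $(\we x,\we y)$. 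Likewise, $\Lc V(\we x,\we y)>0$ from Remark \ref{rem:basic properties} suggests repulsion but does not by itself bound the time spent near the corner; turning it into such a bound is exactly the ``two-scale'' patching you defer to future work. As written, the argument that the process cannot linger indefinitely near $(\we x,\we y)$ inside $R_1$ is missing, and that is the crux of the proposition.

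The paper sidesteps the degeneracy entirely by choosing $\vp(y)=\sqrt{y}$. A direct computation gives
\[
\Lc\vp(y)=\frac{\vp(y)}{2}\l(\kappa(x)-\gamma+\tfrac{3}{4}\sigma^2(y)\r),
\]
and on $R_1$ one has $x\ge\we x$, hence $\kappa(x)-\gamma\le-\sigma^2(\we y)$, so the generator is bounded above by a strictly negative constant \emph{uniformly on $R_1$, including at $(\we x,\we y)$}: the strict concavity of the square root converts the nondegenerate noise into a strictly negative It\^{o} correction precisely where the drift of $y$ vanishes. A single application of Theorem \ref{th:reccurent domain} then yields finite-time exit, and your first paragraph finishes the proof. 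I would encourage you to replace $\theta^a$ by a strictly concave function of $y$ (or of some monotone coordinate driven by the Brownian term) so that the second-order term of the generator does the work at the corner, rather than attempting to patch two regimes.
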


\begin{proof}
Let $\vp~:~y \in [0,1]\mapsto \sqrt{y}$.
Then $\vp(y)\ge \vp (\we y)>0$ for any $y$
such that $(x,y)\in R_1$.
Moreover,

$$
\Lc \vp(y) = \frac{\vp(y)}{2}\l(\kappa(x)-\gamma + \frac{3}{4}\sigma^2(y)\r)
\le -\frac{\sigma^2(\we y)\vp(y) }{8}\le -\frac{\sigma^2(\we y)h(\we y)}{8}<0 \;.
$$
Theorem \ref{th:reccurent domain} stipulates
that $(x_t,y_t)$ leaves $R_1$
in finite time almost surely which is only possible via
$R_2$ according to 
Proposition \ref{prop:R1_R8}.
Reaching the boundary is prevented by Theorem \ref{prop:existence}.
 
\end{proof}

\begin{Proposition}
\label{prop:leave R_2 and R_3}
If $(x_0, y_0)\in R_2\cup R_3$ then
$\Pro{\tau_1(x_0,y_0)\wedge \tau_4(x_0, y_0)<+\infty}=1$.
\end{Proposition}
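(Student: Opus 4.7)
The first step is to identify $R_2\cup R_3$ explicitly. Since $f$ is decreasing (Definition \ref{def:theta}) with $f(\we x)=\we y$ by Assumption \ref{ass:unique_root}, the inequality $f(x)\le \we y$ is equivalent to $x\ge \we x$, so $R_2\cup R_3 = U := \{(x,y)\in D : x\ge \we x,\; 0<y\le \we y\}$. The interior boundary of $U$ in $D$ splits into $B_1:=\{(x,\we y)\in D : x\ge \we x\}$, on which $\theta=\we y/x\le \we\theta$ (hence $B_1\subset R_1$), and $B_4:=\{(\we x,y)\in D : 0<y\le \we y\}$, on which $\theta=y/\we x\le \we\theta$ (hence $B_4\subset R_4$). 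Consequently, any exit from $U$ that stays inside $D$ is automatically an entry into $R_1\cup R_4$.

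By direct analogy with the proof of Proposition \ref{prop:R1_R2}, I would take as candidate Lyapunov function $\varphi(x,y):=\sqrt{x}$, which is non-negative on $D$ and satisfies $\varphi\ge \sqrt{\we x}>0$ on $U$. A short computation using \eqref{eq:generator} yields
$$
\Lc\varphi(x,y) \;=\; \frac{\sqrt{x}}{2}\left(\Phi(y)-\alpha+\tfrac{3}{4}\sigma^2(y)\right).
$$
On $U$ we have $y\le \we y$, hence $\Phi(y)-\alpha\le \Phi(\we y)-\alpha=-\sigma^2(\we y)$. Combined with the same control $\sigma^2(y)\le \sigma^2(\we y)$ implicitly used in Proposition \ref{prop:R1_R2}, this gives $\Lc\varphi\le -\sigma^2(\we y)\sqrt{\we x}/8 =: -c<0$ uniformly on $U$.

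Applying Theorem \ref{th:reccurent domain} to $\varphi$ on $U$ with the constant $\psi(s)\equiv c$ (whose integral on $\R_+$ diverges) then shows that $(x_t,y_t)$ exits $U$ in finite time almost surely. By Theorem \ref{prop:existence} the trajectory cannot touch $\partial D$, so the exit must occur through $B_1\cup B_4$, which by the first paragraph is exactly the event $\{\tau_1\wedge \tau_4<+\infty\}$, concluding the proof.

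The main technical obstacle is maintaining a strictly negative upper bound on $\Lc\varphi$ uniformly across the unbounded region $U$, where $x$ can diverge and $y$ can approach $0$. The choice $\varphi=\sqrt{x}$ is essentially dictated by the symmetry with Proposition \ref{prop:R1_R2}: the $\sqrt{x}$ prefactor is homogeneous with the $x$-drift and diffusion so the sign computation scales correctly, while the positive floor $\sqrt{\we x}$ on $U$ supplies the constant $\psi$ needed to invoke Theorem \ref{th:reccurent domain} without a time-dependent weight.
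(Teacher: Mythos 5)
Your proposal takes exactly the route the paper intends: the paper's own proof of this proposition is the single sentence ``We follow the proof of Proposition \ref{prop:R1_R2} with $\vp:x\mapsto\sqrt{x}$'', and your identification of $R_2\cup R_3$ as $\{x\ge\we x,\;0<y\le\we y\}$, the computation $\Lc\sqrt{x}=\tfrac{\sqrt{x}}{2}\bigl(\Phi(y)-\alpha+\tfrac34\sigma^2(y)\bigr)$, and the appeal to Theorem \ref{th:reccurent domain} together with the non-attainability of $\partial D$ are precisely the details the authors leave implicit.

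One step as written would fail, though. On $U=R_2\cup R_3$ you have $y\le\we y$, and for the volatilities the paper actually works with ($\sigma$ decreasing in $y$, e.g.\ $\sigma(y)=\sigma_0(1-y)$ in Section \ref{sec:example}) this gives $\sigma^2(y)\ge\sigma^2(\we y)$, not $\le$; the control $\sigma^2(y)\le\sigma^2(\we y)$ you import from Proposition \ref{prop:R1_R2} is available there only because $R_1$ sits in $\{y\ge\we y\}$, and it reverses on your region. Consequently $-\sigma^2(\we y)+\tfrac34\sigma^2(y)$ need not be bounded by $-\tfrac14\sigma^2(\we y)$. The fix is to use the sign structure of the drift rather than any monotonicity of $\sigma$: for $y\le\we y$ one has $\Phi(y)-\alpha+\sigma^2(y)\le 0$ (this is the defining property of $\we y$, used explicitly in the proof of Proposition \ref{prop:R2_to_R4}), hence
$$
\Lc\sqrt{x}=\frac{\sqrt{x}}{2}\Bigl(\bigl(\Phi(y)-\alpha+\sigma^2(y)\bigr)-\tfrac14\sigma^2(y)\Bigr)\le-\frac{\sqrt{x}\,\sigma^2(y)}{8}\le-\frac{\sqrt{\we x}}{8}\,\inf_{0<y\le\we y}\sigma^2(y)<0\;,
$$
the infimum being positive because $x\ge\we x$ on $U$ and $\sigma$ is positive and bounded away from zero on the compact set $[0,\we y]$. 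With this one-line replacement the rest of your argument goes through unchanged.
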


\begin{proof}
We follow the proof of
Proposition \ref{prop:R1_R2}
with $\vp~:~x\in \R_+ \mapsto \sqrt{x}$.
 
\end{proof}

\begin{Proposition}
\label{prop:R_1 to R_3}
If $(x_0, y_0)\in R_1\cup R_2$ then
$\Pro{\tau_3(x_0, y_0)<+\infty}=1$.
\end{Proposition}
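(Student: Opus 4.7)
The plan is to apply Theorem \ref{th:reccurent domain} to the combined region $U := R_1 \cup R_2$ and then identify the exit boundary. Proposition \ref{prop:R1_R2} already provides $\tau_2<+\infty$ when $(x_0,y_0)\in R_1$, so by the strong Markov property we may assume $(x_0,y_0)\in R_2$, or equivalently treat $U$ uniformly. The crucial structural estimate is $\kappa(x)-\gamma\le -\sigma^2(\we y)$ on all of $U$. In $R_1$ this follows from $\theta\le\we\theta$ and $y\ge\we y$, which together force $x\ge\we x$ and hence $\kappa(x)\le\kappa(\we x)=\gamma-\sigma^2(\we y)$. In $R_2$ the defining inequalities $f(x)\le y\le\we y$ combined with $\Phi\circ f=\alpha-\gamma+\kappa$ and $\Phi(\we y)=\alpha-\sigma^2(\we y)$ yield the same bound.

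With this estimate in hand, take the Lyapunov function $\vp(y)=\sqrt{y}$ from the proof of Proposition \ref{prop:R1_R2}, so that $\Lc\vp(y)=\tfrac{\vp(y)}{2}(\kappa(x)-\gamma+\tfrac{3}{4}\sigma^2(y))$. Under the same control of $\sigma$ used there, the bound gives $\Lc\vp\le -c<0$ on $U$, and Theorem \ref{th:reccurent domain} yields exit from $U$ in finite time almost surely. It remains to identify the exit: the interior boundary of $U$ in $D$ consists of the ray $\{\theta=\we\theta,\ y\ge\we y\}$ (adjoining $R_8$) and the curve $\{y=f(x),\ x\ge\we x\}$ (adjoining $R_3$), meeting at the corner $(\we x,\we y)$. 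On $U$, the finite-variation process $\theta$ satisfies $d\theta=\theta(\Phi(f(x))-\Phi(y))dt\le 0$, using $y\ge f(x)$ in both $R_1$ and $R_2$; hence $\theta_t\le\theta_0\le\we\theta$ and the ray cannot be crossed outward. The corner is not reached with positive probability by Remark \ref{rem:out of point}, and $\partial D$ is inaccessible by Theorem \ref{prop:existence}. Therefore $(x_t,y_t)$ must exit $U$ through $\{y=f(x)\}$ into $R_3$, i.e., $\tau_3(x_0,y_0)<+\infty$ almost surely.

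The main obstacle is securing the uniform negativity of $\Lc\vp$ throughout $R_2$: as $x$ grows, $y$ can approach $f(x)$ and become arbitrarily small, weakening the factor $\vp(y)=\sqrt{y}$. Should the direct estimate fail, an iterative fallback exploits Proposition \ref{prop:leave R_2 and R_3} (which gives $\tau_1\wedge\tau_4<+\infty$ from $R_2$), the geometric fact that $R_2$ and $R_4$ lie on opposite sides of $\{x=\we x\}$ so that any trajectory from $R_2$ to $R_4$ must first pass through $R_3$, and Proposition \ref{prop:R1_R2} to return from $R_1$ to $R_2$; the monotonicity of $\theta$ on $U$ then rules out an infinite oscillation between $R_1$ and $R_2$ avoiding $R_3$.
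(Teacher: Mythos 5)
There is a genuine gap, and it sits exactly where you flag your own worry. Your primary route (Theorem \ref{th:reccurent domain} on $U=R_1\cup R_2$ with $\vp(y)=\sqrt{y}$) does not go through: the bound $\Lc\vp\le\frac{\vp(y)}{2}(\kappa(x)-\gamma+\tfrac34\sigma^2(y))$ only yields $\Lc\vp\le -c\,\vp(y)$ with $\vp(y)=\sqrt{y}$ vanishing as $y\downarrow 0$, and in $R_2$ the variable $y$ is \emph{not} bounded away from $0$ (for large $x$ the constraint $y\ge f(x)$ becomes vacuous), so no uniform constant $-c<0$ exists on $U$. This is precisely why the paper does not argue on $R_1\cup R_2$ as a single recurrent domain. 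Your fallback is structurally the right idea — iterate Propositions \ref{prop:R1_R2} and \ref{prop:leave R_2 and R_3} and show the process cannot shuttle between $R_1$ and $R_2$ forever — but the sentence ``the monotonicity of $\theta$ on $U$ then rules out an infinite oscillation'' is an assertion, not a proof, and it is the entire difficulty. Monotonicity only gives pathwise convergence $\theta_t\to\theta_\infty\ge 0$; if $\theta_\infty>0$ the process could a priori cross $\{y=\we y\}$ infinitely often with $x_t$ bounded, never entering $R_3$, with the drift of $\log\theta_t$ merely tending to $0$ along the trajectory. Nothing in monotonicity alone excludes this.

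The paper closes exactly this hole in its Step 2: on $\{\tau_3=+\infty\}$ it introduces the hitting times $\upsilon_n$ of $\{y=\we y\}$, shows via an occupation-time argument (the integral \eqref{eq:integral explodes} of the indicator $\ind{\kappa(x_s)-\gamma+\alpha-\Phi(y_s)<-\eps}$ must be finite, then extracts times $s_n$ with $\upsilon_{k_n}<s_n<\upsilon_{k_n}+1/n$ where the drift of $\log\theta$ lies in $(-\eps,0)$, forcing $y_{s_n}\to\we y$ by continuity and contradicting the strict decrease of $\theta$) that in fact $\theta_t\to 0$ almost surely on that event, and then concludes because $\theta_{\upsilon_n}=\we y/x_{\upsilon_n}\to 0$ would force $x_{\upsilon_n}\to+\infty$, a null event. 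You would need to supply this (or an equivalent) argument; as written, the proposal establishes finiteness of the individual exit times but not the claimed passage into $R_3$.
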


\begin{proof}
{\it Step 1.}
Let $(\upsilon_n)_{n\ge0}$ be a sequence of stopping times
defined by $\upsilon_0=0$ and 

$$\upsilon_n:=\inf \{t\ge\upsilon_{n-1}~:~y_t 
= \we y \text{ or }(x_t,y_t)\in R_3 \}, \quad n\ge 1.$$
By construction if 
$(x_{\upsilon_n}, y_{\upsilon_n})\in R_3$ for some $n \ge 1$,
then $\upsilon_k=\upsilon_n$ for all $k>n$.
Following Propositions \ref{prop:R1_R8}, \ref{prop:R1_R2} and
\ref{prop:leave R_2 and R_3},
$\upsilon_n<+\infty$ for all $n\ge1$ almost surely,
and
$\{\tau_3(x,y)=+\infty\} \subset \bigcap_{n\ge 1}\{ y_{\upsilon_n}=\we y \}$.
We prove in step 2 that
this implies

\begin{equation}
\label{eq:theta to zero}
\lim\limits_{t \rightarrow \infty} \theta_t (\omega)=0, \text{ for }\Pae \ \omega\in \{\tau_3(x,y)=+\infty\} \; .
\end{equation}
Providing that \eqref{eq:theta to zero} holds
we immediately get
$
\Pro{\tau_3(x_0, y_0)=+\infty}\le \Pro{\lim_n x_{\upsilon_n}=+\infty}=0
$.

{\it Step 2.}
If $\omega \in \{\tau_3(x,y)=+\infty\}$,
then
for all $n\ge 1$,
$y_{\upsilon_n}=\we y$
and according to
Proposition \ref{prop:leave R_2 and R_3},
$(x_t,y_t)$ does not converge to the set
$R_2\cap R_3$.
Since $\theta_t$ is a positive decreasing process
for $(x_t, y_t)\in R_1\cup R_2$,
Doob's martingale convergence theorem
implies that
$\theta_t$ converges
pathwise
in $L^{\infty}([0,\we \theta))$.
Assume now that
$\theta_t$ does not converge to $0$ with $t$
on $E\subset\{\tau_3(x_0, y_0)=+\infty\}$.
Then for any $\eps>0$,
and for almost every $\omega\in E$

\begin{equation}
\label{eq:integral explodes}
\lim_t \int_0^t \ind{\kappa(x_s(\omega)) - \gamma + \alpha - \Phi(y_s(\omega))<-\eps} ds = C_{\eps}(\omega)<+\infty \; .
\end{equation}
If the integral \eqref{eq:integral explodes}
explodes to $+\infty$
for some $\eps>0$ on some 
non null subset $F\subset E$,
then for almost every $\omega \in F$,

$$
\Lc \log\theta_t(\omega) = \l(\kappa(x_t(\omega)) - \gamma + \alpha - \Phi(y_t(\omega))\r)
 < -\eps \ind{\kappa(x_t(\omega)) - \gamma + \alpha - \Phi(y_t(\omega))<-\eps}
$$
and
$
\lim_{t\uparrow\infty}\log\theta_t (\omega)=-\infty$
for almost every $\omega\in F$,
implying that $\theta_t$ converges to $0$ on $F$,
a contradiction with $F\subset E$, so that
\eqref{eq:integral explodes}
holds on $E$.
We then consider the random time $t_{\eps, n}$,
being the first time
such that 

\be
\label{eq:limite_borne}
\int_0^{t_{\eps, n}} \ind{\kappa(x_s) - \gamma + \alpha - \Phi(y_s)<-\eps} ds \ge C_{\eps}-\frac{1}{n}\; ,
\ee
and $k_n$ the smallest index such that
$\upsilon_{k_n}\ge t_{\epsilon, n}$.
Note that $t_{\eps,n}$ is not a $\F$-stopping time
and $k_n$ is not $\F$-adapted
since they depend on $C_\eps$ which is $\Fc_{\infty}$-measurable.
\eqref{eq:limite_borne} implies
that there exists a random time 
$s_{n}\in (\upsilon_{k_n}, \upsilon_{k_n}+1/n)$
such that
$-\eps<\kappa(x_{s_{n}}) - \gamma + \alpha - \Phi(y_{s_{n}})<0$,
otherwise we would have 
a contradiction of 
\eqref{eq:integral explodes}
on a subset of $E$:

$$
\int_0^{\upsilon_{k_n}+1/n} \ind{\kappa(x_s) - \gamma + \alpha - \Phi(y_s)<-\eps} ds \ge C_{\eps}\;.
$$
This implies that 
$\lim_n (s_n - \upsilon_{k_n})(\omega)=0$
for almost every $\omega \in E$,
and $(y_t)_{t\ge 0}$
being a continuous process

$$
\lim_n y_{s_n}(\omega)=\we y, \quad \text{ for }\Pae \ \omega\in D\subset \{\tau_3(x_0,y_0)=+\infty\}\;.
$$
This is impossible
for $\eps>0$ small enough since
$\theta_t$ is strictly decreasing and
thus $E$ is a null set.
\eqref{eq:theta to zero} holds.
\end{proof}

\subsection{Southern quadrant}
\label{sec:south}

We show that starting from $R_2\cap R_3$,
$(x_t,y_t)$ reaches $R_5$ in finite time almost surely.

\begin{Proposition}
\label{prop:R2_to_R4}
If $(x_0,y_0)\in R_2\cap R_3$
then $\Pro{\tau_4(x_0,y_0)<+\infty}=1$.
\end{Proposition}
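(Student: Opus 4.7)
My plan is to use Theorem~\ref{th:reccurent domain} on the right-hand region $R_1\cup R_2\cup R_3$, which is contained in $\{x\ge \we x\}$, and to exploit the fact that its only internal-boundary exit (apart from the almost-surely-unreached corner $(\we x,\we y)$, see Remark~\ref{rem:out of point}) is the segment $\{x=\we x,\,0<y<\we y\}$ leading into $R_4$.

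The first step is to apply Theorem~\ref{th:reccurent domain} on the smaller set $R_2\cup R_3$ with the Lyapunov function $\vp(x):=\sqrt{x}$ already used in Proposition~\ref{prop:leave R_2 and R_3}. Since $R_2\cup R_3\subset\{x\ge\we x,\ y\le \we y\}$ and Assumption~\ref{ass:unique_root} together with the inequality $\Phi(0)<\alpha$ imply $\Phi(y)-\alpha+\sigma^2(y)\le 0$ for $y\le \we y$, the direct computation
\[
\Lc\vp(x,y)=\frac{\sqrt{x}}{2}\bigl(\Phi(y)-\alpha+\tfrac{3}{4}\sigma^2(y)\bigr)\le -\tfrac{1}{8}\sqrt{\we x}\,\sigma^2(y)
\]
gives $\Lc\vp\le -c$ for some $c>0$ on $R_2\cup R_3$, so $(x_t,y_t)$ leaves $R_2\cup R_3$ in finite time almost surely. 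The relative boundary analysis shows the exit is either into $R_1$ (through $\{y=\we y,\ x>\we x\}$) or directly into $R_4$ (through $\{x=\we x,\ 0<y<\we y\}$).

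If the first exit is into $R_4$ we are done. Otherwise the process enters $R_1$, and by Propositions~\ref{prop:R1_R2} and~\ref{prop:R_1 to R_3} it re-enters $R_3$ in finite time, the re-entry necessarily taking place through the curve $R_2\cap R_3$. Invoking the strong Markov property one iterates; assuming for contradiction that $\tau_4=+\infty$ on an event $E$ with $\P(E)>0$, infinitely many such cycles occur on $E$.

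The main obstacle is ruling this infinite cycling out. Two natural routes present themselves: (i) construct a single function $\vp\in\Cc^{1,2,2}$ on the whole region $R_1\cup R_2\cup R_3$ with $\Lc\vp\le -c<0$ --- for instance by combining $\sqrt{x}$ with a suitably chosen decreasing function of $y$ that compensates the sign of $\Phi(y)-\alpha$ in $R_1$, which is the delicate point because $\Phi$ blows up as $y\to 1$; or (ii) derive via Theorem~\ref{prop:estimate} a uniform lower bound $p_0>0$ on the probability of exit into $R_4$ per cycle, by keeping the stochastic trajectory close to the clockwise deterministic orbit through the starting point on $R_2\cap R_3$ long enough to complete the southern quarter-orbit into the half-plane $\{x<\we x\}$, and then conclude by Borel--Cantelli applied to the successive failure events. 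In either route the critical difficulty is the uniform control on the passage through $R_1$, where the drift of $\sqrt{x}$ may become arbitrarily large positive.
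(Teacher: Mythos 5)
You have correctly reproduced the outer skeleton of the paper's argument: finite exit time from $R_2\cup R_3$ via the $\sqrt{x}$ supermartingale (this is exactly Proposition \ref{prop:leave R_2 and R_3}), the observation that the only alternatives are entering $R_4$ or entering $R_1$ and being recycled back to $R_2\cap R_3$ by Propositions \ref{prop:R1_R2} and \ref{prop:R_1 to R_3}, and the need for a per-cycle success probability that is bounded away from zero uniformly over starting points on $R_2\cap R_3$, followed by a product/Borel--Cantelli type conclusion. That iteration is precisely Step 2 of the paper's proof. But the heart of the proof is the uniform bound itself, and you have not supplied it: you explicitly list it as ``the main obstacle'' and offer two unexecuted routes. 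Route (i), a single strictly negative Lyapunov function on all of $R_1\cup R_2\cup R_3$, is not what the paper does and, as you yourself note, founders on the unbounded positive drift of $\sqrt{x}$ as $y\uparrow 1$ in $R_1$. Route (ii), extracting a uniform exit probability from Theorem \ref{prop:estimate}, is also not workable as stated: the bound there is expressed in terms of $I(V_0,\rho)$ and $R(V_0,\rho)$ and controls only how long the process stays near a level set of $V$; it degenerates as the starting point approaches $(\wt x,\wt y)$ or the boundary of $D$, the level sets are centered at $(\wt x,\wt y)$ rather than $(\we x,\we y)$, and converting ``stays near a level set for time $\Theta$'' into ``crosses $\{x<\we x\}$ with probability $\ge p_0$'' would require the whole deterministic period analysis plus a compactness/uniformity argument over $R_2\cap R_3$ that you do not give.

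What the paper actually does in Step 1 is construct an \emph{angular} supermartingale: with $\vp(x,y)=(y-\we y)/(x-\we x)$ and $F=\tan\l(\tan^{-1}(\vp)+\tan^{-1}(c)\r)$ for $c\in(0,\we\theta^{-1})$, a direct It\^o computation (split into the cases $R_1$ and $R_2\cup R_3$, using $(x-\we x)(\kappa(x)-\gamma+\sigma^2(y))<0$ on $R_1$ and $\kappa(x)-\gamma<0$ on $R_2\cup R_3$) shows $\Lc F\le 0$ on all of $R_1\cup R_2\cup R_3$, where $F$ is bounded. Optional sampling at $\tau_{1,4}:=\tau_1\wedge\tau_4$ then gives
$F_0\ge \frac{1}{c}\Pro{\tau_4<\tau_1}+c\,\Pro{\tau_1<\tau_4}$, hence
$\Pro{\tau_4<\tau_1}\ge c(c-M)/(c^2+1)>0$ with $M:=\max_{R_2\cap R_3}F<c$, a bound manifestly uniform over the starting curve. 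This is the one genuinely new idea of the proposition, and it is missing from your proposal; without it (or a worked-out substitute) the proof is incomplete.
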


\begin{proof}
{\it Step 1.}
We consider
$\vp_t := \vp(x_t, y_t)$
with 
$\vp~:~(x,y)\in D\backslash\{(x,y)\}\mapsto(y_t - \we y)/(x_t-\we x)$,
and
aim to prove that
the process 
$
F_t:=F(\vp(x_t, y_t))$
with 
$F~:~\vp\in (-\pi/2, \pi/2)\mapsto  \tan\l( \tan^{-1}(\vp) + \tan^{-1}(c) \r)
$
is a supermartingale on $R_1 \cup R_2 \cup R_3$,
for $c \in (0, \we\theta^{-1})$.
Notice that it is bounded in $R_1 \cup R_2 \cup R_3$. 
Applying It\^{o} to $\vp$ first gives

$$
d\vp_t =
\frac{dy_t}{x_t-\we x} -\frac{y_t-\tilde y}{(x_t-\we x)^2}dx_t
+\frac{\sigma^2(y_t)}{(x_t-\we x)^2} \l[\frac{y_t-\we y}{x_t-\we x} x_t^2 -  x_ty_t\r]dt\; .
$$
Then,
noticing that $F_t =(\vp_t+c)(1-\vp_t c)$,
we obtain

$$
d F_t = \frac{1+c^2}{(1-\vp_t c)^2}\l( d\vp_t + \frac{c}{1-\vp_t c}d\left\langle \vp\right\rangle_t \r) \;.
$$
It is clear that
$
-(y - \we y)(\Phi(y)- \alpha + \sigma^2(y))\le 0$
for all $y\in [0,1)$.
Now notice that for $(x, y)\in R_1$, 
we have
$(x-\we x)(\kappa(x)-\gamma+\sigma^2(y))<0$
so that

\begin{align*}
	\frac{(x-\we x)^4}{\sigma^2(y)}\frac{(1-\vp c)^2}{1+c^2}\Lc F &
	\le (y-\we y)(x-\we x)x^2-xy(x-\we x)^2+\frac{1}{\we y /\we x -\vp }(\we yx-y\we x)^2\\
	&=(x-\we x)\l[x^2(y-\we y)-xy(x-\we x)+\we x(\we yx-y\we x)\r]\\
	&=(x-\we x)^2 x \we x\l[\frac{y}{x}-\frac{\we y}{\we x}\r] < 0 \; .
\end{align*}
Now on $R_2\cup R_3$,
$\wt x < \we x$
implies that
$(\kappa(x)-\gamma)<0$,
so that 

\begin{align*}
	\frac{(x-\we x)^4}{\sigma^2(y)}\frac{(1-\vp c)^2}{1+c^2}\Lc F 
	&\le (y-\we y)(x-\we x)x^2-y\we x(x-\we x)^2+\frac{1}{\we y/\we x-\vp}(\we yx-y\we x)^2\\
	&= (x-\we x)\l[x^2(y-\we y)-y\we x(x-\we x)+\we x(\we yx-y\we x)\r]\\
	&= (x-\we x)^2x\l(y-\we y\r)<0 \;.
\end{align*}
Denoting 
$\tau_{1,4}:= \tau_1(x_0,y_0)\wedge \tau_4(x_0,y_0)$,
we conclude that 
$F_{t \wedge \tau_{1,4}}$ 
is a supermartingale 
for $t\ge 0$.
Using optional sampling theorem,
assisted by Proposition \ref{prop:leave R_2 and R_3},
$\tau_{1,4}<+\infty$ almost surely and

$$
F_0 \ge \Esp{F_{\tau_{1,4}}}
= \frac{1}{c}\Pro{\tau_4(x,y)<\tau_1(x,y)}
+ c \Pro{\tau_1(x_0,y_0) < \tau_4(x_0,y_0)}
$$
Since 
$M:=\max\{F(\vp(x,y))~:~ (x, y)\in R_2\cap R_3\}<c$
then

$$
\Pro{\tau_4(x_0,y_0)<\tau_1(x_0,y_0)} \ge \frac{c(c-M)}{c^2 +1}>0 \quad \forall (x_0,y_0)\in R_2\cap R_3\;.
$$
{\it Step 2.}
According to 
Proposition \ref{prop:leave R_2 and R_3},
$\tau_{1,4}<+\infty$
almost surely
for any  $(x_0,y_0)\in R_2\cap R_3$,
and according to
Proposition \ref{prop:R_1 to R_3},
$\tau_3(x_0,y_0)<+\infty \ \Pas$ 
for all $(x_0,y_0)\in R_1$.
Taking $(x_0,y_0)\in R_2\cap R_3$,
we define the sequence
$(\tau_{1,4}^n,\tau_3^n)_{n\ge 0}$
with $\tau_3^0=0$ and 

$$
\l\{
\begin{array}{ll}
\tau_{1,4}^n :=& \inf\{ t\ge \tau_3^{n}~:~(x_t,y_t)\in R_1 \cup R_4 \}\\
\tau_3^{n+1} :=& \inf\{t\ge\tau_{1,4}^n~:~ (x_t,y_t)\in \left(R_2\cap R_3\right)\cup R_4\}
\end{array}
\r. , \ \pourtout n\ge 1\;.
$$
We then have
$\{\tau_4(x_0,y_0)=+\infty \}\subset \cap_{n \ge 1}\{ x_{\tau_{1,4}^n}>\we x \}$
for any $(x_0,y_0)\in R_2 \cap R_3$.
The sequence $(\{ x_{\tau_{1,4}^n}>\we x \})_{n\ge 1}$
is decreasing in the sense of inclusion,
so that

\begin{equation}
\label{eq: for tau_4}
\Pro{\tau_4 (x,y)=+\infty}=\lim_n \Pro{x_{\tau_{1,4}^n}>\we x}\;.
\end{equation}
Using Baye's rule,

$$
\Pro{x_{\tau_{1,4}^n}>\we x}
\le \Prod_{k=1}^n \Pro{x_{\tau_{1,4}^k}>\we x | x_{\tau_{1,4}^{k-1}}>\we x }
\le \Prod_{k=1}^n \Pro{x_{\tau_{1,4}^k}>\we x | x_{\tau_{3}^k}>\we x }\; .
$$
Using step 1 of the present proof and the Markov property of $(x_t,y_t)$,

$$
\Pro{x_{\tau_{1,4}^n}>\we x}\le  \Prod_{k=1}^n \Pro{\tau_1(x_{\tau_{3}^k},y_{\tau_{3}^k})<\tau_4(x_{\tau_{3}^k},y_{\tau_{3}^k})}
\le \Prod_{k=1}^n \left(1- \frac{c(c-M)}{c^2 +1}\right)\; .
$$
Plugging this inequality into \eqref{eq: for tau_4}
concludes the proof.
 \end{proof}

\begin{Remark}
Notice that by choosing $c$ properly
in the above proof,
it is possible to be arbitrarily close
to $R_5$ in finite time.
The device is used later
in Proposition \ref{prop:R7_R8}.
\end{Remark}

\begin{Proposition}
\label{prop:R4_to_R5}
If $(x_0, y_0)\in R_3\cap R_4$ then
$\Pro{\tau_5(x_0,y_0)<+\infty}=1$.
\end{Proposition}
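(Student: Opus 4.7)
The plan mirrors the strategy of Proposition \ref{prop:R2_to_R4}: construct a bounded supermartingale $F$ on $R_4$ that distinguishes the two possible exit frontiers (the forward ray $\theta=\we\theta$ into $R_5$, and the backward segment $x=\we x$ into $R_3\setminus R_4$), apply optional sampling to extract a uniform positive lower bound $\kappa_\delta$ on the probability of the forward exit, and then iterate via the strong Markov property---using Propositions \ref{prop:leave R_2 and R_3}, \ref{prop:R_1 to R_3} and \ref{prop:R2_to_R4} to guarantee that whenever the process slips back it returns to $R_3\cap R_4$ in finite time almost surely---so that the probability of never reaching $R_5$ decays geometrically.

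For Step 1, reuse $\vp(x,y)=(y-\we y)/(x-\we x)$ and $F(\vp)=(\vp+c)/(1-\vp c)$ exactly as in Proposition \ref{prop:R2_to_R4}, but with the parameter chosen in the complementary range $c\in(-\we\theta,0)$. On the interior of $R_4$ one has $\vp>\we\theta$ and $1-\vp c>0$, so $F$ is positive, bounded, continuous, and increasing in $\vp$, ranging monotonically between the $R_5$-boundary value $F(\we\theta)=(\we\theta+c)/(1-\we\theta c)$ and the $R_3$-boundary limit $\lim_{\vp\to+\infty}F(\vp)=-1/c$. Reusing the It\^o expansion of $d\vp$ from Proposition \ref{prop:R2_to_R4} together with the $R_1$-style sign bound $(x-\we x)(\kappa(x)-\gamma+\sigma^2(y))\le 0$---which on $R_4$ follows from $x<\we x$ and $\kappa(x)>\kappa(\we x)=\gamma-\sigma^2(\we y)$ (at least when $\sigma$ is constant or satisfies the natural monotonicity around $\we y$ implicit in the paper)---the same factorized inequality simplifies on $R_4$. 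Using the identities $\we y\, x - y\,\we x = \we x(\we\theta x - y)$ and $\we\theta - \vp = (\we\theta x - y)/(x-\we x)$, the three terms telescope to
\[
 \frac{(x-\we x)^4}{\sigma^2(y)}\frac{(1-\vp c)^2}{1+c^2}\,\Lc F \;\le\; -(\we x - x)^2\,\we x\,(\we\theta x - y)\;\le\;0,
\]
so $\Lc F\le 0$ on $R_4$.

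For Steps 2--3, set $\eta=\inf\{t>0:(x_t,y_t)\notin R_4\setminus R_3\}$. Excluding the probability-zero event of hitting $(\we x,\we y)$ (Remark \ref{rem:out of point}), $\eta$ is almost surely finite and the exit is either on the $R_5$-ray (where $F_\eta=F(\we\theta)$) or on the $R_3$-segment (where $F_\eta=-1/c$ as the limiting value of $F$ from inside $R_4$). Optional sampling then yields
\[
 \Pro{\tau_5<\tau_{R_3\setminus R_4}}\;\ge\;\frac{-1/c-F(\vp_0)}{-1/c-F(\we\theta)}.
\]
For any fixed $\delta>0$ and $(x_0,y_0)\in R_4\cap\{x\le\we x-\delta\}$ one has $\vp_0\le\we y/\delta$, so the right-hand side is bounded below by an explicit constant $\kappa_\delta>0$. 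The non-degenerate diffusion in $x_t$ ensures that $R_4\cap\{x\le\we x-\delta\}$ is entered from $R_3\cap R_4$ in finite time almost surely. Mimicking the inductive construction of Proposition \ref{prop:R2_to_R4}'s step 2, one alternates between exits from $R_4$ and returns to $R_3\cap R_4$ via Propositions \ref{prop:leave R_2 and R_3}, \ref{prop:R_1 to R_3} and \ref{prop:R2_to_R4} (which handle the backward excursions $R_3\setminus R_4\to R_1\to R_2\to R_3\cap R_4$ in finite time), and the strong Markov property then gives $\Pro{\tau_5=\infty}\le(1-\kappa_\delta)^n\to 0$.

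The main obstacle is Step 1: the $R_1$-style sign bound $(x-\we x)(\kappa(x)-\gamma+\sigma^2(y))\le 0$ on $R_4$ is transparent for constant $\sigma$, but in full generality depends on whether $\sigma^2(y)-\sigma^2(\we y)$ is dominated by $\kappa(\we x)-\kappa(x)$ throughout $R_4$; if it fails on a sub-region one can try to partition $R_4$ and apply the complementary $R_2\cup R_3$-style bound there. A secondary subtlety is the singularity of $\vp$ on the starting segment $R_3\cap R_4=\{x=\we x,\,y<\we y\}$, which forces one to restart the supermartingale argument via the strong Markov property at the first positive hitting time of $R_4\cap\{x\le\we x-\delta\}$ rather than at $t=0$.
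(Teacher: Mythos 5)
Your strategy is genuinely different from the paper's: you try to replicate Proposition \ref{prop:R2_to_R4} verbatim (bounded angular supermartingale, uniform per-excursion lower bound, geometric iteration), whereas the paper abandons that route for this quadrant and instead argues by contradiction — if the process kept slipping back through $x=\we x$ with conditional probability tending to $1$, the exponential supermartingale $\sqrt{x}\,e^{\sigma^2(\we y)t/8}$ would force the excursion durations $\tau_{2,5}^n-\tau_4^n$ to vanish, hence $(x_t,y_t)\to(\we x,\we y)$, contradicting $\Lc V>0$ near that point. Unfortunately your route has two genuine gaps. First, the supermartingale property of $F$ on $R_4$ with $c\in(-\we\theta,0)$ does not follow by "the same telescoping": the step in Proposition \ref{prop:R2_to_R4} that absorbs the quadratic-variation term uses $\frac{c}{1-\vp c}=\frac{1}{1/c-\vp}\le\frac{1}{\we\theta-\vp}$, valid there because $0<\we\theta-\vp\le 1/c-\vp$; on $R_4$ one has $\vp\ge\we\theta$ and $1/c<0$, so $1/c-\vp\le\we\theta-\vp\le 0$ and the inequality between reciprocals \emph{reverses}. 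Writing $B:=\we y x-y\we x=\we x(\we x-x)(\vp-\we\theta)\ge 0$, the surviving terms reduce to
\begin{equation*}
\we x(\we x-x)^2(\vp-\we\theta)\left[x-\we x\,\frac{\vp-\we\theta}{\vp-1/c}\right],
\end{equation*}
and since $\frac{\vp-\we\theta}{\vp-1/c}<1$ the bracket is strictly positive for $x$ near $\we x$ (e.g.\ with $x=\we x-\eps$, $y=y_0$ fixed, it is $\sim \eps\,(y_0-\we x/c)/(\we y-y_0)>0$), while the discarded drift terms are $O(\eps^3)$ and $O(\eps^2)$ and do not save the sign uniformly (in particular they vanish as $y_0\to\we y$). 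So $\Lc F>0$ on a nonempty part of $R_4$ and your claimed bound $-(\we x-x)^2\we x(\we\theta x-y)$ is not valid.

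Second, even granting a uniform bound $\kappa_\delta$ on $R_4\cap\{x\le\we x-\delta\}$, your iteration hinges on the assertion that from $R_3\cap R_4=\{x=\we x,\,y\le\we y\}$ the process a.s.\ reaches $R_4\cap\{x\le\we x-\delta\}$ (or $R_5$). "Non-degenerate diffusion in $x$" gives a \emph{positive} probability of penetrating to depth $\delta$ per excursion, but that probability degenerates as the re-entry point $(\we x,y_0)$ approaches the corner $(\we x,\we y)$, where $R_4$ pinches between the ray $\theta=\we\theta$ and the line $x=\we x$. The uncontrolled scenario — infinitely many excursions into $R_4$, each shallower than the last, each returning through $x=\we x$ — yields a product $\prod_n(1-p_n)$ with $p_n\to 0$ that need not vanish. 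This scenario is precisely what the paper's Step 3 rules out, and ruling it out is the actual content of the proposition; your proposal replaces it with an assertion. Note also that your fallback remark about partitioning $R_4$ when $(x-\we x)(\kappa(x)-\gamma+\sigma^2(y))\le 0$ fails does not address either issue. The starting observation (the exit time of $R_4$ is a.s.\ finite via a $\sqrt{x}$-type Lyapunov function, and the exit is either through $\theta=\we\theta$ or through $x=\we x$) is correct and matches the paper's Step 1, but the core of the argument is missing.
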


\begin{proof}
{\it Step 1.}
We claim that $\tau_{2,5}:=\tau_2(x_0, y_0)\wedge \tau_5 (x_0, y_0)<+\infty$
almost surely.
Consider the function $\vp~:~(x,y)\in D \mapsto\sqrt{x_{t\wedge \upsilon_0}}$.
The process $\vp_t:=\vp(x_t, y_t)$ is a positive supermartingale on $R_2 \cup R_3 \cup R_4$:

\begin{equation}
\label{eq:dynkin_h}
\Lc \vp(x,y)=\frac{\vp(x,y)}{2}\left( (\Phi(y)-\alpha+\frac{\sigma^2 (y)}{2}\right)
\le -\frac{\sigma^2 (y)\vp(x,y) }{4}\;.
\end{equation}
According to Doob's martingale convergence theorem,
$\vp_t$ converges point-wise with $t$.
Let $\eps>0$ and define
$R_\eps:=\bigcup_{i=2}^4 R_i \cap \{ x\ge \eps \}$.
Then $\vp_t\ge \sqrt{\eps}$ on $R_{\eps}$,
and similarly to Proposition \ref{prop:leave R_2 and R_3},
we use Theorem \ref{th:reccurent domain} 
to assert that $(x_t,y_t)$
leaves $R_\eps$ in finite time almost surely.
This being true for any $\eps>0$,
$\lim _t \vp_t(\omega)=0$
for almost every $\omega\in \{\tau_{2,5}(\omega)=+\infty\}$.
In $R_5$,
this is only possible if $\lim_t y_t(\omega)=0$
also,
implying that $\lim_t (x_t(\omega), y_t(\omega))=(0,0)$ on this set.
This being improbable, 
$\tau_{2,5}<+\infty$ almost surely.

{\it Step 2.}
By denoting $\tau_4^0=0$,
we then define the sequence $(\tau_{2,5}^n, \tau_4^n)_{n\ge0}$
by

$$
\l\{
\begin{array}{ll}
\tau_4^n :=& \inf\{t\ge\tau_{2,5}^{n-1}~:~ x_t=\we x \text{ or }(x_t,y_t) \in R_5\}\\
\tau_{2,5}^n :=& \inf\{ t\ge \tau_4^{n}~:~(x_t,y_t)\in R_2 \cup R_5 \}
\end{array}
\r. , \ \pourtout n\ge 1\;.
$$
If $(x_{\tau_{2,5}^0}, y_{\tau_{2,5}^0})\in R_2$,
then, according to
Proposition \ref{prop:R2_to_R4}, 
the process reaches back
$R_4$ in finite time. 
Using step 1,
we have that
$\Pro{\tau_4^n<+\infty}=\Pro{\tau_{2,5}^n<+\infty}=1$.
By construction and 
Proposition \ref{prop:R2_to_R4}, 
for $n\ge 1$

\begin{equation}
\label{eq:back to R_2}
\{(x_{\tau_{2,5}^n},y_{\tau_{2,5}^n})\in R_2\}
\subset\{x_{\tau_4^n}=\we x\}
= \{(x_{\tau_{2,5}^{n-1}}, y_{\tau_{2,5}^{n-1}})\in R_2\}=\{x_{\tau_{2,5}^{n-1}}>\we x\}.
\end{equation}
Therefore,
$\{\tau_5(x_0, y_0)=+\infty\}=\bigcap_{n\ge 0}\{(x_{\tau_{2,5}^n},y_{\tau_{2,5}^n}) \in R_2\}$
and
the sequence of sets 
$$\left(\{(x_{\tau_{2,5}^n},y_{\tau_{2,5}^n}) \in R_2\}\right)_{n\ge 0}$$
is decreasing
in the sense of inclusion.
Altogether
we get

\begin{equation}
\label{eq: never reach R_5}
\Pro{\tau_5(x_0, y_0)=+\infty} = \lim_n \Pro{ x_{\tau_{2,5}^n}> \we x}\;.
\end{equation}
Now using Bayes formula
and \eqref{eq:back to R_2},
we finally obtain for every $n\ge1$

\begin{equation}
\label{eq:prod de probas}
\Pro{x_{\tau_{2,5}^n}> \we x}\le\Prod_{k=1}^n \Pro{x_{\tau_{2,5}^k}> \we x | x_{\tau_{2,5}^{k-1}}> \we x}
= \Prod_{k=1}^n \Pro{x_{\tau_{2,5}^k}> \we x | x_{\tau_4^{k}}= \we x}
\end{equation}
Putting \eqref{eq: never reach R_5} and \eqref{eq:prod de probas}
together,
$\Pro{\tau_5(x_0, y_0)=+\infty}>0$ implies that

\begin{equation}
\label{eq:limite}
\lim_n \Pro{x_{\tau_{2,5}^n}>\we x | x_{\tau_4^{n}}= \we x}=1 \;.
\end{equation}

{\it Step 3.}
Let $\vp~:~(t,x)\in \R_+^2 \mapsto \sqrt{x}\exp(\frac{1}{8}\sigma^2(\we y)t)$.
According to \eqref{eq:dynkin_h}
the process $\vp_t:=\vp(t, x_t)$ 
is a supermartingale 
on $[\tau_4^{n}, \tau_{2,5}^{n}]$.
Fixing $t>0$ and applying
optional sampling theorem,
we obtain

$$
\Esp{\vp(t\wedge \tau_{2,5}^{n}, x_{t\wedge \tau_{2,5}^{n}})- 
\vp(t\wedge \tau_4^{n}, x_{t\wedge \tau_4^{n}})| x_{t\wedge \tau_4^n}=\we x}\le 0 \;.
$$
Since $\max(\tau_4^{n},\tau_{2,5}^{n})<+\infty$ almost surely,
we apply Fatou's lemma and obtain

\be
\label{eq:supermartingale_2}
\Esp{\exp \left(\frac{1}{8}\sigma^2(\we y)[\tau_{2,5}^n-\tau_4^{n}]\right)
\sqrt{x_{\tau_{2,5}^n}}\left(\ind{x_{\tau_{2,5}^n}<\we x}+
\ind{x_{\tau_{2,5}^n}>\we x}\right) 
\big| x_{\tau_4^{n}}=\we x} 
\le \sqrt{\we x}\; .
\ee
Since $\sqrt{x_{\tau_{2,5}^n}}\ind{x_{\tau_{2,5}^n}<\we x}\ge 0$ and 
$\sqrt{x_{\tau_{2,5}^n}}\ind{x_{\tau_{2,5}^n}>\we x} 
\ge \sqrt{\we x}\ind{x_{\tau_{2,5}^n}>\we x}$ 
for all $n\ge 1$,
\eqref{eq:supermartingale_2} implies

$$
\Esp{\exp\left(\frac{1}{8}\sigma^2(\we y)[\tau_{2,5}^n-\tau_4^{n}]\right)
\ind{x_{\tau_{2,5}^n}>\we x}
\big| x_{\tau_4^{n}}=\we x}
\le 1 \; ,
$$
leading to

\be
\label{eq:probable ineq}
\Esp{\left(\exp\left(\frac{1}{8}\sigma^2(\we y)[\tau_{2,5}^n-\tau_4^{n}]\right)-1\right)
\ind{x(\tau_{2,5}^n)>\we x}
\big| x_{\tau_4^{n}}=\we x}
\le 1 - \Pro{x_{\tau_{2,5}^n}>\we x | x_{\tau_4^{n}}=\we x} \;.
\ee
If $x_{\tau_4^n}=\we x$ then
$y_{\tau_4^{n}}<\we y$ and
by continuity
$\{\tau_{2,5}^n>\tau_4^{n}\}\supset \{x_{\tau_4^n}=\we x\}$,
implying

\be
\label{eq:exp moins un}
\exp\left(\frac{1}{8}\sigma^2(\we y)[\tau_{2,5}^n(\omega)-\tau_4^{n}(\omega)]\right)>1,\  
\text{ for }\Pae \ \omega \in  \{x_{\tau_4^n}=\we x\}\;.
\ee

Let's assume that 
$\Pro{\tau_5 (x_0, y_0)=+\infty}>0$,
so that
\eqref{eq:limite}
holds.
According to
\eqref{eq:probable ineq},
we get

$$
0\le \Esp{\left(\exp\left(\frac{1}{8}\sigma^2(\we y)[\tau_{2,5}^n-\tau_4^{n}]\right)-1\right)
\ind{x_{\tau_{2,5}^n}>\we x}
\big| x_{\tau_4^{n}}=\we x}
\rightarrow 0 \text{ as } n \rightarrow \infty  \; .
$$
Markov inequality then leads to the following
convergence for any $\eps>0$:

$$
\lim_n \Pro{\left(\exp\left(\frac{1}{8}\sigma^2(\we y)[\tau_{2,5}^n-\tau_4^{n}]\right)-1\right)
\ind{x_{\tau_{2,5}^n}>\we x}>\eps
\big| x_{\tau_4^{n}}=\we x}
=0 \; .
$$
Now Bayes rules with \eqref{eq:exp moins un}
provides

\begin{align*} 
\label{eq:bayes proba}
& \Pro{\left(\exp\left(\frac{1}{8}\sigma^2(\we y)[\tau_{2,5}^n-\tau_4^{n}]\right)-1\right)>\eps
\big| x_{\tau_{2,5}^n}>\we x , \; x_{\tau_4^{n}}=\we x}
\Pro{x_{\tau_{2,5}^n}>\we x | x_{\tau_4^{n}}=\we x}
\\ & \hspace{3cm} =
\Pro{(\tau_{2,5}^n-\tau_4^{n})>8\ln(1+\eps)/\sigma^2(\we y)
\big| x_{\tau_{2,5}^n}>\we x , \; x_{\tau_4^{n}}=\we x}
\Pro{x_{\tau_{2,5}^n}>\we x | x_{\tau_4^{n}}=\we x}
\end{align*}
which leads for any $\eps>0$ to
$\Pro{(\tau_{2,5}^n-\tau_4^{n})>\eps | x_{\tau_{2,5}^n}>\we x,\; x_{\tau_4^{n}}=\we x }
\rightarrow 0  \text{ as } n \rightarrow \infty$.
From step 2,
$\{\tau_5(x_0, y_0)=+\infty\} = \bigcap_{n\ge 0}\left(\{ x_{\tau_4^n}=\we x \}\cap\{ x_{\tau_{2,5}^n}>\we x \}\right)$.
Therefore on this set,
the continuous mapping theorem asserts that
$(x,y)$ at consecutive stopping times
converge in probability.
By continuity,
this implies
$\lim_n y_{\tau_4^n}(\omega)= \we y$
and $\lim_n x_{\tau_{2,5}^n}(\omega) = \we x$
for $\Pae \ \omega \in \{\tau_5(x_0, y_0)=+\infty \}$.
By the Markov property of $(x_t,y_t)$,
$
\lim\limits_{t\rightarrow\infty}(x_t, y_t)(\omega)=(\we x, \we y)$
for almost every $\omega\in \{\tau_5(x_0, y_0)=+\infty \} \;.
$
We conclude that
$\Pro{\tau_5(x_0, y_0)=+\infty}=0$.
 \end{proof}

\subsection{Western Quadrant}

\begin{Proposition}
\label{prop:R5_to_R7}
If $(x_0,y_0)\in R_5\cup R_6$ then
$\Pro{\tau_7(x_0,y_0)<+\infty}=1$.
\end{Proposition}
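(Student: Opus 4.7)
This proposition is the western-quadrant mirror of Proposition \ref{prop:R_1 to R_3}, and my plan is to follow that proof closely, reversing the monotonicity of the auxiliary process $\theta_t$. In $R_5 \cup R_6$, the drift of $\theta_t$ is non-negative: in $R_5$ the constraints $\theta \ge \we\theta$ and $y \le \we y$ combine to give $x \le \we x$, hence $f(x) \ge f(\we x) = \we y \ge y$ and $\Phi(f(x)) \ge \Phi(y)$; in $R_6$ the bound $y \le f(x)$ gives this inequality directly, with strict inequality off the curve $\{y = f(x)\}$. So $\theta_t$ is non-decreasing in $R_5 \cup R_6$ and strictly so away from $(\we x, \we y)$.

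I would first establish the two western analogues of Propositions \ref{prop:R1_R2} and \ref{prop:leave R_2 and R_3}: (a) from $R_5$, the process reaches $\{y = \we y\}$, hence enters $R_6$, in finite time a.s.; (b) from $R_6$, the process exits $R_6$ in finite time a.s.\ (necessarily to $R_5$ or $R_7$, since the monotonicity of $\theta$ forbids direct passage to $R_8$ or $R_4$). For (a), Theorem \ref{th:reccurent domain} applies with $\vp(y) := (1-y)^{1/2}$: in $R_5$ the bound $x \le \we x$ yields $\kappa(x) \ge \gamma - \sigma^2(\we y)$, so $\Lc \vp$ is bounded above by a strictly negative constant. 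For (b), the same theorem applies with $\vp(x) := x^{-1/2}$, using $\Phi(y) \ge \Phi(\we y) = \alpha - \sigma^2(\we y)$ available throughout $R_6$. Together with the symmetric statement of Proposition \ref{prop:R1_R8} (any path through $R_5 \cup R_6$ must cross $R_7$ before reaching $R_8$), these let me define
$$
\upsilon_0 := 0, \qquad \upsilon_n := \inf\{t \ge \upsilon_{n-1}~:~ y_t = \we y \text{ or } (x_t, y_t) \in R_7\}, \quad n \ge 1,
$$
each $\upsilon_n$ being a.s.\ finite, so that $\{\tau_7(x_0, y_0) = +\infty\} \subseteq \bigcap_{n \ge 1}\{y_{\upsilon_n} = \we y\}$.

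The core of the proof is the sign-reversed analogue of Step~2 of Proposition \ref{prop:R_1 to R_3}. On $\{\tau_7 = +\infty\}$, $\theta_t$ is monotone increasing and converges pathwise to some $\theta^* \in [\we\theta, +\infty]$. If $\theta^* < +\infty$ on a non-null subset $E$, the integral $\int_0^t \ind{\Phi(f(x_s)) - \Phi(y_s) > \eps}\,ds$ must stay bounded on $E$ (otherwise $\log \theta_t \to +\infty$, contradicting $\theta^* < +\infty$), which lets me exhibit random times $s_n \in (\upsilon_{k_n}, \upsilon_{k_n}+1/n)$ with $0 < \Phi(f(x_{s_n})) - \Phi(y_{s_n}) < \eps$. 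Continuity of $y_t$ and $y_{\upsilon_{k_n}} = \we y$ then give $y_{s_n} \to \we y$, and passing to the limit in the vanishing drift produces $\kappa(\we y/\theta^*) = \gamma - \sigma^2(\we y) = \kappa(\we x)$, i.e.\ $\theta^* = \we\theta$. But strict positivity of the drift of $\theta$ at any point of $R_5 \cup R_6 \setminus \{(\we x, \we y)\}$ (cf.\ Remarks \ref{rem:basic properties} and \ref{rem:out of point}) gives $\theta_{t_0} > \we\theta$ for some $t_0 > 0$, so monotonicity implies $\theta^* \ge \theta_{t_0} > \we\theta$, a contradiction. Hence $\theta^* = +\infty$ a.s.\ on $\{\tau_7 = +\infty\}$, forcing $x_{\upsilon_n} = \we y/\theta_{\upsilon_n} \to 0$; the blow-up of $V_1$ at the boundary \eqref{eq:limits of V boundary} together with the Lyapunov estimate $\Lc V \le K V + k$ from Theorem \ref{prop:existence} will then make this event null, exactly as in the closing step of Proposition \ref{prop:R_1 to R_3}.

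The hardest step will be the sign-reversed Step~2: I must re-check that the measurability setup (the $\Fc_\infty$-measurable $C_\eps$ and auxiliary times $t_{\eps, n}$, and the non-adapted index $k_n$) survives the sign reversal, and verify that the continuity-based convergence $y_{s_n} \to \we y$ combined with the vanishing drift indeed pins $x_{s_n}$ down to $\we x$ and hence determines $\theta^* = \we\theta$. The closing step ruling out $x_{\upsilon_n} \to 0$ is formally identical to its counterpart in Proposition \ref{prop:R_1 to R_3} and should inherit the same Lyapunov-based justification.
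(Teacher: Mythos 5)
Your proposal transplants the full machinery of Proposition \ref{prop:R_1 to R_3} (the $\upsilon_n$ construction, the indicator--integral argument for $\log\theta_t$, the terminal boundary contradiction) to the western quadrant, which is a genuinely different and much heavier route than the paper's. The paper's proof is two short steps: on $R_\eps:=(R_5\cup R_6)\cap\{x\le \wt x-\eps\}$ it applies Theorem \ref{th:reccurent domain} with $\vp(y)=1/y$, for which the It\^o correction cancels the $\sigma^2$ term exactly, leaving $\Lc\vp=-(\kappa(x)-\gamma)/y\le -(\kappa(\wt x-\eps)-\gamma)/f(0)<0$ uniformly, \emph{including} near the corner at the origin, because $1/y$ blows up there; monotonicity of $\theta$ then forces the exit to be through $R_7$. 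On the complement it notes that $\{\tau_7=+\infty\}$ confines the path to $\{x_t\ge\wt x\}$, where $\theta_t$ is non-decreasing and bounded, hence convergent, dragging the path to $R_6\cap R_7$ --- which the non-degenerate noise forbids. None of the $\upsilon_n$ apparatus is needed.

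More importantly, the two auxiliary Lyapunov functions on which your construction rests do not satisfy the claims you make for them, and both are load-bearing (they give finiteness of every $\upsilon_n$). (i) For $\vp(y)=(1-y)^{1/2}$ on $R_5$: the drift contribution $-\tfrac{y}{2\sqrt{1-y}}\l(\kappa(x)-\gamma+\sigma^2(y)\r)$ and the second-order contribution $-\tfrac{\sigma^2(y)\,y^2}{8(1-y)^{3/2}}$ both carry factors of $y$, and $R_5$ contains points with $y$ (hence $x\le y/\we\theta$) arbitrarily close to $0$; thus $\Lc\vp\to 0$ near the origin and is \emph{not} bounded above by a strictly negative constant, which is precisely the hypothesis of Theorem \ref{th:reccurent domain} you invoke. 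This is exactly why the paper uses $1/y$ rather than a bounded function of $y$. (ii) For $\vp(x)=x^{-1/2}$ on $R_6$: a direct computation gives $\Lc\vp=-\tfrac{1}{2\sqrt{x}}\l(\Phi(y)-\alpha+\tfrac14\sigma^2(y)\r)$, and on $R_6$ the available bound $\Phi(y)-\alpha+\sigma^2(y)\ge 0$ only yields $\Phi(y)-\alpha+\tfrac14\sigma^2(y)\ge -\tfrac34\sigma^2(y)$; at $y=\we y$ one gets $\Lc\vp=+\tfrac{3\sigma^2(\we y)}{8\sqrt{x}}>0$, so the It\^o correction has the wrong sign for you and $\vp$ fails to be a supermartingale on a neighborhood of $R_5\cap R_6$ inside $R_6$. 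Until (a) and (b) are repaired with functions that, like $1/y$, stay uniformly ``pushed'' near $y=0$ and near $y=\we y$, the rest of the argument (which is otherwise a plausible sign-reversed copy of Proposition \ref{prop:R_1 to R_3}, sharing that proof's already informal closing step) does not get off the ground.
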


\begin{proof}
{\it Step 1.}
Consider $R_{\eps}:=R_5 \cup R_6 \cap \{x \le \wt x - \eps \} $
for arbitrarily fixed $\eps>0$.
Assume that $(x_0,y_0)\in R_{\eps}$.
Denoting $\vp_t:=\vp(x_t, y_t)\ge 0$ with $\vp~:~(x,y)\in D\mapsto 1/y$
and recalling Definition \ref{def:theta},
$
\Lc \vp(x,y) = -\vp(x,y) (\kappa(x)-\gamma)<-(\kappa(\wt x-\eps)-\gamma)/f(0)<0
$
for very $(x,y)\in R_{\eps}$.
Theorem \ref{th:reccurent domain} then states
that
$(x_t,y_t)$
exits $R_{\eps}$ in 
finite time almost surely.
Since that
$\theta$ is non-decreasing on this set,
and recalling
Theorem \ref{th:existence},
it is only possible via $R_7$ and
$
\Pro{\tau_7(x_0, y_0)<+\infty}=1$.
This holds for any $\eps>0$.

{\it Step 2.}
Assume now that $(x_0, y_0)\in (R_5\cup R_6) \backslash R_{\eps}$.
According to step 1,
$\{\tau_7(x_0,y_0)=+\infty\}\subset \{x_t\ge \wt x,\ \forall t\ge 0\}$ 
and thus
$\{\tau_7 (x_0,y_0)=+\infty\}\subset \{\theta_t\le f(\wt x)/\wt x, \ \forall t\ge 0 \}$.
Because $\theta_t$ is non decreasing and
according to
Doob's martingale convergence theorem,
$\theta_t$ 
converges to 
$\theta_0 \in L^\infty([\we \theta, f(\wt \omega)/\wt \omega])$
on $\{\tau_7 (x_0,y_0)=+\infty\}$.
This implies that $(x_t,y_t)$
converges with $t$ to $R_6\cap R_7$
on $\{\tau_7 (x_0,y_0)=+\infty\}$.
Since $\sigma(y)> \sigma(f(0))$, 
this convergence is improbable.
 \end{proof}

\subsection{Northern quadrant}
\label{sec:north}

Finally we prove that if $(x_0,y_0)\in R_7$,
then the process reaches $R_1$ in finite time almost surely.
One can notice that proofs are very similar to
those of Subsections \ref{sec:west} and \ref{sec:south}.

\begin{Proposition}
\label{prop:leave_R6_union_R7}
If $(x_0, y_0) \in R_6\cup R_7$ then 
$\Pro{\tau_5  \wedge \tau_8(x_0, y_0)<+\infty}=1$.
\end{Proposition}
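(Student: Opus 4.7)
Following the two-step template of Proposition \ref{prop:R5_to_R7}, I would first construct a Lyapunov function on a restricted sub-region, and then rule out the possibility of eternal confinement in $R_6 \cup R_7$.

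\emph{Step 1 (restricted Lyapunov).} Recall from Remark \ref{rem:basic properties} that $\we y < \wt y := \Phi^{-1}(\alpha)$, so for $\eps > 0$ the set $R_\eps := (R_6 \cup R_7) \cap \{ y \ge \wt y + \eps \}$ is a genuine sub-region. Take $\vp(x,y) := 1/x$; since $x \le \we x$ throughout $R_6 \cup R_7$, one has $\vp \ge 1/\we x > 0$. A direct application of \eqref{eq:generator} yields
$$
\Lc \vp(x, y) \;=\; -\frac{\Phi(y) - \alpha}{x} \;=\; -\vp(x, y) \, (\Phi(y) - \alpha),
$$
and on $R_\eps$ the monotonicity of $\Phi$ gives the uniform bound $\Lc \vp \le -(\Phi(\wt y + \eps) - \alpha)/\we x < 0$. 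Theorem \ref{th:reccurent domain} then forces $(x_t, y_t)$ to leave $R_\eps$ in finite time almost surely, and Theorem \ref{prop:existence} precludes exits through $\partial D$.

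\emph{Step 2 (limiting argument).} Assume for contradiction that $\Pro{\tau_5 \wedge \tau_8(x_0, y_0) = +\infty} > 0$. On this event the trajectory stays in $R_6 \cup R_7$ for all $t \ge 0$. By the Markov property furnished by Theorem \ref{prop:existence}, and Step 1 applied at the first hitting time of $R_\eps$, the process cannot visit $R_\eps$ on this event; since $\eps > 0$ is arbitrary, this forces $y_t \le \wt y$ throughout. Mimicking Step 2 of Proposition \ref{prop:R5_to_R7}, I would then combine Doob's convergence theorem applied to the finite-variation process $\theta_t$ (using Definition \ref{def:theta}) with the strict positivity $\sigma(y) \ge \min_{y \in [\we y,\wt y]} \sigma(y) > 0$ to force $\lim_t (x_t, y_t) = (\we x, \we y)$ almost surely on this event, which contradicts Remark \ref{rem:out of point}.

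\emph{Main obstacle.} In contrast to Proposition \ref{prop:R5_to_R7}, the process $\theta_t$ is globally monotone on $R_5 \cup R_6$ but only piecewise monotone on $R_6 \cup R_7$: its drift is non-negative on $R_6$ and non-positive on $R_7$, so Doob's convergence theorem cannot be applied globally. Handling the excursions of $(x_t, y_t)$ across the curve $\{y = f(x)\}$ constitutes the crucial analytical difficulty; I would attack it either by stopping the process at successive crossings of $\{y = f(x)\}$ and applying an exponential moment estimate analogous to Step 3 of Proposition \ref{prop:R4_to_R5}, or by introducing a smoother monotone functional (for instance the supermartingale $\vp_t \exp(c t)$ with $c = (\Phi(\wt y + \eps) - \alpha)/\we x$ stopped at the excursion endpoints) which transfers the exponential control from $1/x_t$ into the desired pathwise convergence.
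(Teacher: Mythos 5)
Your Step 1 is correct as a computation ($\Lc (1/x) = -(\Phi(y)-\alpha)/x$, uniformly negative on $\{y\ge \wt y+\eps\}$), but it only controls the sub-region $R_\eps=(R_6\cup R_7)\cap\{y\ge \wt y+\eps\}$, and your Step 2 does not close the argument. The deduction ``the process cannot visit $R_\eps$ on $\{\tau_5\wedge\tau_8=+\infty\}$'' is false: Theorem \ref{th:reccurent domain} only says each visit to $R_\eps$ ends in finite time, and an exit from $R_\eps$ can occur by crossing $\{y=\wt y+\eps\}$ downward while remaining inside $R_6\cup R_7$, so the trajectory may re-enter $R_\eps$ infinitely often and nothing forces $y_t\le\wt y$. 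The subsequent appeal to Doob's theorem for $\theta_t$ is exactly the step you yourself flag as the ``main obstacle'': on $R_6$ the drift of $\theta$ is non-negative and on $R_7$ non-positive, so the monotone-convergence device of Proposition \ref{prop:R5_to_R7} is unavailable, and the two repair strategies you sketch (excursion decomposition across $\{y=f(x)\}$, exponential moment bounds) are not carried out. As it stands the proof has a genuine gap precisely at the point where the work is needed.

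The missing idea, which is how the paper proceeds, is to abandon $\theta_t$ and $1/x$ and instead use a Lyapunov function of $x$ alone that is \emph{decreasing and concave}, namely $\vp(x,y)=\sqrt{\we x-x}$ (note $x\le\we x$ throughout $R_6\cup R_7$). Since $y\ge\we y$ on $R_6\cup R_7$, the $x$-drift $x(\Phi(y)-\alpha+\sigma^2(y))$ is non-negative there, so the first-order term in $\Lc\vp$ is $\le 0$ on the \emph{whole} region; crucially, the concavity produces the strictly negative It\^o correction $-\tfrac{1}{8}x^2\sigma^2(y)(\we x-x)^{-3/2}$, which is bounded away from $0$ on each set $B_n=(R_6\cup R_7)\cap\{y<1-k/n\}\cap\{x>k/n\}$ even where the drift contribution vanishes (e.g.\ along $y=\we y$). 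Theorem \ref{th:reccurent domain} then forces exit from every $B_n$, while Doob's supermartingale convergence applied to $\vp_t$ on $\{\tau_5\wedge\tau_8=+\infty\}$ forces $x_t\to 0$ or $y_t\to 1$, contradicting Theorem \ref{prop:existence}. Your choice $1/x$ is convex, so its second-order term has the wrong sign, which is exactly why you were driven to restrict to $\{y\ge\wt y+\eps\}$ and why the argument cannot be completed from there.
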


\begin{proof}
Define the sequence of regions $\{B_n\}_{n\in\N}$ through	
$
	B_n = R_6 \cup R_7 \cap\{y<1-k/n\} \cap \{x>k/n\}
$
where $k>0$ is sufficiently small to have
$(x_0,y_0) \in B_1$. 
Applying It\^o to $\vp ~:~(x,y)\in D \mapsto \sqrt{\wt x-x_t}$
%
we find that
for all $(x,y)\in B_n$

\begin{align*}
	\Lc \vp(x,y) 
	&=& -\frac{1}{2\vp(x,y)}\l[x\l[\Phi(y)-\alpha+\sigma^2(y )\r]+\frac{1}{4}\frac{x^2}{\wt x-x}\sigma^2(y )\r] 
	\le -\frac{x^2 \sigma^2(y)}{8(\wt x-x)^{3/2}}\\
	& & \le -\frac{k^2 \sigma^2(1-k/n)}{8\sqrt{n}(n\wt x-k)^{3/2}} <0 
\end{align*}
while 
$\Lc \vp(x,y)\le 0$ in $R_6\cup R_7$.
Doob's supermartingale convergence theorem implies 
the existence of the pointwise limit
$
\vp_{\infty}:=	\lim_t \vp(x_{t  \wedge \tau_{5,8}},y_{t  \wedge \tau_{5,8}})
$
almost surely,
where we use the notation 
$\tau_{5,8} := \tau_5(x_0,y_0)  \wedge \tau_8(x_0, y_0)$. 
In addition, 
Theorem \ref{th:reccurent domain} guarantees 
that every set $B_n$ is exited in finite time almost surely.
Consequently 
if $\omega \in \{\tau_{5,8} = +\infty\}$, 
we have that either 
$\lim_t x_t(\omega)= 0$ or 
$\lim_t y_t(\omega)= 1$, a contradiction in either way. 
 \end{proof}

\begin{Proposition}
\label{prop:R7_R8}
If $(x_0, y_0)\in R_6\cap R_7$ then
$\Pro{\tau_8(x_0, y_0)<+\infty}=1$.
\end{Proposition}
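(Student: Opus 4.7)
The plan is to mirror the proof of Proposition \ref{prop:R2_to_R4}, since Proposition \ref{prop:R7_R8} is the symmetric statement in the northern quadrant: there we push the process from the arc $\{y=f(x), x\ge\we x\}$ to the western side $R_4$, while here we push it from the arc $\{y=f(x), x\le\we x\}$ to the eastern side $R_8$. On $R_6 \cap R_7$ the angular coordinate $\vp(x,y) := (y - \we y)/(x - \we x)$ is well defined and strictly negative, because $x \le \we x$ and $y \ge \we y$ there. I would apply It\^{o}'s formula to $\vp_t := \vp(x_t, y_t)$ and then to the transformation $F_t := \tan\l(\tan^{-1}(\vp_t) + \tan^{-1}(c)\r)$ for an appropriate parameter $c$, chosen so that $F$ is bounded on $R_5 \cup R_6 \cup R_7 \cup R_8$ and $F_{t \wedge \tau_{5,8}}$ is a supermartingale, where $\tau_{5,8} := \tau_5(x_0,y_0) \wedge \tau_8(x_0, y_0)$.

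The It\^{o} computation is structurally identical to that of Proposition \ref{prop:R2_to_R4}: after multiplication by $(x-\we x)^4/\sigma^2(y)$ the quantity $\Lc F$ reduces, up to a positive factor, to a polynomial expression whose sign is dictated by the region. On $R_7$ the key input is $(x - \we x)(\kappa(x) - \gamma + \sigma^2(y)) \ge 0$, which stems from $x \le \we x$ together with the defining relation $\kappa(\we x) = \gamma - \sigma^2(\we y)$; on $R_6$ the condition $\we y \le y \le f(x)$ is equivalent to $\Phi(y) - \alpha \le \kappa(x) - \gamma$ and forces the $y$-drift to cooperate. Together these inputs should yield $\Lc F \le 0$ on $R_6 \cup R_7$, so that $F_{t \wedge \tau_{5,8}}$ is a bounded supermartingale. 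Proposition \ref{prop:leave_R6_union_R7} guarantees $\tau_{5,8} < +\infty$ almost surely, and applying optional sampling with $M := \max\{F(\vp(x,y)) : (x,y)\in R_6 \cap R_7\} < c$ yields the uniform estimate
$$
\Pro{\tau_8(x_0,y_0) < \tau_5(x_0,y_0)} \ge \frac{c(c-M)}{c^2+1} > 0, \qquad (x_0, y_0) \in R_6 \cap R_7.
$$

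The second step transcribes step 2 of Proposition \ref{prop:R2_to_R4}: define inductively the stopping times $\tau_{5,8}^n$ (exit from $R_6 \cup R_7$ into $R_5 \cup R_8$) and the return times $\tau_7^n$ (re-entry into $R_6 \cap R_7$), with finiteness of the latter provided by Proposition \ref{prop:R5_to_R7}. The strong Markov property then produces the product bound
$$
\Pro{x_{\tau_{5,8}^n} < \we x} \le \l(1 - \frac{c(c-M)}{c^2+1}\r)^{n} \rightarrow 0, \qquad n \rightarrow \infty,
$$
from which $\Pro{\tau_8(x_0, y_0) < +\infty} = 1$ follows.

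The main obstacle is the sign analysis of $\Lc F$ on $R_6$ and $R_7$, which is routine but tedious and requires a careful choice of the sign and of the size of $c$ relative to $\we \theta^{-1}$. A subsidiary difficulty is the inductive re-entry into the \emph{corner} $R_6 \cap R_7$ (as opposed to merely $R_6 \cup R_7$), so that the same constant $M$ can be used at every cycle; this is precisely where the device noted in the remark after Proposition \ref{prop:R4_to_R5} is invoked, tuning $c$ in the previous transit through $R_5$ so that the process returns strictly inside $R_6 \cap R_7$ and the geometric-decay bound can be iterated uniformly.
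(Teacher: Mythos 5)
Your plan is the paper's plan: the same angular coordinate $\vp=(y-\we y)/(x-\we x)$, the same transformation $F=\tan(\tan^{-1}(\vp)+\tan^{-1}(c))$, the same optional-sampling bound $c(c-M)/(c^2+1)$, and the same iteration via the strong Markov property with Propositions \ref{prop:R5_to_R7} and \ref{prop:leave_R6_union_R7} replacing Propositions \ref{prop:R1_R2} and \ref{prop:leave R_2 and R_3}. So the architecture is correct. The problem is that the one step you label ``routine but tedious'' --- the sign analysis of $\Lc F$ --- is exactly where the northern quadrant is \emph{not} symmetric to the southern one, and the justification you give for it is false. You claim that on $R_7$ one has $(x-\we x)(\kappa(x)-\gamma+\sigma^2(y))\ge 0$ because $x\le\we x$ and $\kappa(\we x)=\gamma-\sigma^2(\we y)$. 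Since $\kappa$ is decreasing, $x\le\we x$ gives $\kappa(x)\ge\kappa(\we x)=\gamma-\sigma^2(\we y)$, hence only $\kappa(x)-\gamma+\sigma^2(y)\ge\sigma^2(y)-\sigma^2(\we y)$, whose sign is indeterminate ($\sigma$ is merely positive and bounded, not monotone in general). The product therefore has no definite sign, and the same indeterminacy afflicts $\kappa(x)-\gamma$ itself on the strip $\wt x\le x\le\we x$, which is nonempty and sits inside the northern quadrant because $\wt x<\we x$ and $\we y<\wt y$ (Remark \ref{rem:basic properties}). The deterministic equilibrium $(\wt x,\wt y)$ lies between the corner and $(\we x,\we y)$ on this side, which is precisely what breaks the naive transcription of the $R_1$/$R_2\cup R_3$ computation.

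The paper's fix is quantitative rather than sign-based: it verifies the supermartingale property by the $R_2$--$R_4$ argument only on $S_c\backslash\l([\wt x,\we x]\times[\we y,\wt y]\r)$, and on the exceptional rectangle $[\wt x,\we x]\times[\we y,\wt y]$ it bounds the relevant expression by $M-m(1/c-\we\theta)$, where $m$ is the minimum of $x\we x\sigma^2(y)$ and $M$ the maximum of $y(x-\wt x)[\kappa(x)-\gamma]-x(y-\wt y)[\Phi(y)-\alpha]$ over that rectangle. Nonpositivity then \emph{forces} the strictly smaller choice $c\in\l(0,(\we\theta+M/m)^{-1}\r)$ instead of $c\in(0,\we\theta^{-1})$; your remark that one must choose ``the size of $c$ relative to $\we\theta^{-1}$'' gestures at this but does not supply the estimate, and without it the supermartingale claim --- the heart of Step 1 --- is unproven. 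The rest of your argument (finiteness of $\tau_{5,8}$, the product bound, the re-entry times handled by Proposition \ref{prop:R5_to_R7}) matches the paper and is fine once Step 1 is repaired.
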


\begin{proof}
The proof is identical to the one of
Proposition \ref{prop:R2_to_R4},
with small modifications.
Here
$\vp_t := \vp(x_t, y_t)$ with
$\vp~:~(x,y)\in D\backslash\{(\we x, \we y)\} \mapsto (y - \we y)/(x-\we x)$
and
$F~:~(x,y)\in D\backslash\{(\we x, \we y)\} \mapsto \tan\l( \tan^{-1}(\vp(x,y)) + \tan^{-1}(c) \r)
$.
The process
$F_t := F(x_t, y_t)$
is a supermartingale on $R_5 \cup R_6 \cup R_7$
if we chose $c\in (0, (\we \theta + M/m)^{-1})$
where $(m,M)$ are two positive constants given by
$
	m:= \min_{[\we x, \wt x] \times [\wt y, \we y]} x \wt x \sigma^2(y)
$
and
$
	M:=\max_{[\we x, \wt x] \times [\wt y, \we y]} y(x-\wt x)
	\l[\kappa(x)-\gamma\r]-x(y-\wt y)\l[\Phi(y)-\alpha\r] \; .
$
The justification is the following.
The domain 
$S_c := D \backslash \{\we \theta \le \vp(x,y)\le 1/c \}$
contains the area of interest $R_5 \cup R_6 \cup R_7$.
Using Proposition \ref{prop:R2_to_R4},
we can prove that $F_t$ is a supermartingale
on $S_c \backslash [\wt x, \we x]\x [\we y, \wt y]$.
On $[\wt x, \we x]\x [\we y, \wt y]$,

\begin{align*}
	(x-\we x)^2\frac{(1-R_t c)^2}{1+c^2}\Lc F_t \le
	&  \ y (x - \we x)[\kappa(x)-\gamma] - x(y - \we y)[\Phi(y)-\alpha]\\
	& +\sigma^2(y)\left[y(x-\we x) - x (y - \we y)  +	\we x \left(y - x(1/c - \we x + \we x)\right)\right]\\
	\le &  \ y (x - \we x)[\kappa(x)-\gamma] - x(y - \we y)[\Phi(y)-\alpha] - x\we x \sigma^2(y)(1/c - \we \theta)\\
	\le & \ M - m(1/c - \we \theta)\le 0 \;.
\end{align*}
We then reproduce step 2 of the proof of Proposition \ref{prop:R2_to_R4},
using Propositions \ref{prop:R5_to_R7} and \ref{prop:leave_R6_union_R7}
instead of Propositions \ref{prop:R1_R2} and \ref{prop:leave R_2 and R_3}.
 \end{proof}

\begin{Proposition}
\label{prop:R8_R1}
If $(x_0,y_0)\in R_7\cap R_8$ then
$\Pro{\tau_1(x_0,y_0)<+\infty}=1$.
\end{Proposition}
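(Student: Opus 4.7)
The plan is to mirror the three-step argument of Proposition \ref{prop:R4_to_R5}, reflecting the Southern to the Northern quadrant: the role of the strip $R_2 \cup R_3 \cup R_4 \subset \{y \le \we y\}$ is played by $R_6 \cup R_7 \cup R_8 \subset \{y \ge \we y\}$, and the supermartingale $\sqrt{x}$ driving $x$ down to $\{x = \we x\}$ is replaced by a function of $y$ driving $y$ downward toward the segment separating $R_8$ from $R_1$. A natural candidate is $\vp(y) = \sqrt{y}$, whose generator equals $\frac{\sqrt{y}}{2}\l(\kappa(x) - \gamma + \tfrac{3}{4}\sigma^2(y)\r)$ by the computation of Proposition \ref{prop:R1_R2} and is strictly negative on $R_8$, since $x \ge \we x$ forces $\kappa(x) \le \kappa(\we x) = \gamma - \sigma^2(\we y)$.

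\emph{Step 1.} Show that $\tau_{6,1}(x_0, y_0) := \tau_6(x_0, y_0) \wedge \tau_1(x_0, y_0) < +\infty$ almost surely. Apply Theorem \ref{th:reccurent domain} to the supermartingale $\vp$ on a compact exhaustion of $R_8$, using the truncation scheme from the proof of Proposition \ref{prop:R5_to_R7} to handle the boundary $\{x = \we x\}$ where the drift of $\vp$ degenerates. The pathwise limit of $\vp$ on $\{\tau_{6,1} = +\infty\}$ must then be zero, forcing $y_t \to 0$, which is incompatible with $R_8 \subset \{y \ge \we y\}$. Therefore $\tau_{6,1} < +\infty$ a.s.

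\emph{Step 2.} Introduce the interlaced stopping times $(\tau_8^n, \tau_{6,1}^n)_{n \ge 0}$ with $\tau_8^0 = 0$ and
\begin{align*}
  \tau_{6,1}^n &:= \inf\{t \ge \tau_8^{n-1} : (x_t, y_t) \in R_6 \cup R_1\}, \\
  \tau_8^n &:= \inf\{t \ge \tau_{6,1}^n : x_t = \we x \text{ or } (x_t, y_t) \in R_1\}, \quad n \ge 1.
\end{align*}
Proposition \ref{prop:R7_R8} ensures that, from $R_6$, the process returns through $R_7$ to the segment $\{x = \we x,\, y \ge \we y\}$ in finite time a.s. Combined with Step 1, this yields $\tau_8^n, \tau_{6,1}^n < +\infty$ a.s. for every $n$. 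By construction $\{\tau_1(x_0, y_0) = +\infty\} \subset \bigcap_n \{x_{\tau_{6,1}^n} < \we x\}$ and the sets are nested; Bayes' rule and the Markov property, exactly as in Step 2 of Proposition \ref{prop:R4_to_R5}, bound $\Pro{\tau_1 = +\infty}$ by a product of conditional one-step failure probabilities.

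\emph{Step 3.} Consider $\vp(t, y) := \sqrt{y}\exp(\tfrac{1}{8}\sigma^2(\we y)t)$, which is a supermartingale on $R_8$ by Step 1. Optional sampling between $\tau_8^n$ and $\tau_{6,1}^n$, followed by Fatou's lemma and Markov's inequality as in Step 3 of Proposition \ref{prop:R4_to_R5}, shows that $\tau_{6,1}^n - \tau_8^n \to 0$ in probability on $\{\tau_1 = +\infty\}$. Continuity of the trajectories and the Markov property then force $(x_{\tau_8^n}, y_{\tau_8^n}) \to (\we x, \we y)$ on that event, contradicting Remark \ref{rem:out of point}. Hence $\Pro{\tau_1 < +\infty} = 1$. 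The main technical obstacle is Step 1: ensuring that $\sqrt{y}$ is a genuine supermartingale on the interior of $R_8$ and remains one under truncation, since the drift vanishes at the boundary $\{x = \we x\}$; the Bayes iteration in Step 2 and the exponential-supermartingale contradiction in Step 3 are direct transcriptions of the corresponding arguments for Proposition \ref{prop:R4_to_R5}.
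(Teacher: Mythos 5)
Your overall architecture is the right one: the paper's proof is exactly the three-step reflection of Proposition \ref{prop:R4_to_R5} that you describe (recurrence out of the northern strip, interlaced stopping times with Bayes' rule, then an exponential-martingale contradiction). The gap is in your choice of test function. The paper's Step 1 uses $\vp(x)=x^{-2}$, a decreasing function of $x$, precisely because on all of $R_7\cup R_8$ one has $y\ge \we y$ and hence $\Phi(y)-\alpha+\sigma^2(y)\ge 0$: the coordinate whose drift keeps a single sign across the \emph{entire} northern quadrant is $x$, not $y$. Your function $\sqrt{y}$ has generator $\tfrac{\sqrt{y}}{2}\l(\kappa(x)-\gamma+\tfrac{3}{4}\sigma^2(y)\r)$, which is negative only where $x$ is large enough; on $R_7$ one has $x\le\we x$ and, for $x$ small, $\kappa(x)>\gamma$, so $\sqrt{y}$ is a strict \emph{sub}martingale there (this is the part of the cycle where $y$ is being pushed up). Consequently your Step 1 only shows that the process exits $R_8$ in finite time --- and exiting $R_8$ may simply mean crossing $\{x=\we x\}$ into $R_7$ --- not that it exits $R_7\cup R_8$, i.e.\ not that $\tau_6\wedge\tau_1<+\infty$. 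Your Step 2 iteration then does not close: a priori the path could shuttle between $R_7$ and $R_8$ forever without reaching $R_6\cup R_1$, and nothing in your argument excludes this. (Incidentally, the place you flag as delicate, the boundary $\{x=\we x\}$, is not where the drift degenerates: there $\kappa(\we x)-\gamma=-\sigma^2(\we y)$ keeps it strictly negative; the failure occurs in the interior of $R_7$.)

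The same defect propagates to your Step 3: between $\tau_8^n$ and the next hitting time of $R_6\cup R_1$ the path makes excursions into $R_7$, where $\sqrt{y}\,e^{\sigma^2(\we y)t/8}$ is not a supermartingale, so the optional-sampling inequality you invoke is unavailable. The paper instead runs Step 3 with $\vp(t,x)=e^{-mt}x^2$, a submartingale on the whole strip with $m:=\inf\{2(\Phi(y)-\alpha)+3\sigma^2(y)~:~y\in[\we y,1)\}>0$, and reaches the same contradiction you aim for ($\tau_{1,6}^n-\tau_8^n\to 0$ in probability forces convergence to $(\we x,\we y)$, impossible by Remark \ref{rem:out of point}). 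Your Step 2 bookkeeping and the final contradiction are otherwise faithful transcriptions; the fix is to replace $\sqrt{y}$ by a monotone function of $x$ whose generator is one-signed on all of $R_7\cup R_8$.
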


\begin{proof}
We follow Proposition \ref{prop:R4_to_R5}
with the minor following modifications.

{\bf 1}
We consider 
$\tau_{1,6}:=\tau_1(x_0, y_0)\wedge \tau_6(x_0, y_0)$
the exit time of $R_7 \cup R_8$.
The process $\vp_t:=\vp(x_t, y_t)$
with $\vp ~:~(x,y)\in D \mapsto x_t^{-2}$
verifies

$$
\Lc h_t = - 2 h_t \left(\Phi(y_t)-\alpha + \frac{3}{2}\sigma^2(y_t)\right)< -\eps h_t < -\eps h(\we \theta)<0
$$
for some $\eps >0$.
Indeed $\Phi(y)-\alpha+\sigma^2(y)\ge 0$
and is null only if $y=\we y$,
whereas $\sigma^2(y)=0$ only if $y=1$.
Applying Theorem \ref{th:reccurent domain}
to $R_7\cup R_8$,
$\tau_{1,6}< + \infty$ almost surely.

{\it Step 2.}
If $(x_{\tau_{1,6}}, y_{\tau_{1,6}})\in R_6$,
then the process reaches $R_8$
in finite time almost surely according to
Proposition \ref{prop:R7_R8}.
We define the sequence
$( \tau_{1,6}^n, \tau_8^n)_{n\ge 0}$
with $\tau_8^0:=0$ and

$$
\l\{
\begin{array}{ll}
\tau_{1,6}^n :=& \inf\{t \ge \tau_8^{n} ~:~ (x_t, y_t)\in R_6\cup R_1 \}\\
\tau_8^{n+1} :=& \inf\{t>\tau_{1,6}^{n} ~:~ (x_t,y_t) \in (R_7\cap R_8)\cup R_1 \}
\end{array}
\r. , \ \pourtout n\ge 0\;.
$$
Proceeding as in step 2 Proposition \ref{prop:R4_to_R5},
we obtain that $\Pro{\tau_1(x_0, y_0)=+\infty}>0$
implies that

\be 
\label{eq:limite proba}
\lim_n \Pro{x_{\tau_{1,6}^n}<\we x | x_{\tau_8^n} = \we x}=1 \; .
\ee

{\it Step 3.}
Define $m:= \inf\{2(\Phi(y)-\alpha)+3\sigma^2(y) ~:~y \in [\we y, 1) \}$,
which is strictly positive
according to step 1.
Consider the new
process $\vp_t:= \vp(x_t,y_t)$
with $\vp~:~(x,y)\in D \mapsto \exp(-mt)x^2_t$.
It is a positive submartingale on $[0, \tau_{1,6}^0]$,
and similarly to step 3 of Proposition \ref{prop:R4_to_R5},
we can obtain

\begin{align*}
\we x^2
&\le& \Esp{x^2_{\tau_{1,6}^n} e^{-m(\tau_{1,6}^n - \tau_8^n)} | x_{\tau_8^n} = \we x}
 \le \we x^2 \Esp{e^{-m(\tau_{1,6}^n - \tau_8^n)}\ind{x_{\tau_{1,6}^n}<\we x} | x_{\tau_8^n} = \we x} \\
& & + \we \theta^{-2}\Esp{e^{-m(\tau_{1,6}^n - \tau_8^n)}\ind{x_{\tau_{1,6}^n}\ge \we x} | x_{\tau_8^n} = \we x}\; .
\end{align*}
Assuming \eqref{eq:limite proba},
we have

\begin{align*}
0\le \Esp{(1-e^{-m(\tau_{1,6}^n - \tau_8^n)})\ind{x_{\tau_{1,6}^n}<\we x} |x_{\tau_8^n} = \we x }
\le (1/\we \lambda - 1) (1- \Pro{x_{\tau_{1,6}^n}<\we x | x_{\tau_8^n} = \we x}) \xrightarrow{n} 0\; .
\end{align*}
We then proceed exactly as in
step 3 of Proposition \ref{prop:R4_to_R5} to finish the proof.
 \end{proof}

\section{Example}
\label{sec:example}

In this section we assume that
investment follows Say's law
and Philips curve 
is provided by \cite{grasselli2012, keen1995}.

\begin{Assumption}
 \label{ass:form of functions}
 We let
 $\kappa ~:~x\in \R_+ \mapsto (1-x)/\nu$ 
 and 
 $\Phi~:~y\in [0,1)\mapsto \Phi(y):= \frac{\phi_1}{(1-y)^2}+\phi_0 $.
\end{Assumption}

Assumption \ref{ass:non-linear goodwin} holds under Assumption \ref{ass:form of functions}.
The unique non-hyperbolic equlibrium point in $D= \R^*_+\x (0,1)$ is given by
$
(\wt x, \wt y) = \left(1-\nu \gamma, 1 - \sqrt{\phi_1/(\alpha - \phi_0)}\right) \; .
$
Functions of Definition \ref{def:Lyapunov} 
are given by $V(x,y)=V_1(x)+V_2(y)$ with

\begin{equation}
\label{eq:V_1}
\begin{array}{lll}
V_1(x) &=& 
\frac{1}{\nu}\l(x - \wt x \l( \log\l(\frac{x}{\wt x}\r)+1 \r) \r)\;, 
\\
V_2(y)&=&
\phi_1 \l( \log \l(\frac{1-\wt y}{1-y}\r)
+\l(\frac{\wt y}{1-\wt y}\r)
\log \l(\frac{\wt y}{y}\r)
+\frac{1}{y - y^2}- \frac{1}{\wt y - \wt y^2} \r)\; .
\end{array}
\end{equation}
Although
period $T$ given by 
Theorem \ref{th:periods_goodwin}
is not explicit here,
numerical computations allow
to approximate it with a linear
function of $V_0$, see first part of Fig. \ref{fig:linear_rel}.
Following
Remark \ref{rem:period},
$T$ does not converge to $0$
with solutions of \eqref{eq:goodwin_model} 
concentrating to $(\wt x, \wt y)$.
A local phase portrait with values of $T$
is provided in second part of Fig. \ref{fig:linear_rel}.
\begin{figure}[!ht]
\centering
\includegraphics[width= \textwidth, height=11cm]{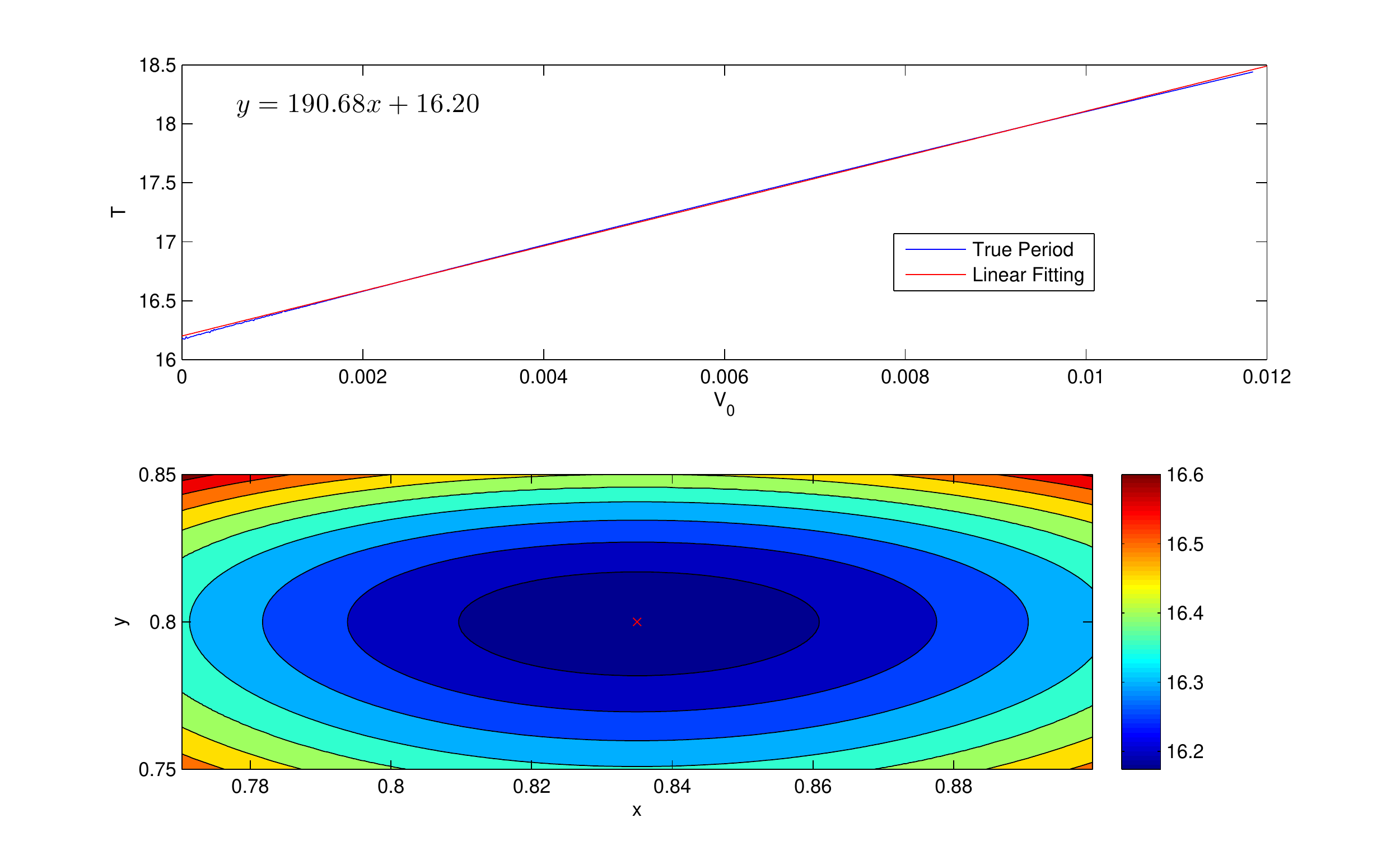}
\caption{Up: values of $T$ as a function of $V$.
Down: Contour lines with values of $T$ in a subset of $D$.
Parameters set at
$(\alpha, \gamma, \nu, \phi_0, \phi_1)
=(0.025, 0.055, 3, 0.040064,0.000064)$. 
Equilibrium point at
$(\wt x, \wt y) = (0.8350, 0.80)$.}
\label{fig:linear_rel}
\end{figure}

\begin{Assumption}
 \label{ass:form of vol}
 Let $\sigma~:~y\in [0,1] \mapsto \sigma_0 (1-y)$ with $\sigma_0>0$.
\end{Assumption}

If we implicitly assume that the
perturbation of the average growth rate $\alpha$ of the productivity
is due to the flow of workers coming in and out of
the fraction $y_t$ employed at time $t$,
Assumption \ref{ass:form of vol} conveniently expresses
that this perturbation decreases with the employment rate
since higher employment implies lower perturbation on
the constant average rate $\alpha_t$.
Other models can of course be considered.

Assumption \ref{ass:form of vol}
together with Assumption \ref{ass:form of functions},
and comparing with \eqref{eq:V_1},
satisfy Assumption \ref{ass:growth conditions}.
Indeed for all $y\in (0,1)$,

\begin{equation}
\label{eq:estimate_y}
\sigma^2(y)\Phi'(y) = \frac{2\sigma_0^2 \phi_1}{(1-y)}
 \le 2 \sigma_0^2 \left(V_2(y) -\phi_1\l(\frac{1}{1-\wt y}\l(\wt y \log(\wt y)-\frac{1}{\wt y} \r)+\log(1-\wt y) \r) \right)
\end{equation}
and along with the sub-linearity of the log function,

\begin{equation}
\label{eq:estimate_x}
-\kappa(x)-x\kappa'(x)=\frac{2x-1}{\nu} \le \frac{2}{1-\wt x}\l(V_1(x) +\wt x - \wt x \log(\wt x)  \r)\;.
\end{equation}
In line with
Assumptions 
\ref{ass:non-linear goodwin} and \ref{ass:growth conditions}
the vertical asymptote at $y=1$
implies that
$
\sigma^2(y)\Phi(y)\le K_0  V_2 (y) +k_0
$
for some $K_0,k_0\in \R_+^2$.
Under Assumption \ref{ass:growth conditions} 
and following \eqref{eq:estimate_y} and \eqref{eq:estimate_x}, 
$K_0=0$ and $k_0=\sigma_0^2 (\phi_1+\phi_0)$.

Assumption \ref{ass:form of vol} also
implies that $(1-\we y)^2$ is the root of a quadratic polynomial
$
	\sigma_0^2 (1-\we y)^4 - (\alpha+\phi_0) (1-\we y)^2 + \phi_1 = 0 \;.
$
The latter shall have a unique root in $(0,1)$
to satisfy Assumption \ref{ass:unique_root}.
The following example of condition is sufficient.
\begin{Assumption}
 \label{ass:unique root}
 We assume $\phi_1 \le (\alpha + \phi_0)/2$ and
$\sigma_0 \le \max \{(\alpha+ \phi_0)/(2\sqrt{\phi_1}), (\alpha + \phi_0- \phi_1) \}$.
\end{Assumption}

We are now
able to claim
the existence of
$K,k\in \R^2_+$ such that
$|R(V_0,\rho)| \le K(V_0+\rho)+ k $,
where $R$ is defined in Proposition \ref{prop:estimate}.
A direct application provides

$$
	R(V_0, \rho) := \max\limits_{D(V_0,\rho)} \l\{
	\sigma_0^2(1-y)^2\l( \frac{2x-\wt x}{\nu} + \frac{2 \phi_1 y}{(1-y)^3}
	 + \frac{\phi_1}{(1-y)^2}-\frac{\phi_1}{(1-\wt y)^2}
	\r)\r\}.
$$
Using \eqref{eq:estimate_x},
this estimate becomes
$
|R(V_0, \rho) |\le K (V_0+\rho) + k$
where $K:=(2\sigma_0^2)/(1-\wt x)$ 
and $k$ is an 
explicitly calculable constant.
Following the same procedure with \eqref{eq:estimate_y},
$I(V_0, \rho)\le K (V_0+\rho)^2+k'$
with the same $K$ and $k'\ne k$.
Now choosing 
$\mu=(\rho-(K(V_0+\rho)+k)\theta/2)/ \sqrt{\theta}$
for some $\theta\ge 0$,
so that $\Theta(\rho) = \theta$,
Proposition \ref{prop:estimate} provides

$$
	\Pro{\tau_\rho > \theta} 
	\ge \l(1-\frac{(K (V_0+\rho)^2 + k') \theta}
	{\l(  \demi(K(V_0+\rho)+k)\theta - \rho\r)^2}\r)\; .
$$

Theorem \ref{th:finite period}
is a straightly observable phenomenon
with simulations, see Fig. \ref{fig:panel_goodwin_stochastic_examples}.
Under the assumptions of this section,
the system has been simulated
using XPPAUT with a 
fourth order Runge-Kutta scheme for 
the deterministic part,
and an Euler scheme for the Brownian part.
Fig. \ref{fig:panel_goodwin_stochastic_examples}
illustrates the effect of the 
volatility level $\sigma_0$
on trajectories of the system,
as for the economic quantity $P_t:=a_t y_t N_t$.
\begin{figure}[!ht]
\centering
\includegraphics[width=0.9\textwidth, height=11cm]{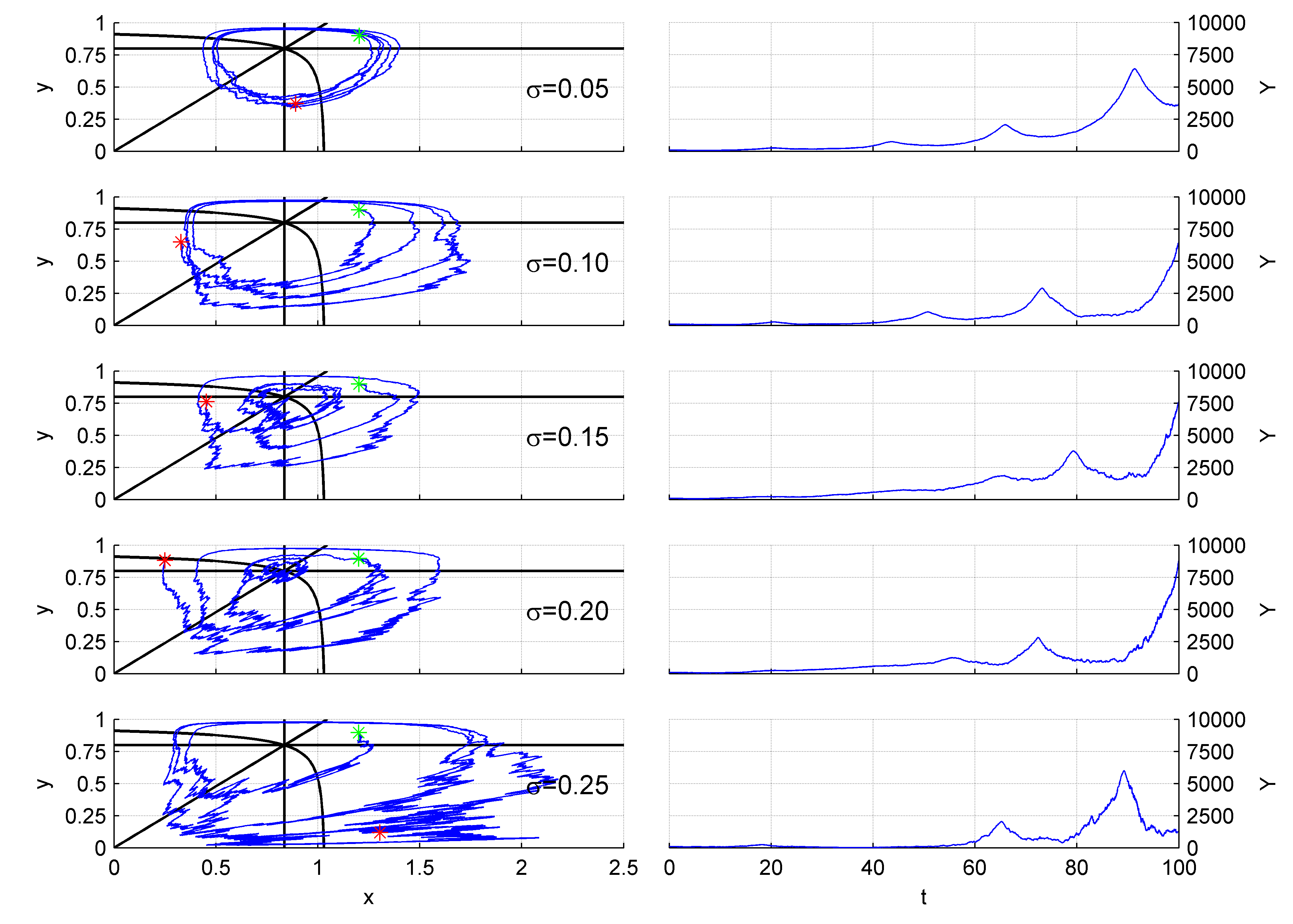}
\caption{Left column : phase diagram $(x,y)$
of subsample paths of trajectories for \eqref{eq:stochastic_goodwin_1}
with different values of volatility $\sigma_0$,
starting from the green star and stopping at the red start.
Right column: evolution of output $P_t$ over time for the subsample path.}
\label{fig:panel_goodwin_stochastic_examples}
\end{figure}

Apart from specific subregions of $D$
as $R_1$ or $R_5$ where
Corollary 3.2 in \cite{khasminskii} can provide
an estimate for the expectation of 
the exit time, 
a bound for
the expected period $\Esp{S}$
seems out of reach.
Numerical simulations have nevertheless
always provide reasonnable finite
periods of stochastic orbits of \eqref{eq:stochastic_goodwin_1}.
We thus expect that $\Esp{S}$ is finite for
a wide range of values of $(x_0, y_0)\in D$.
Let us start with $(x_0,y_0)\in R_1\cap R_8$
and reformulate
$S$ of Definition \ref{def:stochastic orbit}
as the time the process crosses the line $y = \wt\theta x$
for the second time.
This is equivalent to take $(x_0,y_0)\in R_4\cap R_5$.
Resorting to numerical methods, 
we have simulated the system $2000$ times for $100$ different starting points 
in $R_1\cap R_8$
and recorded the position at 
the time when this line is crossed the second time, 
that is the positions after a full loop. 
Fig. \ref{fig:goodwin_stochastic_expected_loop} 
contains such examination for an array of values of $\sigma_0$.
The expected time $\Esp{S}$ 
to complete a full-loop 
is also illustrated. 
As observed, 
there seems to be a stable 
attractive fixed point to
$y_0 \mapsto \Esp{y_S}$ for sufficiently large values of $\sigma_0$.
If the starting point is picked 
too close to $(\wt x, \wt y)$, 
the expected crossing value after one loop 
is further away from it. 
On the other hand, 
if the one starts extremely far away 
from $(\wt x, \wt y)$, 
say with $y_0<0.25$, 
then the expected value after on loop is higher. 
This implies that 
after many loops,
the expectation converges, and so
does $\Esp{S}$ with the number of loops
around $(\we x, \we y)$.
Assuming that $\Esp{S}<+\infty$ for enough initial points,
Theorem \ref{th:reccurent domain} can be used
with $V$ 
at points $(0,0)$ and $(\we x, \we y)$
to prove the following conjecture.
\begin{Conjecture}
\label{prop:fixed_point}
Consider the function $\Sc~:~ y\in (0,\we y)\mapsto \Esp{y_S}\in (0,\we y)$ such that
$(x_t,y_t)$ is a solution to \eqref{eq:stochastic_goodwin_1}
with $(x_0, y_0) = (y/\we \theta, y)$, 
and $S$ is the finite stopping time defined by Theorem \ref{th:finite period}.
Then $\Sc$ has at least one fixed point in $(0,\we y)$.
\end{Conjecture}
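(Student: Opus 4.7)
The plan is to apply the intermediate value theorem to the function $g(y):= \Sc(y) - y$ on $(0,\we y)$, showing that $g>0$ near the left endpoint and $g<0$ near the right endpoint. Concretely, I would proceed in four steps.

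First, I would establish that $\Sc$ is well defined and takes values in $(0,\we y)$. By Theorem \ref{th:finite period}, $S$ is a.s. finite; by Theorem \ref{prop:existence} the solution stays in $D$, so $y_S\in(0,1)$ a.s.; and by construction of $S$ (the second crossing of the line $y=\we\theta x$ starting from the lower half-plane $\{y<\we y\}$), we have $y_S<\we y$ a.s. Boundedness of $y_S$ in $(0,1)$ gives $\Sc(y)\in[0,\we y]$, and the strict inequalities follow because the exit distribution cannot be a Dirac mass at the boundary. Second, I would prove continuity of $\Sc$ on $(0,\we y)$ via continuous dependence of solutions to \eqref{eq:stochastic_goodwin_1} on the initial datum (on each finite horizon, using the local Lipschitz property guaranteed by Assumption \ref{ass:non-linear goodwin}) combined with the strong Markov property, a.s. continuity of $S$ as a functional of the path (the crossings of the line $y=\we\theta x$ are transversal a.s. because the deterministic drift of $\theta_t$ given in Definition \ref{def:theta} is non-vanishing away from $\{y=f(x)\}$), and bounded convergence.

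Third, I would analyse the two boundary regimes via the Lyapunov function $V$ of Definition \ref{def:Lyapunov}, following the authors' own hint. Near $y=0$: the starting point $(y/\we\theta,y)$ approaches $(0,0)$, and \eqref{eq:limits of V boundary} yields $V(x,y)\to+\infty$ there; combining $\Lc V\le KV+k$ (established inside the proof of Theorem \ref{prop:existence}) with Theorem \ref{th:reccurent domain} applied to a suitable cutoff of $V$, the process exits any small neighborhood of $(0,0)$ in finite time with a controlled displacement, so that $\liminf_{y\to 0^+}\Sc(y)\ge\delta_0>0$. Near $y=\we y$: by Remark \ref{rem:basic properties}, $\Lc V(\we x,\we y)>0$ and continuity gives a neighborhood $U$ of $(\we x,\we y)$ on which $\Lc V\ge\eta>0$; Theorem \ref{th:reccurent domain} then forces the process to leave $U$ in finite time, so that $V(x_S,y_S)$ is bounded below by a strictly positive constant, which translates (through the level sets of $V_2$) to $y_S\le\we y-\delta_1$ a.s. for the starting point close enough to $(\we x,\we y)$. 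This yields $\limsup_{y\to\we y^-}\Sc(y)\le\we y-\delta_1$.

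Fourth, combining the two boundary estimates with continuity: $g(y)=\Sc(y)-y>\delta_0-y>0$ for $y$ small, while $g(y)\le\we y-\delta_1-y<0$ for $y$ close enough to $\we y$. The IVT then produces a zero of $g$ in $(0,\we y)$, i.e., a fixed point of $\Sc$. The principal obstacle I expect is Step 2: the diffusion coefficient of \eqref{eq:stochastic_goodwin_1} is degenerate (both components are driven by the same $dW_t$, so the covariance matrix has rank one), which prevents direct appeal to standard smoothness-of-hitting-time results for non-degenerate diffusions; one must instead exploit the non-trivial drift of the auxiliary scalar $\theta_t$ (which carries no Brownian noise) to argue transversality of the crossing. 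A secondary difficulty is Step 3(b), where the diffusion near $(\we x,\we y)$ has vanishing deterministic drift, and one must quantify the escape from $U$ carefully enough to extract a displacement in the $y$-coordinate rather than merely in $V$-value.
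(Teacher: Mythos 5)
First, be aware that the paper does \emph{not} prove this statement: it is stated explicitly as a Conjecture, supported only by Monte Carlo evidence (Fig.~\ref{fig:goodwin_stochastic_expected_loop}) and a one-sentence heuristic (``Assuming that $\Esp{S}<+\infty$ for enough initial points, Theorem \ref{th:reccurent domain} can be used with $V$ at points $(0,0)$ and $(\we x, \we y)$\dots''). Your IVT-plus-Lyapunov roadmap is exactly the strategy the authors gesture at, so there is no methodological divergence to report; the question is whether your sketch actually closes the gap, and it does not.

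Two steps are genuinely incomplete. Step 2 (continuity of $\Sc$) is asserted rather than proved: for the degenerate, rank-one-noise system \eqref{eq:stochastic_goodwin_1}, continuity in the initial condition of the \emph{second winding crossing} of the line $y=\we\theta x$ is not a consequence of pathwise continuous dependence on a fixed finite horizon, because $S$ is not bounded uniformly in the initial point and no moment bound on $S$ is available (the authors explicitly say a bound on $\Esp{S}$ ``seems out of reach''); your transversality remark about $\theta_t$ is a good ingredient but does not by itself give a.s.\ continuity of $S$ or uniform integrability across nearby starting points. Step 3 contains a non sequitur in both regimes: Theorem \ref{th:reccurent domain} (applied to $C-V$ near $(\we x,\we y)$, or to $V$ near $(0,0)$) only forces the process to \emph{exit a neighborhood in finite time}; it says nothing about where the process sits at the much later time $S$, after completing the remaining three quadrants of the loop. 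In particular, a lower bound on $V(x_t,y_t)$ at the exit of $U$ neither persists to time $S$ (since $\Lc V$ changes sign on $D$, see \eqref{eq:dynkin to V}) nor, even if it did, would it imply $y_S\le \we y-\delta_1$: the level sets of $V$ are centered at $(\wt x,\wt y)$, not at $(\we x,\we y)$, and $V(\we x,\we y)>0$, so bounding $V$ below does not push the crossing point away from $\we y$ unless the bound exceeds $V(\we x,\we y)$, which you have not shown. Until the boundary displacement is tracked all the way to the stopping time $S$ and the continuity of $\Sc$ is established, the IVT cannot be invoked, and the statement remains, as in the paper, a conjecture.
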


\begin{figure}[!ht]
\includegraphics[width=\textwidth, height=11cm]{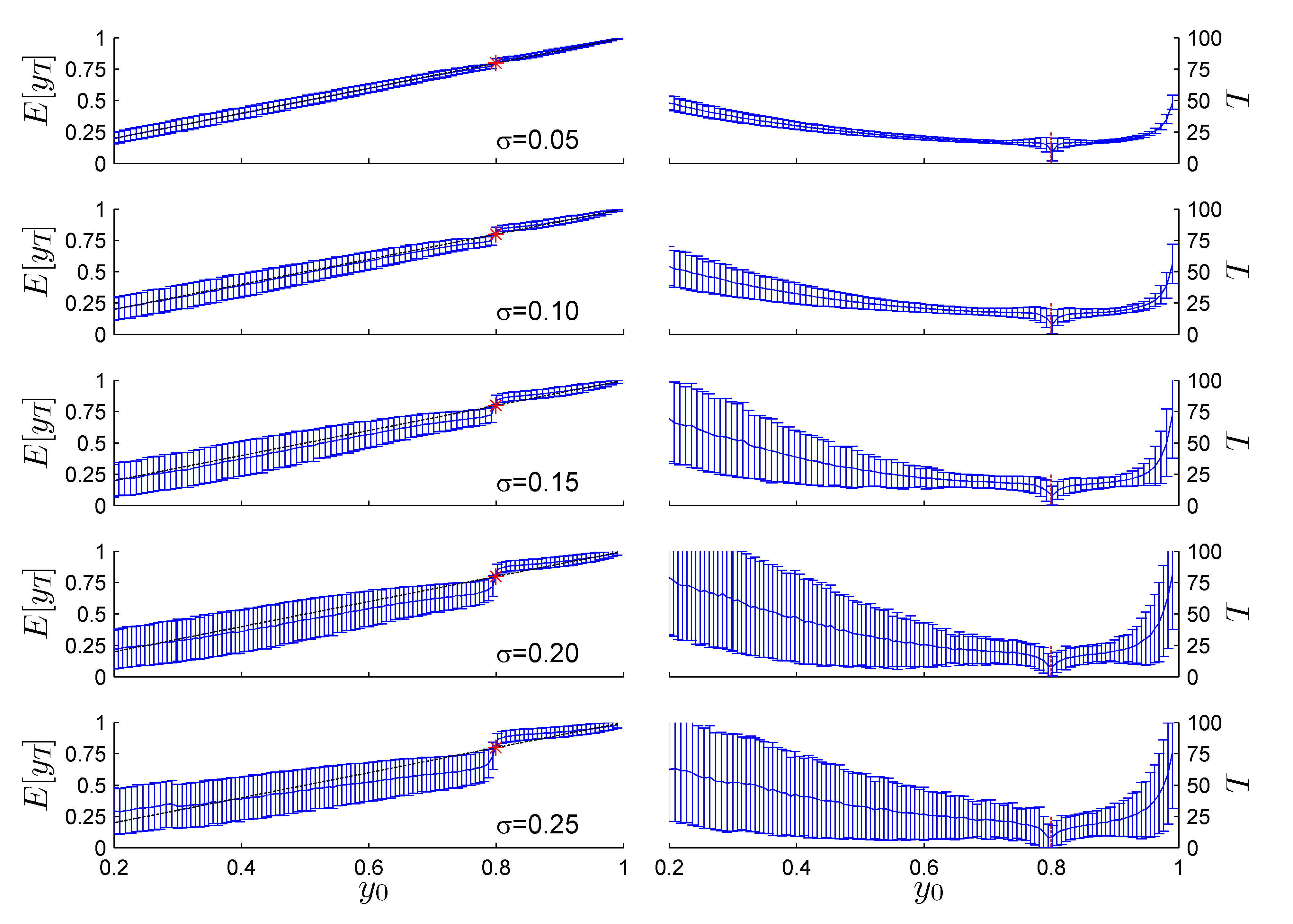}
\caption{Expected values of employment $y$
after one full loop $y_T$ (left), and expected elapsed time $T$ (right). 
Computation performed in MATLAB,
with $2000$ simulations for 
every value single one of 
the $100$ initial values taken along 
the line $y = \wt\theta x$.}
\label{fig:goodwin_stochastic_expected_loop}
\end{figure}

\section{Concluding remarks}

This contribution attempts to
draw the attention of dynamical system analysis
onto macroeconomic models.
Before looking into complex models
of finance and crises, e.g. \cite{costa2014, grasselli2012, keen1995},
we focus here on a Brownian perturbation added into
a non-linear version of the Lotka-Volterra system
used in Economics, the Goodwin model.
To begin with,
we recall 
the usual results for the deterministic
planar oscillator: we provide 
the constant Entropy function and
describe the period of 
the closed orbits drawned by the system.
We then provide
sufficient conditions for the 
stochastically perturbed system
to stay in the meaningful domain
$D$ which is a a bounded subset of
$\R^2_+$ for the $y$-component.
The entropy function is actually of
great use for the last result, 
additionally to prior estimates on
variations of the system.

We finally prove what seems a
fundamental and staightforward property of the system,
namely that a solution $(x_t, y_t)$ rotates
with perturbations around a unique point $(\we x, \we y)$.
The definition of stochastic orbits provided here
conventienly suits the intuition of
how the deterministic concept can be extended.
However it has clearly not
the ambition to be a definitive concept and further
investigations might confirm its usefulness
or its precarity.
The proof exploits the concept of reccurent domains
in an intensive manner.

We expect that economists
seek interest in \eqref{eq:stochastic_goodwin_1},
as other perturbed macroeconomic systems (e.g. \cite{kiernan1989, neamtu2012}),
for the possibility to adjust the model
to observed past data (e.g. \cite{arato2003} and \cite{harvie2000, mohun2006}) and find a possible
synthetic explanation for perturbations of business cycles (see \cite{evans1992, hansen1985}).

\section*{Acknowledgment}
Both authors want to thank
Matheus Grasselli for
leading ideas and presentation
suggestion.
Remaining errors are authors responsibility.



\end{document}